\newcolumntype{L}{>{\arraybackslash}X}
\theoremstyle{plain}
\newtheorem{theorem}{Theorem}[section]
\theoremstyle{remark}
\newtheorem{remark}[theorem]{Remark}
\theoremstyle{plain}
\newtheorem{corollary}[theorem]{Corollary}
\newtheorem{lemma}[theorem]{Lemma}
\newtheorem{proposition}[theorem]{Proposition}
\newtheorem{definition}[theorem]{Definition}
\newtheorem{assumption}[theorem]{Assumption}
\numberwithin{equation}{section}
\let\div\relax
\DeclareMathOperator{\div}{div}
\def\N{{\mathbb N}}
\def\R{{\mathbb R}}
\renewcommand{\P}{\mathbf{P}}
\newcommand{\E}{\mathbf{E}}
\newcommand{\F}{\mathscr{F}}
\newcommand{\p}{\mathbb{P}}
\newcommand{\T}{\mathbb{T}}
\newcommand{\calL}{\mathscr{L}}
\newcommand{\Ls}{\mathbb{L}}
\newcommand{\Hs}{\mathbb{H}}
\newcommand{\D}{\mathcal{D}}
\newcommand{\eps}{\varepsilon}
\renewcommand{\O}{\Omega}
\renewcommand{\o}{\mathcal{O}}
\newcommand{\wt}{\widetilde}
\newcommand{\wh}{\widehat}
\newcommand{\dd}{\mathrm{d}}
\newcommand{\Do}{\mathsf{D}}
\newcommand{\Dom}{\mathcal{O}}
\newcommand{\uin}{u_{\mathrm{in}}}
\newcommand{\ov}{\overline{v}}
\newcommand{\ophi}{\overline{\phi}}
\newcommand{\hurst}{\mathcal{H}}
\newcommand{\qstar}{q_{\hurst}}
\renewcommand{\tilde}{\wt}
\renewcommand{\hat}{\wh}
\newcommand{\embed}{\hookrightarrow}
\newenvironment{acknowledgements}{%
  \begin{abstract}
}{%
  \end{abstract}
}
\title[2D Navier-Stokes with Dirichlet boundary noise]{Global well-posedness of 2D Navier-Stokes with Dirichlet boundary fractional noise}
\author[A. Agresti]{Antonio Agresti}
\address{Delft University of Technology\\
Delft Institute of Applied Mathematics
 \\ P.O. Box 5031\\ 2600 GA Delft\\The
Netherlands
\newline
Current Address: Department of Mathematics Guido Castelnuovo, Sapienza University of Rome, P.le Aldo Moro 5, 00185 Rome, Italy} 
\email{antonio.agresti92@gmail.com}
\author[A. Blessing]{Alexandra Blessing (Neam\c tu)}
\address{University of Konstanz, Department of Mathematics and Statistics, Universit\"atsstraße 10, 78464 Konstanz, Germany} 
\email{alexandra.blessing@uni-konstanz.de}
\author[E. Luongo]{Eliseo Luongo}
\address{Scuola Normale Superiore, Piazza dei Cavalieri, 7, 56126 Pisa, Italia \newline
Current Address: Fakultät für Mathematik, Universität Bielefeld, 33501 Bielefeld, Germany} 
\email{eliseo.luongo@sns.it}
\thanks{A.A. has received funding from the VICI subsidy VI.C.212.027 of the Netherlands Organisation for Scientific Research (NWO). A.A. and E.L. are members of GNAMPA (IN$\delta$AM)}
\thanks{A.B. acknowledges support from DFG CRC/TRR 388, project A06.}
\keywords{Navier-Stokes Equations, Stochastic Boundary Conditions, Maximal Regularity, Fractional Brownian Motion, Dirichlet Boundary Conditions, Infinite Energy Solutions, Couette Flow}
\subjclass{60H15, 60H30, 76D03 (47A60, 35J25)}
\date\today
\begin{document}

\begin{abstract}
In this paper, we prove the global well-posedness and interior regularity for the 2D Navier-Stokes equations driven by a fractional noise acting as an inhomogeneous Dirichlet-type boundary condition.
The model describes a vertical slice of the ocean with a relative motion between the two surfaces and can be thought of as a stochastic variant of the Couette flow. The relative motion of the surfaces is modeled by a Gaussian noise which is coloured in space and fractional in time with Hurst parameter $\hurst>\frac{3}{4}$.
\end{abstract}

\maketitle

\tableofcontents

\section{Introduction}\label{s:intro}
In many situations occurring in applied sciences, noise can affect the evolution of a system only through the boundary of a region where the system evolves. Such phenomena can be modeled via partial differential equations with boundary noise, as introduced by Da Prato and Zabczyck in the seminal paper \cite{da1993evolution}. Such a description presents several issues from a mathematical viewpoint. Indeed, nowadays it is well-known that in the one dimensional case, the solution of the heat equation with white noise Dirichlet or Neumann boundary conditions has low (space) regularity compared to the case of noise diffused inside the domain. This is due to the large amplitude of the fluctuations of the solutions close to the boundary. In particular, in the case of Dirichlet boundary conditions the solution is only a distribution. This allowed to treat only a restricted class of nonlinearities, exploiting specific properties of the heat semigroup and studying carefully the blow-up of the solution close to the boundary. For some results in this direction, the reader is referred to \cite{alos2002stochastic,brzezniak2015second,fabbri2009lq,Goldys23}.
On the contrary, partial differential equations with white noise Neumann boundary conditions with more severe nonlinearities have been considered in \cite{Cerrai,sowers1994multidimensional}, and in the last few years, maximal $L^p$ regularity techniques provided new ideas to treat some of those arising in fluid dynamics. Indeed, some results on the global and local well-posedness of the 2D Navier-Stokes equations and the 3D primitive equations with boundary noise perturbations of Neumann type have been proven in \cite{AL23_boundary_noise} and \cite{Binz_BN}, respectively. Besides the physical interests in studying the Navier-Stokes equations with boundary noise in virtue of its connection with the Couette flow (see also below for further motivations), the present manuscript also aims at (partially) filling the gap in the literature between Dirichlet and Neumann type boundary conditions for fluid dynamical models.

\smallskip

Throughout the manuscript, we fix a finite time horizon $T>0$ and consider a spatial domain $\Dom=\mathbb{T}\times(0,a)$ where $\mathbb{T}$ is the one-dimensional torus and $a>0$. We further define the lower and upper parts of the boundary of $\Dom$ by
\begin{equation}
\Gamma_b=\mathbb{T}\times\{0\}\quad\text{ and }\quad\Gamma_u=\mathbb{T}\times\{a\}.
\end{equation} 
In this work, we focus on the global well-posedness and interior regularity of the two-dimensional Navier–Stokes equations with fractional boundary noise. The unknowns are the velocity field $u(t,\omega,x,z)=(u_1,u_2):(0,T)\times \Omega\times \Dom \to \R^2$ and the pressure $P:(0,T)\times \O\times \Dom\to \R$, which formally satisfy the system
\begin{equation}
\label{eq:NS_fractional_noise}
\left\{
\begin{aligned}
\partial_t u 
&=\Delta u+\nabla P -(u\cdot\nabla) u   \quad &\text{ on }&(0,T)\times\Dom,\\
\operatorname{div}u &=0 \quad &\text{ on }&(0,T)\times\Dom,\\
u_1 &=g\,\dot{W}^\hurst\quad &\text{ on }&(0,T)\times\Gamma_u,\\
u_2 &=0\quad &\text{ on }&(0,T)\times\Gamma_u,\\
u&=0\quad &\text{ on }&(0,T)\times\Gamma_b,\\
u(0)&=\uin\quad &\text{ on }&\Dom,
\end{aligned}
\right.
\end{equation}
where $(\uin,g)$ are given data and $W^\hurst$ is a fractional Brownian motion with Hurst parameter $\hurst>\frac{3}{4}$, respectively. The assumptions on $(\uin,g,W^\hurst)$ are made precise below. Even if we consider a more regular noise in time than the one introduced in \cite{da1993evolution}, the combination of the blow-up of the solution close to the boundary and the Navier-Stokes nonlinearity makes the global well-posedness and the interior regularity of \eqref{eq:NS_fractional_noise} a non-trivial issue, which, indeed, cannot be treated simply by the techniques introduced in \cite{AL23_boundary_noise}. Indeed, to the best of our knowledge, this is the first instance of a \emph{global} well-posedness result for a fluid dynamical system with non-homogeneous Dirichlet-type boundary conditions of a regularity class comparable with the time derivative of a fractional Brownian motion with Hurst parameter $\hurst> \frac{3}{4}$, see \cite{chang2015solvability,farwig2011global,grubb2001nonhomogeneous} and the references therein for some results in this direction. Moreover, the reader is referred to \cite{Wai-Tong21} for the analysis of some properties of \eqref{eq:NS_fractional_noise} in the 3D case replacing $g\,\dot{W}^\hurst$ with an Ornstein Uhlenbeck process and to \cite{alos2002stochastic,Goldys23} for some results on the existence and uniqueness of solutions for the heat equation with white noise Dirichlet type boundary conditions perturbed by some Lipschitz forcing. Finally, in \cite{berselli2006existence,raymond2007stokes} the emphasis is on the non-penetration boundary conditions, namely it is studied the case $u_2=g(x,t) $ on $\Gamma_u\cup \Gamma_b$, with $g$ much more regular either in time and space than $g\,\dot{W}^\hurst$. 

According to \cite{gill1982atmosphere,pedlosky1996ocean, pedlosky2013geophysical}, see also the discussion in the introduction of \cite{AL23_boundary_noise}, the geometry considered in \eqref{eq:NS_fractional_noise} can be seen as an idealization of the ocean dynamics (more precisely, a vertical slice of the ocean). The model \eqref{eq:NS_fractional_noise}, describes a Couette flow, namely a viscous fluid in the space between two surfaces, one of which is moving tangentially relative to the other (see also \Cref{rem:extension}). The relative motion of the surfaces imposes a shear stress on the fluid and induces the flow. Let us recall that the onset of turbulence is often related to the randomness of background movement \cite{mikulevicius2004stochastic}. Moreover, according to \cite[Chapter 3]{pope2001turbulent} in any turbulent flow there are unavoidably perturbations in boundary conditions and material properties. We model these features by the noise term $g\,\dot{W}^\hurst$. As introduced by Kolmogorov in~\cite{Kolmogorov}, fractional Brownian motion can be thought of as a model for turbulence. 
Moreover, to describe turbulence in 3D fluids, models of random vortex filaments have been introduced in~\cite{FlandoliGubinelli}. These have been analyzed for fractional Brownian motion with $\hurst>1/2$ in~\cite{NualartVortex} and $\hurst<1/2$ in~\cite{FlandoliFBM}.

\subsection{Main result}\label{sec:main results}
We begin by introducing some basic notation. 
Throughout this manuscript, we work on a complete filtered probability space $(\Omega,\mathcal{F},(\mathcal{F}_t)_{t\geq0},\mathbf{P})$ and consider a separable Hilbert space $U$. A process 
$\Phi$ is said to be $\mathcal{F}$-progressive measurable if, for every $t>0$, the restriction  $\Phi|_{(0,t)\times \Omega}$ is measurable with respect to $\mathcal{F}_t\otimes \mathcal{B}((0,t))$, where $\mathcal{B}$ denotes the Borel $\sigma$-algebra. Further notation concerning function spaces is deferred to \Cref{sss:notation}. On the noise $W^\hurst$ we enforce the following 
\begin{assumption}
\label{ass:ass_fractional}
$W^\hurst$ is a $U$-cylindrical fractional Brownian motion with Hurst parameter $\hurst\in (\frac{3}{4},1)$
and $
g\in \calL_2 (U,H^{-s}(\Gamma_u))$ with $ s\in [0,\frac{1}{2})$ and  $\hurst-\frac{s}{2}>\frac{3}{4}$.
\end{assumption}
Note that \Cref{ass:ass_fractional} is consistent with the results obtained in~\cite{Maslowski} for the stochastic heat equation with Dirichlet fractional noise. The reader is referred to \Cref{r:generalization} for the case of a time-dependent $g$. 
\smallskip
Following \cite[Chapter 15]{DPZ_ergodicity} and \cite{da2002two}, {we construct solutions to \eqref{eq:NS_fractional_noise} by the splitting}
\begin{align}\label{split_intro}
    u= w_g + v,
\end{align}
where $w_g$ is a mild solution of the linear problem with non-homogeneous boundary conditions
\begin{equation}
\label{Linear stochastic}
\left\{
\begin{aligned}
\partial_t w_g 
&=\Delta w_g+\nabla P_g    \ \ &\text{ on }&(0,T)\times\Dom,\\
\operatorname{div} w_g &=0 \ \ &\text{ on }&(0,T)\times\Dom,\\
w_{g,1} &=g\,\dot{W}^\hurst \ \ &\text{ on }&(0,T)\times\Gamma_u,\\
{w_{g,2}} &=0\ \ &\text{ on }&(0,T)\times\Gamma_u,\\
w_g&=0\ \ &\text{ on }&(0,T)\times\Gamma_b,\\
w_g(0)&=0\ \ &\text{ on }&\Dom
\end{aligned}
\right.
\end{equation}
and $v$ is a weak solution of
\begin{equation}
\label{eq:modified_intro}
\left\{
\begin{aligned}
\partial_t v& = \Delta v +\nabla(P-P_g)\\
&\quad\quad \ \  -\div ((v+w_g)\otimes (v+w_g))   \ \ &\text{ on }&(0,T)\times\Dom,\\
\operatorname{div}v&=0 \ \ &\text{ on }&(0,T)\times\Dom,\\
v&=0\ \ &\text{ on }&(0,T)\times\left(\Gamma_b\cup\Gamma_u\right),\\
v(0)&=\uin\ \ &\text{ on }&\Dom.
\end{aligned}
\right.
\end{equation}
In \eqref{eq:modified_intro}, due to the divergence-free of $v$ and $w_g$, we rewrote the Navier-Stokes nonlinearity in the conservative form to accommodate the weak (PDE) setting.

As discussed in \cite[Chapter 13]{DPZ_ergodicity}, if $g$, $\uin$, $W^\hurst(t)$ were sufficiently regular, then $u=v+w_g$ would be a classical solution of the Navier-Stokes equations with non-homogeneous boundary conditions \eqref{eq:NS_fractional_noise}.

Next, we introduce the class of solutions we are going to consider. To motivate them, let us first discuss the regularity of $w_g$. It is well-known that, in the case of Dirichlet-type boundary conditions, the solution of a linear problem with boundary noise and $\hurst=\frac{1}{2}$ is a distribution which blows-up close to the boundary, see \cite{alos2002stochastic,da1993evolution}, and the same holds also in case of $\hurst\neq \frac{1}{2}$, see \cite{brzezniak2015second}. Therefore, we cannot expect that the mild solution of \eqref{Linear stochastic} has arbitrarily good integrability properties as in \cite{AL23_boundary_noise, Binz_BN}. This has drastic consequences in our analysis. As we will show in \Cref{regularity stokes}, we have, $\P-a.s.$,
\begin{equation}
\label{eq:pathwise_regularity_wg_intro}
w_g \in C([0,T];L^{2q}(\Dom;\R^2))\ \text{ for all }q\in (1,\qstar)
\end{equation}
where 
\begin{equation}
\label{eq:def_qstar}
 \qstar:= \frac{2}{2s+5-4\hurst}\in (1,2).
\end{equation}
Let us stress that $\qstar<2$ and $\lim_{\hurst \downarrow 3/4}q_\hurst{=}1$ even if $s=0$. As we will see below, this fact creates major difficulties in our analysis of the auxiliary Navier-Stokes equations \eqref{eq:modified_intro}.
In particular, $w_g\otimes w_g\in C([0,\infty);L^{q}(\Dom;\R^2))$ $\P-a.s.$ and, from parabolic regularity, the best regularity we can hope for is $v\in L^p([0,\infty);H^{1,q}(\Dom;\R^2))$ 
$\P-a.s.$ for all $p<\infty$. Thus, in general,
$$
v\not\in L^2(0,T;H^{1}(\Dom;\R^2))\ \ \P-a.s. \text{ for any }T<\infty.
$$
Therefore, $v$ is a solution of the Navier-Stokes equations with \emph{infinite energy} and the argument used in \cite{AL23_boundary_noise} does not work.
The case of infinite energy solutions of 2D Navier-Stokes equations already appeared in the literature \cite{BF09_rough_force,GP02_infinite_energy}. In \cite{GP02_infinite_energy} the unboundedness of the energy is due to a rough initial data $u_0\not\in L^2$ while in \cite{BF09_rough_force} to a rough forcing term $f\not\in L^2(0,T;H^{-1})$ acting on the bulk. Our case does not 
fit in any of the above situations due to the presence of transport-type terms depending on the $w_g$ in \eqref{eq:modified_intro} and the fact that we are working on domains. For this reason, our proofs rely on different methods. For details, the reader is referred to the text before \Cref{r:generalization}.

\smallskip

In light of the previous discussion, we are now ready to define solutions to \eqref{eq:NS_fractional_noise}. 
Below, we set $A:B=\sum_{i,j=1}^2 A^{i,j}B^{i,j}$ for two matrices $A$ and $B$ and $\mathbb{L}^q$ the image of $L^q(\Dom;\R^2)$ via the Leray projection $\mathbb{P}$ defined rigorously in \Cref{subsec:stokes operator }.
\begin{definition}\label{def z weak sol}
Let $T<\infty$, $\uin\in L^0_{\F_0}(\O;\Ls^2)$ and $q\in (1,\qstar)$.
\begin{itemize}
    \item {\rm ($q$-solution)} A progressively measurable process $u$ with $\mathbf{P}-a.s.$ paths in 
$L^{2q'}(0,T;\Ls^{2q})$, is a pathwise weak $q$-solution of \eqref{eq:NS_fractional_noise} if for all divergence-free $\varphi=(\varphi_1,\varphi_2)\in C^{\infty}(\Dom;\R^2)$ such that 
$\varphi=0$ on $\Gamma_{b}\cup \Gamma_u
$ and a.e.\ $t\in (0,T)$,
\begin{align*}
&\int_{\Dom} u(x,t)\varphi(x)\,\dd x 
-
\int_{\Dom} \uin(x)\varphi(x)\,\dd x \\
&\qquad\qquad 
=
\int_0^t \int_{\Dom}\big(u\cdot \Delta \varphi + [u\otimes u]:\nabla \varphi\big)\,\dd x\, \dd r-{\langle g,\partial_2\varphi_1\rangle_{H^{-s}(\Gamma_u),H^{s}(\Gamma_u)}W^\hurst_t }.
\end{align*}
\item {\rm (unique $q$-solution)} 
A $q$-solution $u$ to \eqref{eq:NS_fractional_noise} is said to be a \emph{unique solution} if for any other $q$-solution $\widetilde{u}$ we have $u=\widetilde{u}$ a.e.\ on $[0,T]\times \Omega$. 
\item {\rm (unique solution)} A $q$-solution $u$ is said to be a {\rm unique solution} to \eqref{eq:NS_fractional_noise} if it is also a $\widetilde{q}$-solution for all $\widetilde{q}\in (1,\qstar)$.
\end{itemize}
\end{definition}

Before stating our main result, let us first comment on the above definition.
Due to the argument below \eqref{eq:modified_intro}, one cannot expect solutions to \eqref{eq:NS_fractional_noise} with integrability in space larger or equal to $2\qstar$. Furthermore, {even if solutions are constructed by \eqref{split_intro}, the uniqueness class is independent of this splitting. Moreover, the} unique solution of \eqref{eq:NS_fractional_noise} is independent of the choice of $q\in (1,\qstar)$. Such independence is expected from solutions to \eqref{eq:NS_fractional_noise} in light of \eqref{eq:pathwise_regularity_wg_intro}.
Finally, let us discuss the regularity class chosen to define $q$-solutions. Since $\Dom$ is two-dimensional, the space $L^{2q'}(0,T;L^{2q}(\Dom;\R^2))$ has Sobolev index given by (keeping in mind the parabolic scaling) 
$$
-\frac{2}{2q'}-\frac{2}{2q}=-1.
$$
In particular, the regularity class chosen for $q$-solutions to \eqref{eq:NS_fractional_noise} is \emph{critical} for the Navier-Stokes equations in two dimensions and satisfies the classical Ladyzhenskaya–Prodi–Serrin condition.
In light of the recent convex integration results \cite{CL22_sharpnonuniqueness,CL23_nonuniqueness,LT24_nonuniqueness} in absence of noise and with periodic boundary conditions in all directions, the regularity assumption in our definition is expected to be sharp for obtaining uniqueness and a-fortiori well-posedness.

\smallskip

The main result of the current work reads as follows. 

\begin{theorem}
\label{t:global}
Let \Cref{ass:ass_fractional} be satisfied and
$    \uin\in L^0_{\F_0}( \O;\Ls^2)$.
\begin{enumerate}[{\rm(1)}]
    \item\label{it:global1}
    There exists a \emph{unique solution} of \eqref{eq:NS_fractional_noise} in the sense of \Cref{def z weak sol} with paths in
$$
u\in C([0,T];\Ls^2) \ \  \mathbf{P}-a.s.
$$
\item\label{it:global2}
The unique solution of \eqref{eq:NS_fractional_noise} satisfies, for all $t_0\in (0,T)$ and 
$\Dom_0\subset \Dom $ such that $\mathrm{dist}({\Dom_0}, \partial \Dom)>0$,
    \begin{align*}
        u\in C([t_0,T]; C^{\infty}({\Dom_0};\mathbb{R}^2)) \ \  \mathbf{P}-a.s.
    \end{align*}
\end{enumerate}
\end{theorem}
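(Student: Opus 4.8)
The plan is to fix $\om\in\O$ and argue pathwise, writing $u=w_g+v$ as in \eqref{eq:modified_intro}. By \autoref{regularity stokes} the background field satisfies $w_g\in C([0,T];\Ls^{2q})$ for every $q\in(1,\qstar)$, hence $w_g\otimes w_g\in C([0,T];L^{q}(\Dom;\R^{2\times2}))$, and $v$ solves a two-dimensional Navier--Stokes system with homogeneous Dirichlet conditions, a transport-type perturbation of coefficient $w_g$, and the forcing $\p\,\div(w_g\otimes w_g)$. The entire difficulty is that, since $q<2$ by \eqref{eq:def_qstar}, this forcing lies only in $H^{-1,q}$ and \emph{not} in $H^{-1,2}$: testing \eqref{eq:modified_intro} against $v$ produces the term $\int_{\Dom}(w_g\otimes w_g):\nabla v$, which cannot be absorbed by the dissipation $\|\nabla v\|_{L^2}^2$, so the classical $L^2$-energy balance is unavailable. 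This is exactly the infinite-energy phenomenon anticipated in \eqref{eq:pathwise_regularity_wg_intro}.

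My main device to overcome this is a finite \emph{iterative peeling} of linear backgrounds carrying the rough self-interaction, in the spirit of \cite{GP02_infinite_energy,BF09_rough_force}. I would solve successively the linear Stokes problems
\[
\partial_t z_k+Az_k=-\p\,\div R_{k-1},\qquad z_k|_{\Gamma_b\cup\Gamma_u}=0,\qquad z_k(0)=0,\qquad k=1,\dots,N,
\]
where $A$ is the Dirichlet Stokes operator, $R_0=w_g\otimes w_g$, and $R_{k}$ is the part of the self-interaction of the updated background $b_k:=w_g+\sum_{j\le k}z_j$ that is \emph{not yet} of finite energy. Using maximal $L^p$-regularity for $A$, each solve yields $z_k\in L^p(0,T;H^{1,q_k})$ for all $p<\infty$, and the two-dimensional Sobolev embedding $H^{1,q_k}\embed L^{q_k^*}$ with $q_k^*=\tfrac{2q_k}{2-q_k}$ strictly improves the integrability at every step (here $q>1$ is decisive). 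Tracking the worst cross term $w_g\otimes z_k\in L^{\rho_k}$, a short computation gives $\tfrac1{\rho_{k+1}}=\tfrac1{\rho_k}-\tfrac{q-1}{2q}$, so the exponents decrease by a fixed amount and after
\[
N=N(q)\sim\frac{2-q}{q-1}<\infty
\]
steps the residual forcing $F:=-\p\,\div R_N$ satisfies $F\in L^2(0,T;H^{-1,2})$. Producing this finite decomposition, i.e.\ reducing the rough forcing to a finite-energy one, is the heart of the argument and its main obstacle.

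With the backgrounds removed, set $r:=v-\sum_{k=1}^N z_k$. Then $r$ solves a genuine two-dimensional Navier--Stokes system
\[
\partial_t r+Ar+\p\,\div(r\otimes r)+\p\,\div(r\otimes b+b\otimes r)=F,\qquad r|_{\Gamma_b\cup\Gamma_u}=0,\qquad r(0)=\uin,
\]
with $b:=b_N=w_g+\sum_{k}z_k\in L^\infty(0,T;\Ls^{2q})$ and $F\in L^2(0,T;H^{-1,2})$. Now the energy method applies: the quadratic term drops out when tested against $r$, the forcing is controlled by $\langle F,r\rangle\le\eps\|\nabla r\|_{L^2}^2+C\|F\|_{H^{-1,2}}^2$, and the transport term is absorbed through the Ladyzhenskaya inequality $\|r\|_{L^4}^2\lesssim\|r\|_{L^2}\|\nabla r\|_{L^2}$ together with Young's inequality (recall $2q>2$), yielding $\int(r\otimes b+b\otimes r):\nabla r\le\eps\|\nabla r\|_{L^2}^2+C\|b\|_{L^{2q}}^{2q'}\|r\|_{L^2}^2$. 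A pathwise Gronwall argument, using $\int_0^T\|b\|_{L^{2q}}^{2q'}\,\dd t<\infty$, then gives a unique global $r\in C([0,T];\Ls^2)\cap L^2(0,T;H^{1})$. Assembling $u=r+\sum_k z_k+w_g$ shows $u\in C([0,T];\Ls^2)$ $\P$-a.s.\ and that $u$ is a $q$-solution in the sense of \autoref{def z weak sol}; uniqueness in the $q$-solution class follows from the criticality of $L^{2q'}(0,T;\Ls^{2q})$ (the Ladyzhenskaya--Prodi--Serrin scaling), reducing any two $q$-solutions to their finite-energy remainders, for which energy-class uniqueness holds, while independence of $q\in(1,\qstar)$ is inherited from $w_g\in C([0,T];\Ls^{2q})$ for all such $q$. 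This proves \eqref{it:global1}.

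For the interior regularity \eqref{it:global2} I would run a local parabolic bootstrap away from $\partial\Dom$. Since the noise enters only through the boundary condition on $\Gamma_u$, on any $\Dom_0\Subset\Dom$ the field $w_g$ solves the \emph{homogeneous} Stokes system \eqref{Linear stochastic}; interior regularity for Stokes then gives $w_g\in C([t_0,T];C^\infty(\Dom_0;\R^2))$ for each $t_0>0$, the time-continuity being inherited from $W^\hurst$. For $r$ and the $z_k$ one starts from the integrability already established and applies interior Stokes estimates with spatial cut-offs: the commutators of the cut-off with $A$ and $\div$ produce only lower-order terms supported on a slightly larger interior subdomain, where the same estimates apply, so that the regularity propagates inward. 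Because $(v+w_g)\otimes(v+w_g)\in L^{q'}_{\loc}(L^{q})$ to begin with and the two-dimensional nonlinearity is (sub)critical, each iteration upgrades the local spatial integrability and then the local spatial smoothness; the scheme closes and bootstraps to $r,z_k\in C([t_0,T];C^\infty(\Dom_0;\R^2))$. Adding the contributions yields $u\in C([t_0,T];C^\infty(\Dom_0;\R^2))$ $\P$-a.s., which is \eqref{it:global2}.
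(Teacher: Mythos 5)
Your existence scheme is, in substance, the paper's own construction: the ``iterative peeling'' coincides with the splitting $v=\sum_{i=0}^{N-1}v_i+\ov$ of \eqref{eq:v_splitting_proof}--\eqref{system PDE}, your exponent bookkeeping $\tfrac1{\rho_{k+1}}=\tfrac1{\rho_k}-\tfrac{q-1}{2q}$ and $N\sim\tfrac{2-q}{q-1}$ reproduces \eqref{def qi} and \eqref{definition of N}, and the closure for the finite-energy remainder (Ladyzhenskaya plus Young plus Gronwall, with factor $\|b\|_{L^{2q}}^{2q'}$) is exactly \autoref{lemma tecnico} combined with \autoref{Thm deterministic well--posed}. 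So the existence half is correct and not a different route.

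The genuine gap is in your uniqueness argument. You claim uniqueness in the $q$-solution class follows ``by reducing any two $q$-solutions to their finite-energy remainders, for which energy-class uniqueness holds.'' This step fails: if $\widetilde u$ is an \emph{arbitrary} $q$-solution in the sense of \autoref{def z weak sol}, then $\widetilde r:=\widetilde u-w_g-\sum_k z_k$ is a priori only in the critical class $L^{2q'}(0,T;\Ls^{2q})$ with $q<2$; nothing forces $\widetilde r\in C([0,T];H)\cap L^2(0,T;V)$, so energy-class uniqueness does not apply to it, and one cannot test the equation for the difference of two $q$-solutions against itself (the energy pairing is not even defined). This is precisely the point where the paper has to work: in \autoref{prop:uniqueness} the difference $\delta$ of two $q$-solutions is first upgraded, via maximal $L^{q'}$-regularity of $A_q$ and the trace embedding, to $W^{1,q'}(0,T;X_{-1/2,A_q})\cap L^{q'}(0,T;X_{1/2,A_q})\subset C([0,T];B^{1-2/q'}_{q,q'})$, and only then does a Gronwall argument close, using the interpolation inequality \eqref{eq:interpolation_inequality_uniqueness}. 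Moreover, a ``unique solution'' must be a $\widetilde q$-solution simultaneously for all $\widetilde q\in(1,\qstar)$, and the consistency of the constructions for different $q$ (different $N$, different $q_i$) is not automatic; the paper needs the compatibility result \autoref{l:compatibility} for this, which your remark that ``independence of $q$ is inherited from $w_g$'' does not replace.

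Two further points. First, for interior regularity your plan to localize the Stokes system with spatial cut-offs and claim the commutators are ``only lower-order terms'' ignores that the pressure (equivalently the Leray projection) is nonlocal: multiplication by a cut-off does not commute with $\p$, and the localized field is no longer divergence-free, so interior estimates for the nonstationary Stokes system cannot be run as for the heat equation. The paper circumvents exactly this obstruction by passing to the vorticity formulation (curl eliminates the pressure), applying local maximal regularity for the \emph{heat} equation to the cut-off vorticity, recovering $v$ by elliptic theory from $\operatorname{curl}v=\omega$, and iterating --- inductively over the $v_i$ (\autoref{interior regularity vi}) and via a Serrin argument for $\ov$ (\autoref{Lemma uniform boundendness}, \autoref{corollary inteorior regularity nonlinear}). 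Second, your argument is purely pathwise in $\om$, which does not yield the progressive measurability required by \autoref{def z weak sol}; as in \autoref{remark measurability}, measurability must be extracted from the continuous dependence of the solution map on $(\uin,w)$, a property your sketch never establishes.
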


The proof of \Cref{t:global}\eqref{it:global1} and \eqref{it:global2} are given in \Cref{s:uniqueness_q_solution} and \Cref{sec interior regularity nonlinear auxiliary}, respectively. Routine extensions of the above are commented in \Cref{r:generalization} below.

\smallskip

Next, let us discuss the main ideas behind the proof of \Cref{t:global}. As commented above, due to \eqref{eq:pathwise_regularity_wg_intro}, we cannot deal with the techniques introduced in \cite{AL23_boundary_noise} to study \eqref{eq:modified_intro}. Indeed, contrary to  \cite{AL23_boundary_noise,da2002two}, the splitting introduced above is \emph{not} enough to study the global well-posedness of \eqref{eq:NS_fractional_noise} since \eqref{eq:modified_intro} has no Leray solutions since $w_g\otimes w_g\not \in L^2(0,T;L^2)$. Thus, we control the blow-up of the energy of $v$ introducing further splittings depending on the regularity of $w_g$. As discussed above we will show that $w_g\in C([0,T];L^{2q})$ for some $q\in (1,q_{\mathcal{H}})$. Since the space $C([0,T];L^{2q})$, $q>1$ is subcritical for 2D Navier-Stokes equations we have some hopes to exploit the strong time regularity of $w_g$ to circumvent its rough behaviour in space. The heuristic idea above is realized by writing 
\begin{align}
\label{eq:splitting_v_intro}
    v=\sum_{i=0}^{N-1}v_i+\ov,
\end{align} 
where $N$ depends only on $q$. The terms $\{v_i\}_{i\in \{0,\dots, N-1\}}$ are defined inductively solving homogeneous Stokes equations with forcing having mixed regularity in space and time, such that the regularity in space increases in $i$ while the regularity in time decreases in $i$. On the contrary $\ov$ is a Leray-type solution of the remainder equation. In particular, $N$ is chosen large enough such that the equation for $\ov$ has a forcing in $L^2(0,T;H^{-1})$ and therefore is regular enough to prove the existence and uniqueness of Leray solutions, see \Cref{Thm deterministic well--posed} below. 
The reader is referred to \Cref{subsec: nonlinear aux}
for further discussions.

The interior regularity of $u$ in \Cref{t:global}\eqref{it:global2} is treated considering again the splitting $u=v+w_g$ introduced above. The interior regularity of $w_g$ can be proved similarly to the linear part of \cite{AL23_boundary_noise}. On the contrary, the low regularity of $v$ does not allow us to study directly its interior regularity by Serrin's argument as in \cite{AL23_boundary_noise}. For this reason, we rely on the splitting \eqref{eq:splitting_v_intro} analysed in \Cref{subsec: nonlinear aux} to study the well-posedness of \eqref{eq:modified_intro}. 
Combining maximal $L^p$ regularity techniques for studying the interior regularity of the $v_i$'s, an induction argument and a Serrin argument for treating the interior regularity of $\ov$, we obtain the required regularity of $v$. As shown in \cite{serrin1961interior} (see also \cite[Section 13.1]{lemarie2018navier}) and similarly to \cite{AL23_boundary_noise}, higher-order interior time regularity does not appear to be attainable for our Navier–Stokes problem with stochastic boundary conditions. This stands in contrast to the case of the heat equation with white noise boundary conditions, as studied in \cite{brzezniak2015second}. The underlying reason for this phenomenon is the presence of the unknown pressure $P$, whose non-local nature creates a link between interior and boundary regularity, where the noise acts.
\smallskip

To conclude, let us point out that, in contrast to \cite{BF09_rough_force,GP02_infinite_energy}, we employ a different splitting scheme to prove existence due to the presence of the transport-type terms originated by $w_g$. Moreover, the number of splitting $N$ depends on how much the Sobolev index of the space $C([0,T];L^{2q}(\Dom;\R^2))$, i.e.\ $-\frac{1}{q}$, is far from the critical threshold $-1$. In particular, $N\to \infty$ as $q\downarrow 1$. As commented above, such a splitting is also convenient when proving the interior regularity for $u$ which was not addressed in the above-mentioned works.

\begin{remark}[Extensions]
\label{rem:extension}
One can readily check that 
\Cref{t:global} extends in the following cases:
\label{r:generalization}
\begin{itemize}
    \item (Bounded domains) If $\Dom$ is replaced by  a smooth $C^2$-bounded domain in $\R^2$. However, we prefer to keep the same geometry of \cite{AL23_boundary_noise} for two reasons. Firstly, and more importantly, as discussed in \Cref{s:intro}, the model considered has a clear physical interpretation. Secondly, in this way, we can easily compare our results, techniques and assumptions with those of \cite{AL23_boundary_noise}.
    \item (Fractional Volterra noise) 
If $W^\hurst$ is replaced by a $\alpha$-regular Volterra process with $\alpha>\frac{1}{4}$.
Let us recall that a fractional Brownian motion with Hurst parameter $\hurst$ is an example of a $\alpha$-regular Volterra process with $\alpha=\hurst -\frac{1}{2}$. These are non-Markovian stochastic processes which can be represented as integrals of kernels with respect to the Brownian motion and include for example the fractional Liouville Brownian motion and the Rosenblatt process.~Stochastic convolutions with respect to such processes were analyzed in~\cite{BvNS12,MaslowskiC,MCO}. 
\item (Time-dependent $g$)
The term
$g$ in the boundary noise $g \dot{W}^\hurst$ depends on time as long as it is progressively measurable and the corresponding process $w_g$ satisfies \eqref{eq:pathwise_regularity_wg_intro}. 
\item {(Full stochastic Couette flow) 
More general boundary conditions like \begin{align*}
    \begin{cases}
        u_1=U_{up}+g\dot{W}^{\mathcal{H}}\quad &\text{on } (0,T)\times \Gamma_u,\\
    u_1=U_{b}\quad &\text{on } (0,T)\times \Gamma_b.
    \end{cases}
\end{align*}
can be considered for sufficiently smooth velocity fields $U_{up}, U_b$ such that the corresponding process $w_g$ satisfies \eqref{eq:pathwise_regularity_wg_intro}. For example the case $U_{up}, U_b\in L^2((0,T)\times \mathbb{T})$ can be treated. The above can be seen as a Couette flow with uncertainty on the velocity of one of the two surfaces. 
}
\end{itemize}
\end{remark}


\subsection{Overview}
In \Cref{sec preliminaries}, we introduce the functional framework required to study problem \eqref{eq:NS_fractional_noise}.
The proof of \Cref{t:global} is developed in \Cref{sec well posed} and \Cref{sec:interior reg}. Specifically, global well-posedness, i.e. \Cref{it:global1}, is addressed in \Cref{sec well posed}, where we first analyze the linear problem \eqref{Linear stochastic} in \Cref{sec regularity mild stokes}, followed by the nonlinear problem \eqref{eq:modified_intro} in \Cref{subsec: nonlinear aux}.
Interior regularity, i.e. \Cref{it:global2}, is the focus of \Cref{sec:interior reg}. In particular, \Cref{sec interior regularity mild stokes} is devoted to the interior regularity of the solution to the linear problem \eqref{Linear stochastic}, while \Cref{sec interior regularity nonlinear auxiliary} deals with the nonlinear problem \eqref{eq:modified_intro}.

\subsection{Notation} 
\label{sss:notation}
Here we collect some notation which will be used throughout the paper. Additional notation will be introduced where needed. We use $C$ to denote a generic constant, which may vary from line to line. When it is important to emphasize the dependence of $C$ on a parameter $\xi$, we write $C(\xi)$. Moreover, we sometimes write $a \lesssim b$ (resp.\ $a\lesssim_\xi b$) to mean that there exists a constant $C$ (resp. $C(\xi)$) such that $a \leq C b$ (resp.\ $a\leq C(\xi) b$).

Let $q\in (1,\infty)$ be fixed. For any integer $k\geq 1$, we denote by $W^{k,q}$ the standard Sobolev spaces. In the case of non-integer smoothness $s\in (0,\infty)\setminus \mathbb{N}$, we define $W^{s,q}=B^s_{q,q}$ where $B^{s}_{q,q}$ is the Besov space with smoothness $s$, and integrability $q$ and microscopic integrability $q$. We also denote by $H^{s,q}$ the Bessel potential spaces. Both Besov and Bessel potential spaces can be defined via Littlewood–Paley theory (see, e.g., \cite{ST87}, \cite[Section 6]{Saw_Besov}), or through interpolation methods based on the classical Sobolev spaces $W^{k,q}$ (see, e.g., \cite[Chapter 6]{BeLo}).
For a domain $D\subset \R^n$, integer $d\geq 1$, and $\mathcal{A}\in \{W,H\}$, we define the vector-valued spaces by $\mathcal{A}^{s,q}(D;\R^d)=(\mathcal{A}^{s,q}(D))^d$.

Let $\mathcal{K}_1$ and $\mathcal{K}_2$ be two separable Hilbert spaces. We denote by $\calL_2(\mathcal{K}_1,\mathcal{K}_2)$ the set of Hilbert-Schmidt operators from $\mathcal{K}_1$ to $\mathcal{K}_2$. 
We will use the following Fubini-type identity:
$$
H^s(D;\mathcal{K}_1)=\calL_2(\mathcal{K}_1,H^s(D))\ \ \text{ for all }s\in\R,
$$
which follows from \cite[Theorem 9.3.6]{Analysis2} and interpolation theory.

\section{Preliminaries}\label{sec preliminaries}

\subsection{The Stokes operator and its spectral properties}\label{subsec:stokes operator }
In this section, we introduce the functional analytic setup to define all the objects necessary in the following. 
Throughout this subsection, we let $r\in (1,\infty)$. Recall that $\o=\T\times (0,a)$ where $a>0$.\\
We begin by introducing the Helmholtz projection on $L^r(\Dom;\R^2)$, see e.g.\ \cite[Subsection 7.4]{pruss2016moving}. As described also in \cite[Section 2.1]{AL23_boundary_noise}, the projection can be defined via an elliptic problem. We recall its construction here for the sake of completeness. Let $f\in L^r(\Dom;\R^2)$ and let $\psi_f$ be the unique solution of the following elliptic problem
\begin{equation}
\label{eq:psi_f_problem}
\left\{
\begin{aligned}
\Delta  \psi_f &= \div f\quad &\text{ on }&\Dom,\\
\partial_{\hat{n}}  \psi_f &= f\cdot \hat{n}   &\text{ on }&\Gamma_u \cup \Gamma_b.
\end{aligned}
\right.
\end{equation}
Here $\hat{n}$ denotes the exterior normal vector field on $\partial\Dom$. This problem is understood in its standard weak formulation:
\begin{equation}
\int_{\Dom} \nabla \psi_f \cdot \nabla \varphi \,\dd x\dd z = \int_{\Dom} f\cdot \nabla \varphi\,\dd x \dd z\ \  \text{ for all }    \varphi\in C^{\infty}(\Dom).
\end{equation}
By \cite[Corollary 7.4.4]{pruss2016moving} , we have $\psi_f\in W^{1,r}(\Dom)$ and $\|\nabla \psi_f\|_{L^{r}(\Dom;\R^2)}\lesssim \|f\|_{L^r(\Dom;\R^2)}$. The Helmholtz projection $\p_r:L^r(\Dom;\R^2)\rightarrow L^r(\Dom;\R^2)$ is then defined by 
\begin{equation*}
\p_r f= f- \nabla \psi_f, \quad f\in L^r(\Dom;\R^2).    
\end{equation*}
Next, we define the Stokes operator on $L^r(\Dom;\R^2)$ corresponding to the boundary conditions considered in \eqref{eq:NS_fractional_noise}. For notational convenience, we define $A_r$ as minus the Stokes operator so that $A_r$ is a positive operator for $r=2$ (i.e.\ $\langle A_2 u,u \rangle\geq 0$ for all $u\in \Do(A_2)$). Let 
\begin{equation*}
\Ls^r:=\p(L^r(\Dom;\R^2)),\quad \Hs^{s,r}:=H^{s,r}(\Dom;\R^2)\cap \Ls^r, \ \ s\in \R.    
\end{equation*} Then, we define the operator $A_r:\Do(A_r)\subseteq \Ls^r\to \Ls^r$ where
\begin{align*}
\Do(A_r)= \big\{f=(f_1,f_2)\in W^{2,r}(\Dom;\mathbb{R}^2)\cap \Ls^r\,:\,
\ &   f|_{\Gamma_b\cup \Gamma_u}=0\big\},
\end{align*}
and  $A_r u=-\p \Delta u$ for $u\in \Do(A_r)$.

In the main arguments, we need stochastic and deterministic maximal $L^r$-regularity estimates for convolutions. By \cite{MaximalLpregularity, KuWe}, it is enough to provide the boundedness of the $H^{\infty}$-calculus for $A_r$. The reader is referred to \cite[Chapters 3 and 4]{pruss2016moving} and \cite[Chapter 10]{Analysis2} for the main notation and basic results on the $H^{\infty}$-calculus.\\ 
Contrary to \cite{AL23_boundary_noise}, the boundary conditions we are interested in here are much more classical. Indeed, the Stokes operator with no-slip boundary conditions is well-studied. The reader is referred e.g. to \cite[Section 2.8]{hieber2018stokes}, \cite{NS03}, \cite{Gigafract} and \cite[Section 9]{KKW} for the proof of this nowadays classical statement.
\begin{lemma}
\label{lem:fractional_powers_stokes}
For all $r\in (1,\infty)$, the operator $A_{r}$ is invertible and has a bounded $H^{\infty}$-calculus of angle $0$. Moreover, the domain of the fractional powers of $A_r$ is characterized as follows:
$$
\Do(A_{r}^{\alpha})
= 
\left\{
\begin{aligned}
&\Hs^{2\alpha,r}\ \  &\text{ if }& \ \alpha<\frac{1}{2r}, \\
&\big\{u\in\Hs^{2\alpha,r}\,:\,u|_{\partial\Dom}=0\big\} \ \  &\text{ if }& \ \frac{1}{2r}<\alpha\leq 1.
\end{aligned}
\right.
$$
\end{lemma}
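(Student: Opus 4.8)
The plan is to establish \autoref{lem:fractional_powers_stokes} by appealing to the well-developed theory of the Stokes operator under no-slip (Dirichlet) boundary conditions, and then identifying the fractional power domains via the characterization of complex interpolation spaces between $\Ls^r$ and $\Do(A_r)$. Since the boundary conditions here are the classical homogeneous Dirichlet conditions on a smooth domain $\Dom=\T\times(0,a)$, the bulk of the statement is standard and the references cited in the excerpt (\cite{hieber2018stokes,NS03,Gigafract,KKW}) already furnish the bounded $H^\infty$-calculus. First I would invoke one of these references to assert that $A_r$ admits a bounded $H^\infty$-calculus of angle $0$; the angle being $0$ reflects the fact that $A_r$ is (up to the Helmholtz projection) a realization of $-\Delta$ with Dirichlet boundary conditions, which is self-adjoint and positive for $r=2$, and whose spectrum lies on the positive real axis. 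Invertibility follows because $\Dom$ is bounded in the vertical direction and the Poincar\'e inequality (thanks to the Dirichlet condition on $\Gamma_b\cup\Gamma_u$) gives a strictly positive spectral bottom, so $0\notin\sigma(A_r)$.

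The substantive part is the identification of $\Do(A_r^\alpha)$. Here I would use that a bounded $H^\infty$-calculus implies that $A_r$ has bounded imaginary powers, and hence that the fractional power domains coincide with the complex interpolation spaces:
\begin{equation*}
\Do(A_r^\alpha)=[\Ls^r,\Do(A_r)]_\alpha,\qquad \alpha\in(0,1),
\end{equation*}
with equivalence of norms (see \cite[Chapter 3]{pruss2016moving} or \cite[Chapter 10]{Analysis2}). It then remains to compute these interpolation spaces. Since $\Do(A_r)=\{u\in\Hs^{2,r}:u|_{\partial\Dom}=0\}$ and $\Ls^r=\Hs^{0,r}$, the computation reduces, after intersecting with the solenoidal subspace $\Ls^r$, to the classical interpolation identity for Bessel potential spaces with a boundary condition, namely
\begin{equation*}
[L^r(\Dom),\{u\in H^{2,r}(\Dom):u|_{\partial\Dom}=0\}]_\alpha
=
\begin{cases}
H^{2\alpha,r}(\Dom)& \text{if }2\alpha<\tfrac1r,\\
\{u\in H^{2\alpha,r}(\Dom):u|_{\partial\Dom}=0\}& \text{if }2\alpha>\tfrac1r.
\end{cases}
\end{equation*}
This is the standard result that the Dirichlet trace constraint persists under interpolation exactly when the smoothness $2\alpha$ exceeds the critical threshold $1/r$ at which the trace operator $u\mapsto u|_{\partial\Dom}$ is well-defined on $H^{2\alpha,r}$; below the threshold the constraint is vacuous and disappears. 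The threshold $2\alpha=1/r$, i.e.\ $\alpha=1/(2r)$, is precisely the exponent appearing in the statement. I would cite the trace-space interpolation theory (e.g.\ \cite[Chapter 6]{BeLo} together with the mixed-derivative/trace results used for such characterizations) for this identity on the scalar spaces, then tensor it to the $\R^2$-valued setting and intersect with $\Ls^r$, using that the Helmholtz projection $\p$ commutes with the interpolation functor and preserves the relevant spaces.

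The main obstacle I anticipate is the clean handling of the intersection with the solenoidal subspace $\Ls^r$ and the boundary-trace constraint simultaneously: one must verify that interpolating the pair $(\Ls^r,\Do(A_r))$ genuinely yields $\Hs^{2\alpha,r}$ (respectively its trace-constrained subspace) rather than merely the intersection of the scalar interpolation space with $\Ls^r$ plus a spurious correction. This requires knowing that $\p_r$ is bounded on the full Bessel potential scale $H^{s,r}(\Dom;\R^2)$ for the relevant range of $s$ and that it maps the trace-zero subspaces to themselves, so that the interpolation of the projected spaces equals the projection of the interpolated spaces. The regularity theory for $\psi_f$ in \eqref{eq:psi_f_problem} (giving $\p_r$ bounded on $H^{s,r}$ by elliptic regularity, as already quoted from \cite[Corollary 7.4.4]{pruss2016moving} at the $L^r$-level and upgraded by standard elliptic estimates) is what makes this go through. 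The borderline case $\alpha=1/(2r)$ is deliberately excluded in the statement, since there the trace operator is not well-defined on $H^{1/r,r}$ and the interpolation space is a Lizorkin--Triebel/Besov-type space that does not fit cleanly into the Bessel potential scale; excluding it avoids these endpoint subtleties. Apart from this bookkeeping, the proof is an assembly of known results, so I would keep it short and defer the heavy lifting to the cited literature.
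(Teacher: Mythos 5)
First, a point of comparison: the paper does not actually prove this lemma --- it is labelled as classical and deferred wholesale to \cite[Section 2.8]{hieber2018stokes}, \cite{NS03}, \cite{Gigafract} and \cite[Section 9]{KKW}. Your proposal attempts to reconstruct the proof, and its skeleton (bounded $H^\infty$-calculus $\Rightarrow$ bounded imaginary powers $\Rightarrow$ $\Do(A_r^\alpha)=[\Ls^r,\Do(A_r)]_\alpha$, then a Seeley-type interpolation identity with threshold $2\alpha=1/r$) is indeed the skeleton of the argument in the cited literature, in particular of \cite{Gigafract}.

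However, the step that you yourself single out as the main obstacle is resolved by a false claim. You assert that the Helmholtz projection $\p_r$ ``maps the trace-zero subspaces to themselves'', so that $(\Ls^r,\Do(A_r))$ becomes a retract of the scalar couple $\bigl(L^r(\Dom;\R^2),\{u\in H^{2,r}(\Dom;\R^2):u|_{\partial\Dom}=0\}\bigr)$ and interpolation commutes with $\p_r$. This is not true: $\p_r$ does \emph{not} preserve the no-slip condition. If $f\in H^{2,r}(\Dom;\R^2)$ with $f|_{\partial\Dom}=0$, then $\p_r f=f-\nabla\psi_f$ with $\psi_f$ solving the Neumann problem \eqref{eq:psi_f_problem}; on $\partial\Dom$ the normal component of $\nabla\psi_f$ vanishes (since $\partial_n\psi_f=f\cdot\hat{n}=0$), but the tangential component of $\nabla \psi_f$ is generically nonzero, so $(\p_r f)|_{\partial\Dom}\neq 0$. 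Hence $\p_r$ is not a morphism of the two interpolation couples, the retract lemma does not apply, and the reduction to the scalar Seeley identity collapses exactly at the point where the solenoidal constraint and the Dirichlet constraint must be handled simultaneously. This non-commutation of $\p_r$ with Dirichlet boundary conditions is precisely what makes the identification of $\Do(A_r^\alpha)$ a genuine theorem rather than bookkeeping: in \cite{Gigafract} one proves $\Do(A_r^\alpha)=\Do((-\Delta_{D})^\alpha)\cap\Ls^r$ ($\Delta_D$ the Dirichlet Laplacian) by exploiting bounded imaginary powers of \emph{both} operators together with a retraction constructed from their resolvents and a duality argument --- not from $\p_r$ alone, which cannot work for the reason above. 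So either reproduce that argument, or do what the paper does and simply cite \cite{Gigafract,NS03}; as written, your interpolation step has a genuine gap.
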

The above implies that $-A_r$ generates an analytic semigroup on $\Ls^r$ which admits stochastic and deterministic maximal $L^p$-regularity for all $p\in (1,+\infty)$, see \cite[Chapter 3-4]{pruss2016moving} and \cite{MaximalLpregularity}. We denote such semigroup by $S_r(t)$. We continue introducing some known facts about the ``Sobolev tower'' of spaces associated with the operator $A_r$. We denote by 
\begin{align*}
X_{\alpha,A_r}&= \Do(A_r^{\alpha}) \  &\text{ for }&\alpha\geq 0,\\
X_{\alpha,A_r}&= (\Ls^r,\|A_r^{{\alpha}} \cdot\|_{\mathbb{L}^r})^{\sim} \  &\text{ for }&\alpha<0,
\end{align*}
where $\sim$ denotes the completion. Indeed, since $0\in \rho(A_r)$ by \Cref{lem:fractional_powers_stokes}, we have that $f\mapsto \|A_r^{{\alpha}} f\|_{\Ls^r}$ is a norm for all $\alpha<0$. 
Since $(A_r)^*=A_{r'}$, it follows that (see e.g.\ \cite[Chapter 5, Theorem 1.4.9]{Am})
\begin{equation}
\label{eq:duality_fractional_scale}
(X_{\alpha,A_r})^*= X_{-\alpha,A_{r'}}.
\end{equation}
For notational convenience, we will write $A,\ S(t)$ instead of $A_2$ and $S(t)$. Moreover we define 
\begin{align*}
    H:=\mathbb{L}^2,\quad V:=\Do(A^{1/2}).
\end{align*}
We denote by $\langle\cdot,\cdot\rangle$ and $\lVert\cdot\rVert$ the inner product and the norm in $H$, respectively.
In the following, $V^*$ denotes the dual of $V$ and we identify $H$ with its dual $H^*$. Whenever $X$ is a reflexive Banach space such that the embedding $X\hookrightarrow H$ is continuous and dense, denoting by $X^*$ the dual of $X$, the scalar product $\left\langle
\cdot,\cdot\right\rangle $ in $H$ extends to the dual pairing between $X$ and $X^{*}$. We will simplify the notation accordingly.\\

{
For the convenience of the reader, we recall here the definition of the deterministic maximal $L^p$-regularity, since our techniques heavily rely on it. 
\begin{definition}{\em (\cite[Definition 3.5.1]{pruss2016moving})}
    We let $\mathcal{X}$ stand for a Banach space and $\mathcal{A}$ a linear closed operator on $\mathcal{X}$ with domain $\Do(\mathcal{A})$. We say that the inhomogeneous initial value problem on $L^p(0,T;\mathcal{X})$  given by 
\begin{align}\label{max:reg}
\dot{u}(t) + \mathcal{A} u(t) =f(t), ~~u(0)=u_0 
\end{align}
admits maximal $L^p$-regularity, if for each $f\in L^p(0,T;\mathcal{X})$ and $u_0\in (\mathcal{X},\Do(\mathcal{A}))_{1-1/p,p}$ there exists a unique $u\in W^{1,p}(0,T;\mathcal{X})\cap L^p(0,T;\Do(\mathcal{A}))$ satisfying~\eqref{max:reg} a.e.\ in $(0,T)$.
\end{definition}
}

\subsection{The Dirichlet map}
Now we are interested in $L^2$-estimates for the Dirichlet map, i.e.\ we are interested in studying the weak solutions of the elliptic problem
\begin{equation}\label{Non Homogeneous Stokes}
\left\{
\begin{aligned}
-\Delta u+\nabla \pi   &=0, & \text{ on }& \Dom,\\
\operatorname{div} u   &=0\qquad & \text{ on }& \Dom,  \\
u(  \cdot,0)     &=0, & \text{ on }& \Gamma_b,\\
u_1(  \cdot ,a)  & =g, & \text{ on }& \Gamma_u,\\
u_2     &=0, & \text{ on }& \Gamma_u.
\end{aligned}\right.
\end{equation}
To state the main result of this subsection, we formulate \eqref{Non Homogeneous Stokes} in the very weak setting. To this end, we argue formally. Take $\varphi=(\varphi_1,\varphi_2)\in C^{\infty}(\Dom;\R^2)$ such that $\operatorname{div} \varphi=0$, 
$$
\varphi=0, \quad\text{ on } \quad \Gamma_{b}\cup \Gamma_u.
$$
A formal integration by parts shows that \eqref{Non Homogeneous Stokes} implies
\begin{equation}
\label{Non Homogeneous Stokes weak}
    \int_{\Dom}  u\cdot \Delta \varphi \,\dd x \dd z= \int_{\T} g(x) \partial_2\varphi_1(x,a)\,\dd x.
\end{equation}

In particular, the RHS of \eqref{Non Homogeneous Stokes weak} makes sense even in case $g$ is a distribution if we interpret $\int_{\T} g(x) \partial_2\varphi_1(x,a)\,\dd x =\langle \partial_2\varphi_1(\cdot ,a),g\rangle$. The well-posedness of \eqref{Non Homogeneous Stokes} is, as for the properties of the Stokes operator, a well-known fact. Indeed, \Cref{Regularity of the Dirichlet map} below holds. The reader is referred to \cite{amrouche1994decomposition,amrouche2010very,farwig2005very,frohlich2007stokes,schumacher2008very} for its proof and more general results on the Dirichlet boundary values problem above even in case of weighted $L^r$ spaces of Muckenhoupt class and $u\cdot \hat{n}|_{\Gamma_u\cup \Gamma_b}\neq 0$.
\begin{theorem}
    \label{Regularity of the Dirichlet map}
For all $g\in H^{-\frac{1}{2}}(\Gamma_u)$ there exists a unique very weak solution $(u,\pi)\in H\times H^{-1}(\Dom)/ \mathbb{R}$ of \eqref{Non Homogeneous Stokes}. Moreover $(u,\pi)$ satisfy 
\begin{align}
 \label{eq:elliptic_regularity_1}
     \lVert u \rVert+\lVert \pi \rVert_{H^{-1}(\Dom)/ \mathbb{R}}\leq C\lVert g\rVert_{H^{-\frac{1}{2}}(\Gamma_u)}.
 \end{align}
Finally, if $g\in H^{\frac{3}{2}}(\Gamma_u)$, then 
 $(u,\pi)\in \mathbb{H}^{2}\times H^{1}(\Dom)/ \mathbb{R}$ and
 \begin{align}
 \label{eq:elliptic_regularity_2}
     \lVert u \rVert_{\Hs^{2}(\Dom;\R^2)}+\lVert \pi \rVert_{H^{1}(\Dom)/\mathbb R}\leq C\lVert g\rVert_{H^{3/2}(\Gamma_u)}.
 \end{align}
\end{theorem}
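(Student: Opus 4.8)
The plan is to prove \autoref{Regularity of the Dirichlet map} as a consequence of the very weak theory for the Stokes operator established above, treating the inhomogeneous Dirichlet datum by a duality argument against the homogeneous Stokes problem. First I would set up the right pairing. For the existence and uniqueness in $H\times H^{-1}(\Dom)/\R$, I would view \eqref{Non Homogeneous Stokes weak} as prescribing the value of $u$ tested against $\Delta\varphi$, where $\varphi$ ranges over divergence-free smooth fields vanishing on the whole boundary. The key observation is that the map $\varphi\mapsto \Delta\varphi$ (composed with the Leray projection) is, up to the Stokes operator $A$, an isomorphism: given any $f\in\Ls^2$, \autoref{lem:fractional_powers_stokes} produces a unique $\varphi=A^{-1}f\in\Do(A)=V\cap\Hs^{2,2}$ with $\varphi|_{\partial\Dom}=0$ and $\p\Delta\varphi=-f$. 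Thus the right-hand side of \eqref{Non Homogeneous Stokes weak} defines a bounded linear functional $f\mapsto\langle \partial_2(A^{-1}f)_1(\cdot,a),g\rangle$ on $\Ls^2$, and I would identify $u\in H$ as its Riesz representative.

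The estimate \eqref{eq:elliptic_regularity_1} then reduces to a trace bound: I need $\|\partial_2(A^{-1}f)_1(\cdot,a)\|_{H^{1/2}(\Gamma_u)}\lesssim \|f\|_{\Ls^2}$, since pairing against $g\in H^{-1/2}(\Gamma_u)$ would then give the functional norm $\lesssim\|g\|_{H^{-1/2}(\Gamma_u)}$. This is precisely where the regularity theory for $A$ does the work: $A^{-1}f\in\Hs^{2,2}$ by \autoref{lem:fractional_powers_stokes}, so $\partial_2(A^{-1}f)_1\in H^{1,2}(\Dom)$, and the trace theorem on the flat boundary $\Gamma_u=\T\times\{a\}$ gives $\partial_2(A^{-1}f)_1(\cdot,a)\in H^{1/2}(\Gamma_u)$ with the bound controlled by $\|A^{-1}f\|_{\Hs^{2,2}}\lesssim\|f\|_{\Ls^2}$. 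The corresponding bound on the pressure $\pi$ in $H^{-1}(\Dom)/\R$ would follow from recovering $\nabla\pi$ from the equation $\Delta u=\nabla\pi$ (distributionally) and the de Rham/negative-norm characterization of gradients.

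For the higher regularity claim, when $g\in H^{3/2}(\Gamma_u)$, I would switch to the direct (rather than dual) approach. The natural strategy is to construct an explicit divergence-free lifting $E g\in\Hs^{2}(\Dom;\R^2)$ of the boundary data $(g,0)$ on $\Gamma_u$ and $0$ on $\Gamma_b$, using the trace theorem in the regular range (here $H^{3/2}$ is exactly the boundary trace space of $H^2$), then solve the homogeneous-boundary Stokes problem for the correction $u-Eg$ with forcing $-A(Eg)+\ldots\in\Ls^2$, and absorb the divergence of the lifting via a corrector. The bounds \eqref{eq:elliptic_regularity_2} then follow from the maximal regularity of $A$ and the boundedness of the lifting operator $E:H^{3/2}(\Gamma_u)\to\Hs^{2}(\Dom;\R^2)$.

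The main obstacle I anticipate is the careful bookkeeping at the boundary in the low-regularity case: namely establishing the sharp trace estimate $\|\partial_2(A^{-1}f)_1(\cdot,a)\|_{H^{1/2}(\Gamma_u)}\lesssim\|f\|_{\Ls^2}$ and verifying that the normal component constraint $u_2=0$ and the divergence-free condition are compatible with the chosen test-function class (so that the duality pairing is well-defined and the recovered $u$ genuinely satisfies all five conditions of \eqref{Non Homogeneous Stokes} in the very weak sense). Rather than reprove this, I would cite the standard very weak Stokes theory in \cite{amrouche2010very,farwig2005very,frohlich2007stokes,schumacher2008very}, since the flat geometry of $\Dom$ makes all the trace and lifting constructions explicit and the result is classical; the role of this subsection is simply to record the precise estimates \eqref{eq:elliptic_regularity_1}--\eqref{eq:elliptic_regularity_2} in the form needed later.
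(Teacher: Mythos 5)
The paper gives no proof of this theorem at all: it records the statement as classical and refers to the very weak Stokes literature \cite{amrouche1994decomposition,amrouche2010very,farwig2005very,frohlich2007stokes,schumacher2008very}, which is exactly the fallback you adopt in your final paragraph, with the same references. Your sketch of how those references argue — testing against $\varphi=A^{-1}f$ to reduce \eqref{eq:elliptic_regularity_1} to the trace bound $\|\partial_2(A^{-1}f)_1(\cdot,a)\|_{H^{1/2}(\Gamma_u)}\lesssim\|f\|_{\Ls^2}$, recovering the pressure by de Rham, and handling the $H^{3/2}$ case by a divergence-free lifting plus maximal regularity of $A$ — is sound, so your proposal is correct and takes essentially the same approach as the paper.
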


Next, we denote by $\mathcal{D}$ the solution map defined by \Cref{Regularity of the Dirichlet map} which associate to a boundary datum $g$ the velocity $u$ solution of \eqref{Non Homogeneous Stokes}, i.e.\ $\mathcal{D} g:=u$. 
From the above result, we obtain 

\begin{corollary}\label{Corollary regularity Neumann map}
Let $\mathcal{D}$ and $U$ be the Dirichlet map and a separable Hilbert space, respectively. 
Then
\begin{align*}
   \mathcal{D}\in \mathscr{L}(H^{-\alpha}(\Gamma_u;U),\calL_2(U,\Do(A^{-\frac{\alpha}{2}+\frac{1}{4}})))\quad \text{for } \alpha\in \left[-\frac{1}{2},0\right). 
\end{align*}
\end{corollary}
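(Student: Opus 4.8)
The plan is to derive the corollary by complex interpolation of the two endpoint estimates in \autoref{Regularity of the Dirichlet map}, and then to upgrade the resulting scalar bound to the $U$-valued statement. First I would reduce everything to the scalar mapping property
$\mathcal{D}\colon H^{\sigma}(\Gamma_u)\to\Do(A^{(\sigma+1/2)/2})$ (boundedly, for $\sigma$ in the admissible range). Granting this, for a Hilbert--Schmidt operator $G\in\calL_2(U,H^{\sigma}(\Gamma_u))$ the composition $\mathcal{D}\circ G$ belongs to $\calL_2(U,\Do(A^{(\sigma+1/2)/2}))$ with $\|\mathcal{D}G\|_{\calL_2}\le\|\mathcal{D}\|_{\mathscr{L}}\,\|G\|_{\calL_2}$, by the ideal property of Hilbert--Schmidt operators under composition with bounded maps. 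Since $H^{\sigma}(\Gamma_u;U)=\calL_2(U,H^{\sigma}(\Gamma_u))$ by the Fubini-type identity recalled in \autoref{sss:notation}, this is precisely the assertion of the corollary upon setting $\sigma=-\alpha$.

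For the scalar estimate I would interpolate the two bounds $\mathcal{D}\colon H^{-1/2}(\Gamma_u)\to\Ls^{2}$, given by \eqref{eq:elliptic_regularity_1}, and $\mathcal{D}\colon H^{3/2}(\Gamma_u)\to\Hs^{2}$, given by \eqref{eq:elliptic_regularity_2}. Complex interpolation yields $[H^{-1/2}(\Gamma_u),H^{3/2}(\Gamma_u)]_{\theta}=H^{-1/2+2\theta}(\Gamma_u)$ on the data side and $[\Ls^{2},\Hs^{2}]_{\theta}=\Hs^{2\theta}$ on the range side, the latter being the standard interpolation of Bessel potential spaces transported to the divergence-free subspace through the bounded Helmholtz projection $\mathbb{P}$ (a retraction/co-retraction argument). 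Hence $\mathcal{D}\colon H^{-1/2+2\theta}(\Gamma_u)\to\Hs^{2\theta}$ for every $\theta\in[0,1]$, which encodes the fact that $\mathcal{D}$ gains exactly half a spatial derivative, $H^{\sigma}\to\Hs^{\sigma+1/2}$.

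The decisive step is to identify the target Bessel space $\Hs^{2\theta}$ with the fractional power domain $\Do(A^{\theta})$ via \autoref{lem:fractional_powers_stokes}, after which $\theta=(\sigma+1/2)/2=-\alpha/2+1/4$ produces the claimed exponent. This is also where I expect the only genuine obstacle to sit: since $\mathcal{D}g$ carries the inhomogeneous trace $u_1|_{\Gamma_u}=g\neq 0$, it does not lie in the part of the scale on which $A$ enforces a vanishing boundary trace, so the identification $\Do(A^{\theta})=\Hs^{2\theta}$ is only legitimate strictly below the trace threshold $2\theta=1/2$ of \autoref{lem:fractional_powers_stokes}; for larger exponents $\Do(A^{\theta})$ would impose $u|_{\partial\Dom}=0$, which $\mathcal{D}g$ violates. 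Keeping $\theta$ below this threshold delivers the asserted mapping property $\mathcal{D}\colon H^{-\alpha}(\Gamma_u)\to\Do(A^{-\alpha/2+1/4})$ on the corresponding range of $\alpha$, and then the lifting of the first paragraph gives the $U$-valued statement. The remaining points are routine: the compatibility of complex interpolation with the closed subspaces $\Ls^{2},\Hs^{2}$ via $\mathbb{P}$, and the density argument needed to define $\mathcal{D}$ canonically on $U$-valued data.
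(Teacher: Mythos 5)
Your argument is, step for step, the paper's own proof: reduction to the scalar case through the identity $H^{s}(\Gamma_u;U)=\calL_2(U,H^{s}(\Gamma_u))$ and the ideal property of Hilbert--Schmidt operators, complex interpolation between \eqref{eq:elliptic_regularity_1} and \eqref{eq:elliptic_regularity_2} to get $\mathcal{D}\colon H^{2\theta-\frac{1}{2}}(\Gamma_u)\to \Hs^{2\theta}$ for $\theta\in(0,1)$, and the identification of the target with a fractional power domain via \autoref{lem:fractional_powers_stokes}.

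The one place where you are more careful than the paper is the last step, and your observation there has a real consequence. As you note, the identification $\Do(A^{\theta})=\Hs^{2\theta}$ is legitimate only for $\theta<\frac{1}{4}$, i.e.\ for $-\frac{\alpha}{2}+\frac{1}{4}<\frac{1}{4}$, i.e.\ $\alpha>0$. For the interval actually printed in the corollary, $\alpha\in[-\frac{1}{2},0)$, the exponent $-\frac{\alpha}{2}+\frac{1}{4}$ lies in $(\frac{1}{4},\frac{1}{2}]$, where \autoref{lem:fractional_powers_stokes} says that $\Do(A^{\theta})$ consists of fields with vanishing boundary trace; since $\mathcal{D}g$ has trace $(g,0)\neq 0$ on $\Gamma_u$, the conclusion is genuinely false on that range, for exactly the reason you give. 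So the printed interval is a sign slip: what your proof (and the paper's) establishes is the statement for $\alpha\in(0,\frac{1}{2})$ --- equivalently $\theta=\frac{1}{4}-\frac{\alpha}{2}\in(0,\frac{1}{4})$ --- with the endpoint $\alpha=\frac{1}{2}$ being nothing but \eqref{eq:elliptic_regularity_1} itself. This is also the only range in which the corollary is ever used: in \autoref{regularity stokes} it is applied with $\alpha=s+\varepsilon>0$ to conclude $\D g\in\calL_2\bigl(U,\Do(A^{\frac{1}{4}-\frac{s+\varepsilon}{2}})\bigr)$. Your closing hedge, ``on the corresponding range of $\alpha$,'' is the correct conclusion; it should simply be stated explicitly as $\alpha\in(0,\frac{1}{2})$ rather than left implicit.
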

\begin{proof}
To begin, recall that $H^{s}(\Gamma_u;U)=\calL_2(U,H^s(\Gamma_u))$ for all $s\in \R$, see \Cref{sss:notation}. Hence, due to the ideal property of Hilbert-Schmidt operators, it is enough to consider the scalar case $U=\R$.

By complex interpolation, the estimates in \Cref{Regularity of the Dirichlet map} yield 
$$
\mathcal{D} : H^{2\theta-\frac{1}{2}} (\Gamma_u)\to \Hs^{2\theta}(\Dom)\ \  \text{ for all }\theta\in (0,1).
$$
Hence, the claim now follows from the description of the fractional power of $A$ in \Cref{lem:fractional_powers_stokes}.
\end{proof}
\subsection{Deterministic Navier-Stokes equations}
\label{ss:deterministic_NS}
Let us consider the deterministic Navier-Stokes equations with homogeneous boundary conditions 
\begin{equation}
\left\{
\begin{aligned}
\partial_{t}\overline{u}+\overline{u}\cdot\nabla\overline{u}+\nabla\overline{\pi}  & =\Delta \overline{u}+\overline{f}, \quad &\text{ on }&(0,T)\times \Dom,\\
\operatorname{div}\overline{u}  & =0,&\text{ on }&(0,T)\times \Dom,\\
\overline{u}   & = 0, &\text{ on }&(0,T)\times (\Gamma_b\cup \Gamma_u), \\ 
\overline{u}(0)   & =\overline{u}_0, &\text{ on }&\Dom .  
\end{aligned}
\right.
\label{classical NS}   
\end{equation}
Define the trilinear form 
\begin{equation}
\label{def:b_trilinear_form}
b\left(  u,v,w\right)  =\sum_{i,j=1}^{2}\int_{\Dom}u_{i}
\partial_{i}v_{j}  w_{j}  \, \dd x\dd z=\int_{\Dom}\left(
u\cdot\nabla v\right)  \cdot w\,\dd x\dd z
\end{equation}
which is well--defined and continuous on $\mathbb{L}^{p}\times \mathbb{H}^{1,q}\times
\mathbb{L}^{r}$ by H\"{o}lder's inequality, whenever \begin{align*}
    \frac{1}{p}+\frac{1}{q}+\frac{1}{r}=1.
\end{align*} 
Finally, we introduce the operator
\[
B:\mathbb{L}^{p}\times\mathbb{L}^{r}\rightarrow X_{-1/2,A_{q'}}%
\]
defined by the identity%
\[
\left\langle B\left(  u,v\right)  ,\phi\right\rangle_{X_{-1/2,A_{q'}},X_{1/2,A_q}} =-b\left(  u,\phi
,v\right)  =-\int_{\Dom}\left(  u\cdot\nabla\phi\right)  \cdot v\,\dd x\dd z
\]
for all $\phi\in X_{1/2,A_q}$. Moreover, {if $u\cdot\nabla v\in L^{r}\left(  \Dom;\mathbb{R}^{2}\right)  $ for some $r\in (1,\infty)$}, it
is explicitly given by
\[
B\left(  u,v\right)  =\p(  u\cdot\nabla v).
\]
We have to define our notion of a weak solution for problem \eqref{classical NS}.
\begin{definition}
\label{Def NS determ}Given $\overline{u}_{0}\in H$ and $\overline{f}\in L^{2}\left(  0,T;V^{*}\right)  $, we say that
\[
\overline{u}\in C\left(  \left[  0,T\right]  ;H\right)  \cap L^{2}\left(  0,T;V\right)
\]
is a weak solution of equation \eqref{classical NS} if 
for all $\phi\in \Do\left(  A\right)  $ and $t\in [0,T]$,
\begin{align*}
&  \left\langle \overline{u}\left(  t\right)  ,\phi\right\rangle -\int_{0}^{t}b\left(
\overline{u}\left(  s\right)  ,\phi,\overline{u}\left(  s\right)  \right)  \,\dd s\\
&  =\left\langle \overline{u}_{0},\phi\right\rangle -\int_{0}^{t}\left\langle \overline{u}\left(
s\right)  ,A\phi\right\rangle \,\dd s+\int_{0}^{t}\left\langle \overline{f}\left(  s\right)
,\phi\right\rangle_{V^*,V} \,\dd s.
\end{align*}
\end{definition}
The well-posedness of \eqref{classical NS} in the sense of \Cref{Def NS determ} is a well-known fact. Indeed the following theorem holds, see for instance \cite{lions1996mathematical,temam1995navier,temam2001navier}.
\begin{theorem}
\label{Thm deterministic well--posed}For every $\overline{u}_{0}\in H$ and $\overline{f}\in
L^{2}\left(  0,T;V^*\right)  $ there exists a unique weak solution of
equation \eqref{classical NS}. It satisfies
\[
\lVert \overline{u}\left(  t\right)  \rVert^{2}+2\int_{0}%
^{t}\lVert \nabla \overline{u}\left(  s\right)  \rVert _{L^{2}}^{2}\,\dd s=\lVert
\overline{u}_{0}\rVert^{2}+2\int_{0}^{t}\left\langle \overline{u}\left(  s\right)
,\overline{f}\left(  s\right)  \right\rangle_{V^*,V} \,\dd s.
\]
If $\left(  \overline{u}_{0}^{n}\right)  _{n\in\mathbb{N}}$ is a sequence in $H$
converging to $\overline{u}_{0}\in H$ and $\left(  \overline{f}^{n}\right)  _{n\in\mathbb{N}}$ is a
sequence in $L^{2}\left(  0,T;V^*\right)  $ converging to $\overline{f}\in
L^{2}\left(  0,T;V^*\right)  $, then the corresponding unique solutions
$\left(  \overline{u}^{n}\right)  _{n\in\mathbb{N}}$ converge to the corresponding
solution $\overline{u}$ in $C\left(  \left[  0,T\right]  ;H\right)  $ and in
$L^{2}\left(  0,T;V\right)  $.
\end{theorem}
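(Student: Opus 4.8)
The plan is to run the classical Galerkin scheme, the two-dimensional structure being what closes the energy estimate and yields uniqueness. Let $\{e_k\}_{k\ge1}$ be the orthonormal basis of $H$ of eigenfunctions of the Stokes operator $A$ --- these exist because $A$ is invertible with domain compactly embedded in $H$ by \autoref{lem:fractional_powers_stokes}, hence has compact resolvent --- and write $H_n=\mathrm{span}\{e_1,\dots,e_n\}$ with $\Pi_n$ the orthogonal projection onto $H_n$. I would look for $\overline{u}^n(t)=\sum_{k=1}^n c_k^n(t)e_k$ solving, for $k=1,\dots,n$,
\[
\langle \partial_t \overline{u}^n,e_k\rangle+\langle \overline{u}^n,Ae_k\rangle+b(\overline{u}^n,\overline{u}^n,e_k)=\langle \overline{f},e_k\rangle_{V^*,V},\qquad \overline{u}^n(0)=\Pi_n\overline{u}_0 .
\]
This is a locally Lipschitz ODE system in $(c_k^n)_{k\le n}$, so Cauchy--Lipschitz (in Carath\'eodory form, since $\overline{f}$ is merely $L^2$ in time) gives a unique maximal solution, global by the a priori bound below.

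Testing with $\overline{u}^n$ and using the antisymmetry $b(u,v,w)=-b(u,w,v)$, valid since $\overline{u}^n$ is divergence-free and vanishes on $\partial\Dom$, I get $b(\overline{u}^n,\overline{u}^n,\overline{u}^n)=0$ and hence the energy identity at the Galerkin level; integrating and applying Young's inequality to $\langle\overline{f},\overline{u}^n\rangle_{V^*,V}$ yields the uniform bound
\[
\sup_n\Big(\|\overline{u}^n\|_{L^\infty(0,T;H)}^2+\|\overline{u}^n\|_{L^2(0,T;V)}^2\Big)\le C\big(\|\overline{u}_0\|^2+\|\overline{f}\|_{L^2(0,T;V^*)}^2\big).
\]
The two-dimensional Ladyzhenskaya inequality $\|f\|_{L^4(\Dom)}^2\lesssim\|f\|\,\|f\|_V$ then bounds $B(\overline{u}^n,\overline{u}^n)$ uniformly in $L^2(0,T;V^*)$, and reading off $\partial_t\overline{u}^n$ from the equation gives a uniform bound in $L^2(0,T;V^*)$. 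By the Aubin--Lions--Simon lemma a subsequence converges weakly-$*$ in $L^\infty(0,T;H)$, weakly in $L^2(0,T;V)$ and strongly in $L^2(0,T;H)$ to some $\overline{u}$; the strong $L^2(0,T;H)$ convergence is exactly what lets me pass to the limit in $b(\overline{u}^n,\overline{u}^n,\phi)=-b(\overline{u}^n,\phi,\overline{u}^n)$ for fixed $\phi\in\Do(A)$, the remaining terms passing by weak convergence. This produces a weak solution in the sense of \autoref{Def NS determ}, and $\overline{u}\in L^2(0,T;V)$ with $\partial_t\overline{u}\in L^2(0,T;V^*)$ forces $\overline{u}\in C([0,T];H)$ by the Lions--Magenes lemma.

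The same regularity legitimizes testing the limiting equation against $\overline{u}$ itself, giving $\tfrac{d}{dt}\tfrac12\|\overline{u}\|^2=-\|\nabla\overline{u}\|_{L^2}^2+\langle\overline{f},\overline{u}\rangle_{V^*,V}$ (again $b(\overline{u},\overline{u},\overline{u})=0$); integrating produces the stated identity, with no dissipation defect since in 2D the solution is regular enough to exclude anomalous dissipation. For uniqueness and stability simultaneously, take solutions $\overline{u},\overline{v}$ with data $(\overline{u}_0,\overline{f})$, $(\overline{v}_0,\overline{g})$ and set $w=\overline{u}-\overline{v}$; testing the difference equation with $w$, using $b(\overline{u},w,w)=0$ and estimating the surviving term by $|b(w,\overline{v},w)|\le\tfrac12\|\nabla w\|_{L^2}^2+C\|\overline{v}\|_V^2\|w\|^2$ via Ladyzhenskaya and Young, I arrive at
\[
\|w(t)\|^2\le e^{C\int_0^t\|\overline{v}\|_V^2\,\dd s}\Big(\|w(0)\|^2+\|\overline{f}-\overline{g}\|_{L^2(0,T;V^*)}^2\Big).
\]
Since $\int_0^T\|\overline{v}\|_V^2\,\dd s<\infty$, this gives uniqueness when the data agree and convergence in $C([0,T];H)$ when the data converge; convergence in $L^2(0,T;V)$ then follows by combining the weak convergence with convergence of the $L^2(0,T;V)$ norms, the latter read off from the energy equality together with the $C([0,T];H)$ convergence.

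The genuinely two-dimensional input sits in the Ladyzhenskaya inequality: it is what makes $B(\overline{u},\overline{u})$ land in $L^2(0,T;V^*)$ and, crucially, what makes the uniqueness Gronwall argument close --- in three dimensions the term $b(w,\overline{v},w)$ is supercritical and uniqueness of weak solutions is open. The only non-algebraic step in the existence part is the strong compactness from Aubin--Lions needed to pass to the limit in the quadratic nonlinearity.
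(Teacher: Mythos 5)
Your proof is correct: it is the classical Galerkin construction (spectral basis of the Stokes operator, energy estimate closed by antisymmetry of $b$, Ladyzhenskaya's inequality to control $B(\overline{u},\overline{u})$ in $L^2(0,T;V^*)$, Aubin--Lions compactness for the quadratic term, Lions--Magenes for continuity and the energy equality, and the 2D Gronwall argument giving uniqueness and stability simultaneously). The paper does not prove this theorem itself but cites it as classical from \cite{lions1996mathematical,temam1995navier,temam2001navier}, and those references argue by exactly this route, so your proposal coincides with the paper's (delegated) proof.
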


We end this section with the following lemma, which generalizes \cite[Lemma 1.14]{flandoli2023stochastic} to the $L^p_tL^q_x$ setting. In particular, \cite[Lemma 1.14]{flandoli2023stochastic} corresponds to the specific case $p=q=4.$
\begin{lemma}\label{lemma tecnico}
If {$q>2$} and $p\geq \frac{2q}{q-2}$, $u\in C([0,T];H)\cap L^2(0,T;V),\ v\in L^{p}(0,T;\mathbb{L}^q)$, then
\begin{align}\label{estimate 1 lemma tecnico}
    B(u,v)\in L^2(0,T;V^*),\\
    \label{estimate 4 lemma tecnico} B(v,u)\in L^2(0,T;V^*). 
\end{align}
In particular for each $t\in [0,T],\ \eps,\ \eps'>0$ and $\phi\in L^2(0,T;V)$ it holds
\begin{align}
\label{estimate 2 lemma tecnico}
\int_0^t \lvert\langle u(s)\cdot&\nabla \phi(s), v(s)\rangle\rvert \,\dd s\notag\\ & \leq \eps\lVert\phi\rVert_{L^2(0,t;V)}^2+\eps' \int_0^t \lVert u(s)\rVert_V^2 \,\dd s+\frac{C}{\eps^{\frac{q}{q-2}}{\eps'}^\frac{2}{q-2}}\int_0^t \lVert u(s)\rVert^{2}\lVert v(s)\rVert_{\mathbb{L}^q}^{\frac{2q}{q-2}}\,\dd s,\\
\label{estimate 3 lemma tecnico}
\int_0^t \lvert\langle v(s)\cdot&\nabla \phi(s), u(s)\rangle\rvert \,\dd s\notag \\ & \leq \eps\lVert\phi\rVert_{L^2(0,t;V)}^2+\eps' \int_0^t \lVert u(s)\rVert_V^2 \,\dd s+\frac{C}{\eps^{\frac{q}{q-2}}{\eps'}^\frac{2}{q-2}}\int_0^t \lVert u(s)\rVert^{2}\lVert v(s)\rVert_{\mathbb{L}^q}^{\frac{2q}{q-2}}\,\dd s,
\end{align}
where $C$ is a constant independent from $\eps,\ \eps'.$
\end{lemma}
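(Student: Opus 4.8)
The plan is to reduce both conclusions to a single pointwise-in-$s$ trilinear estimate and then integrate in time.

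First I would record the basic pointwise bound. For a.e.\ $s$ and any $\phi\in V$, the definition of $B$ and Hölder's inequality give
$$
|\langle B(u(s),v(s)),\phi\rangle|=|b(u(s),\phi,v(s))|=\Big|\int_{\Dom}(u(s)\cdot\nabla\phi)\cdot v(s)\,\dd x\,\dd z\Big|\leq \|u(s)\|_{\mathbb{L}^m}\,\|\nabla\phi\|_{L^2}\,\|v(s)\|_{\mathbb{L}^q},
$$
where $m=\frac{2q}{q-2}$ is the exponent fixed by $\frac1m+\frac12+\frac1q=1$ (note $q>2$ is forced by the hypothesis $p\geq\frac{2q}{q-2}$). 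By \autoref{lem:fractional_powers_stokes}, $V=\Do(A^{1/2})$ consists of divergence-free $H^1$-fields vanishing on $\partial\Dom$, so the two-dimensional Gagliardo--Nirenberg (Ladyzhenskaya) inequality applies and yields $\|u\|_{\mathbb{L}^m}\leq C\|u\|^{(q-2)/q}\|u\|_V^{2/q}$, since $\frac2m=\frac{q-2}{q}$ and $1-\frac2m=\frac2q$. Combining these with $\|\nabla\phi\|_{L^2}\leq C\|\phi\|_V$ gives the master estimate
$$
|b(u(s),\phi,v(s))|\leq C\,\|u(s)\|^{\frac{q-2}{q}}\|u(s)\|_V^{\frac2q}\|v(s)\|_{\mathbb{L}^q}\,\|\phi\|_V .
$$
Because $u$ and $v$ enter through $\|\cdot\|_{\mathbb{L}^m}$ and $\|\cdot\|_{\mathbb{L}^q}$ in symmetric roles, exchanging the first and third slots gives the identical bound for $|b(v(s),\phi,u(s))|$; this will produce \eqref{estimate 4 lemma tecnico} and \eqref{estimate 3 lemma tecnico} in parallel with \eqref{estimate 1 lemma tecnico} and \eqref{estimate 2 lemma tecnico}.

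For \eqref{estimate 1 lemma tecnico} I would take the supremum over $\|\phi\|_V\leq1$ to obtain $\|B(u,v)(s)\|_{V^*}\leq C\|u(s)\|^{(q-2)/q}\|u(s)\|_V^{2/q}\|v(s)\|_{\mathbb{L}^q}$, then square, extract $\|u\|_{L^\infty(0,T;H)}^{2(q-2)/q}$ using $u\in C([0,T];H)$, and apply Hölder in time with the conjugate pair $(\tfrac q2,\tfrac{q}{q-2})$:
$$
\int_0^T \|B(u,v)(s)\|_{V^*}^2\,\dd s\leq C\,\|u\|_{L^\infty(0,T;H)}^{\frac{2(q-2)}{q}}\Big(\int_0^T\|u\|_V^2\,\dd s\Big)^{\frac2q}\Big(\int_0^T\|v\|_{\mathbb{L}^q}^{\frac{2q}{q-2}}\,\dd s\Big)^{\frac{q-2}{q}}.
$$
Both factors are finite: the first since $u\in L^2(0,T;V)$, the second since $p\geq\frac{2q}{q-2}$ forces $v\in L^{2q/(q-2)}(0,T;\mathbb{L}^q)$ on the finite interval $[0,T]$. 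This is the only place the assumption on $p$ enters, and it is exactly the exponent making the time integral of the master estimate converge; the symmetric computation gives \eqref{estimate 4 lemma tecnico}.

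Finally, for \eqref{estimate 2 lemma tecnico} I would integrate the master estimate in $s$ and split the product $\|\phi\|_V\cdot\|u\|_V^{2/q}\cdot\big(\|u\|^{(q-2)/q}\|v\|_{\mathbb{L}^q}\big)$ by Young's inequality with the three exponents $(r_1,r_2,r_3)=(2,\,q,\,\tfrac{2q}{q-2})$, which satisfy $\frac1{r_1}+\frac1{r_2}+\frac1{r_3}=1$. Assigning weight $\eps$ to the $\|\phi\|_V^2$ term and $\eps'$ to the $\|u\|_V^2$ term, the residual exponents are $r_3/r_1=\frac{q}{q-2}$ and $r_3/r_2=\frac{2}{q-2}$, so the third term carries the constant $C\eps^{-q/(q-2)}(\eps')^{-2/(q-2)}$ multiplying $\big(\|u\|^{(q-2)/q}\|v\|_{\mathbb{L}^q}\big)^{2q/(q-2)}=\|u\|^2\|v\|_{\mathbb{L}^q}^{2q/(q-2)}$; after integrating in $s$ this is precisely the claimed right-hand side, with $C$ independent of $\eps,\eps'$, and \eqref{estimate 3 lemma tecnico} follows from the symmetric master estimate. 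This lemma is essentially routine, so there is no genuine obstacle; the only point requiring care is the exponent bookkeeping in the three-fold Young inequality, which must reproduce exactly the powers $q/(q-2)$ and $2/(q-2)$ of $\eps$ and $\eps'$, together with the observation that $r_3=p$ at the threshold is what closes the time integration.
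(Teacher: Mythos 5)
Your proposal is correct and follows essentially the same route as the paper: both rest on the identical pointwise master estimate $|b(u(s),\phi,v(s))|\lesssim \|u(s)\|^{1-2/q}\|u(s)\|_V^{2/q}\|v(s)\|_{\mathbb{L}^q}\|\phi\|_V$ (the paper obtains the interpolation step via the Sobolev embedding $\Do(A^{1/q})\embed \mathbb{L}^{2q/(q-2)}$ together with the moment inequality for fractional powers, you via the two-dimensional Ladyzhenskaya--Gagliardo--Nirenberg inequality, which is equivalent since $V$ consists of $H^1$-fields vanishing on the boundary), followed by the same symmetry observation for $B(v,u)$ and by Young-type absorption producing exactly the constants $\eps^{-q/(q-2)}(\eps')^{-2/(q-2)}$. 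The only cosmetic difference is organizational: you apply a single pointwise three-fold Young inequality with exponents $(2,q,\tfrac{2q}{q-2})$ and then integrate, whereas the paper first absorbs $\|\phi\|_V$ with weight $\eps$, then applies H\"older in time with exponents $(\tfrac{q}{2},\tfrac{q}{q-2})$, and finally a second Young inequality with weight $\eps'$ --- both computations yield the stated right-hand sides.
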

\begin{proof}
By H\"older inequality, Sobolev embedding theorem and interpolation, for each $\phi\in V$ we have
\begin{align*}
    \lvert\langle B(u(s),v(s)),\phi\rangle\rvert &=\lvert\langle u(s)\cdot\nabla \phi,v(s)\rangle\rvert\\ & \leq \lVert \phi\rVert_V \lVert v(s)\rVert_{\mathbb{L}^q}\lVert u(s)\rVert_{\mathbb{L}^{2q/(q-2)}}\\ & \lesssim_q \lVert \phi\rVert_V \lVert v(s)\rVert_{\mathbb{L}^q}\lVert u(s)\rVert_{\Do(A^{1/q})}\\ & \leq  \lVert \phi\rVert_V \lVert v(s)\rVert_{\mathbb{L}^q}\lVert u(s)\rVert^{1-\frac{2}{q}}\lVert u(s)\rVert_{V}^{\frac{2}{q}}.
\end{align*}
Therefore for each 
\begin{align*}
\int_0^t \lvert \langle u(s)\cdot &\nabla \phi(s),v(s)\rangle \rvert \,\dd s \leq  \eps\lVert\phi\rVert_{L^2(0,t;V)}^2+\frac{C}{\eps}\int_0^t  \lVert v(s)\rVert^2_{\mathbb{L}^q}\lVert u(s)\rVert^{2(1-\frac{2}{q})}\lVert u(s)\rVert_{V}^{\frac{4}{q}} \,\dd s \\ & \leq \eps\lVert\phi\rVert_{L^2(0,t;V)}^2+\frac{C}{\eps}\left(\int_0^t \lVert u(s)\rVert_{V}^{2} \,\dd s\right)^{2/q} \left(\int_0^t \lVert u(s)\rVert^2\lVert v(s)\rVert_{\mathbb{L}^q}^{\frac{2q}{q-2}}\,\dd s \right)^{\frac{q-2}{q}}\\ & \leq  \eps\lVert\phi\rVert_{L^2(0,t;V)}^2+\eps' \lVert u\rVert_{L^2(0,t,V)}^2+\frac{C}{\eps^{\frac{q}{q-2}}{\eps'}^\frac{2}{q-2}}\int_0^t \lVert u(s)\rVert^{2}\lVert v(s)\rVert_{\mathbb{L}^q}^{\frac{2q}{q-2}}\,\dd s. 
\end{align*}
The relation above implies \eqref{estimate 1 lemma tecnico} and \eqref{estimate 2 lemma tecnico}. The proof of  \eqref{estimate 4 lemma tecnico} and \eqref{estimate 3 lemma tecnico} is analogous and we omit the details.
\end{proof}
\subsection{Stochastic convolutions with fractional noise}\label{sec: stoch conv frac noise}
\begin{definition}
Let $U$ be a separable Hilbert space.~A $U$-cylindrical fractional Brownian motion $(W^\hurst(t))_{t\geq 0}$ with Hurst index $\mathcal{H}\in(0,1)$ is defined by the formal series 
\[ W^\hurst(t)=\sum\limits_{n=1}^\infty b^\hurst_n(t) e_n, \]
where $\{e_n\}$ is an orthonormal basis in $U$ and $(b^\hurst(t))_{n\in\mathbb{N}}$ is a sequence of independent standard one-dimensional fractional Brownian motions, i.e.~$\E[b^\hurst_n(t)]=0$ and 
\[ \E[b^\hurst_n(t)b^\hurst_n(s)]=\frac{1}{2} (t^{2\hurst}+s^{2\hurst}-|t-s|^{2\hurst}),~~ s,t\geq 0.\]

\end{definition}
For $\hurst=1/2$ one obtains a cylindrical Brownian motion. However for $\hurst\neq 1/2$ the fbm exhibits a totally different behaviour, in particular is neither Markov nor a semimartingale.

For our aims in \Cref{regularity stokes}, we need the following results on the regularity of stochastic convolutions established in~\cite[Corollary 3.1]{Maslowski} and~\cite[Proposition 11.6]{Maslowski2}. 
\begin{lemma}{\em(\cite[Corollary 3.1]{Maslowski} and~\cite[Proposition 11.6]{Maslowski2})}\label{reg:stochconv}
    Let $A$ be the generator of an analytic $C_0$-semigroup $(S(t))_{t\geq 0}$ on a separable Hilbert space $U_1$, $\Phi\in \mathscr{L}(U,U_1)$. Assume that 
    \begin{align}\label{reg:m}
    \|S(t)\Phi\|_{\calL_2 (U,U_1)}\leq t^{-\gamma} \text{ for } \gamma<\hurst.
    \end{align}
    Then the stochastic convolution $\int_0^t S(t-s)\Phi ~\dd W^\hurst(s)$ has~$\P$-a.s.~$\gamma_1$-H\"older continuous trajectories in $\Do(A^{\gamma_2})$, for $0\leq \gamma_1+\gamma_2<\hurst-{\gamma}$. If $\Phi\in \calL_2(U,U_1)$, then the assumption~\eqref{reg:m}  is satisfied for $\gamma=0$. 
\end{lemma}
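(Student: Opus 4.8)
The integrand $s\mapsto S(t-s)\Phi$ is deterministic and operator-valued, so the convolution is a Wiener integral against $W^\hurst$; my plan is to combine the second-moment isometry for such integrals with the factorization method (cf.\ \cite{Maslowski}). For $\hurst>\tfrac12$, which covers the range $\hurst>\tfrac34$ actually used, a deterministic $\Psi\in L^{1/\hurst}(0,T;\mathcal{L}_2(U,U_1))$ satisfies the Gaussian identity
\[
\E\Big\|\int_0^T\Psi(s)\,\dd W^\hurst(s)\Big\|_{U_1}^2
=\hurst(2\hurst-1)\int_0^T\!\!\int_0^T\langle\Psi(s),\Psi(r)\rangle_{\mathcal{L}_2(U,U_1)}|s-r|^{2\hurst-2}\,\dd s\,\dd r,
\]
and by the Hardy--Littlewood--Sobolev inequality this is dominated by $c_\hurst\big(\int_0^T\|\Psi(s)\|_{\mathcal{L}_2(U,U_1)}^{1/\hurst}\,\dd s\big)^{2\hurst}$. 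Applying this to $\Psi(s)=S(t-s)\Phi$ and using $\|S(r)\Phi\|_{\mathcal{L}_2}\le r^{-\gamma}$ with $\gamma<\hurst$ gives $\int_0^t(t-s)^{-\gamma/\hurst}\,\dd s<\infty$, so $Z(t):=\int_0^t S(t-s)\Phi\,\dd W^\hurst(s)$ is a well-defined $U_1$-valued Gaussian variable.

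Next I would factorize. Fixing $\alpha\in(0,1)$ and using $\int_s^t(t-\sigma)^{\alpha-1}(\sigma-s)^{-\alpha}\,\dd\sigma=\pi/\sin(\pi\alpha)$ together with the semigroup law and a stochastic Fubini theorem for Wiener integrals, one rewrites
\[
Z(t)=\frac{\sin\pi\alpha}{\pi}\int_0^t(t-\sigma)^{\alpha-1}S(t-\sigma)\,Y_\alpha(\sigma)\,\dd\sigma,
\qquad
Y_\alpha(\sigma)=\int_0^\sigma(\sigma-s)^{-\alpha}S(\sigma-s)\Phi\,\dd W^\hurst(s).
\]
To control $Y_\alpha$ in $\Do(A^{\gamma_2})$ I would use analyticity: splitting $S(r)=S(r/2)S(r/2)$ and using $\|A^{\gamma_2}S(r/2)\|_{\mathcal{L}(U_1)}\lesssim r^{-\gamma_2}$ yields $\|A^{\gamma_2}S(r)\Phi\|_{\mathcal{L}_2}\lesssim r^{-\gamma-\gamma_2}$. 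Feeding $\Psi(s)=(\sigma-s)^{-\alpha}A^{\gamma_2}S(\sigma-s)\Phi$ into the moment bound gives
\[
\E\|A^{\gamma_2}Y_\alpha(\sigma)\|_{U_1}^2\lesssim\Big(\int_0^\sigma(\sigma-s)^{-(\alpha+\gamma+\gamma_2)/\hurst}\,\dd s\Big)^{2\hurst},
\]
which is finite and bounded uniformly on $[0,T]$ as soon as $\alpha+\gamma+\gamma_2<\hurst$. Since $Y_\alpha(\sigma)$ is Gaussian, all $L^p(\Omega)$-norms are finite and mutually comparable, so $Y_\alpha\in L^p(0,T;\Do(A^{\gamma_2}))$ $\P$-a.s.\ with finite $p$-th moment for every $p<\infty$.

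The remaining step is deterministic. Since $A^{\gamma_2}$ commutes with $S(\cdot)$ and $\Do(A^{\gamma_2})$ is $S$-invariant, the factorization operator $R_\alpha f(t)=\int_0^t(t-\sigma)^{\alpha-1}S(t-\sigma)f(\sigma)\,\dd\sigma$ maps $L^p(0,T;\Do(A^{\gamma_2}))$ into $C^{\gamma_1}([0,T];\Do(A^{\gamma_2}))$ whenever $\gamma_1<\alpha-\tfrac1p$; this is the classical smoothing estimate behind the factorization method. Taking $p$ large and $\alpha$ in the interval $(\gamma_1+\tfrac1p,\,\hurst-\gamma-\gamma_2)$ — nonempty precisely when $\gamma+\gamma_1+\gamma_2<\hurst$, which reduces to the stated $\gamma_1+\gamma_2<\hurst$ in the case $\gamma=0$ — shows that $Z=\tfrac{\sin\pi\alpha}{\pi}R_\alpha Y_\alpha$ has $\P$-a.s.\ $\gamma_1$-Hölder continuous trajectories in $\Do(A^{\gamma_2})$. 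The last assertion is immediate: if $\Phi\in\mathcal{L}_2(U,U_1)$ then $\|S(t)\Phi\|_{\mathcal{L}_2}\le\big(\sup_{[0,T]}\|S(t)\|_{\mathcal{L}(U_1)}\big)\|\Phi\|_{\mathcal{L}_2}$ is bounded, so the hypothesis holds with $\gamma=0$.

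I expect the main obstacle to be the exponent bookkeeping that makes the choice of $\alpha$ possible: the fractional moment bound forces $\alpha<\hurst-\gamma-\gamma_2$ while the deterministic Hölder estimate forces $\alpha>\gamma_1+\tfrac1p$, and one must verify these are jointly satisfiable, which is exactly the admissibility condition. The secondary technical point is the rigorous justification of the stochastic Fubini step in the fractional Wiener-integral setting; for $\hurst>\tfrac12$ this reduces to the double-integral isometry above, whereas the general $\hurst\in(0,1)$ case would instead require the representation of the fBm Wiener integral through its kernel operator.
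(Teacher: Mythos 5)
The paper does not prove this lemma: it is imported verbatim as \cite[Corollary 3.1]{Maslowski} and used as a black box in \autoref{regularity stokes}, so there is no internal proof to compare against. Judged on its own, your factorization argument is essentially the standard proof of such results (and the route taken in the cited literature), and it is sound in the regime $\hurst>\frac12$ you restrict to, which is all that is needed since \autoref{ass:ass_fractional} imposes $\hurst>\frac34$: the covariance identity plus Hardy--Littlewood--Sobolev gives the $L^{1/\hurst}$-type second-moment bound, the factorization identity involving $Y_\alpha$ transfers the problem to the deterministic smoothing estimate for $R_\alpha$, and Gaussianity upgrades second moments to $p$-th moments so that $p$ can be taken large enough for the H\"older exponent. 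Your exponent bookkeeping, which you correctly identify as the crux, is right.

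One point worth flagging: your proof yields the condition $\gamma+\gamma_1+\gamma_2<\hurst$, whereas the lemma as transcribed in the paper asserts $\gamma_1+\gamma_2<\hurst$ with no reference to $\gamma$. Your version is the correct one in general --- if $\|S(t)\Phi\|_{\mathcal{L}_2(U_1,U)}$ genuinely blows up like $t^{-\gamma}$, the attainable space-time regularity must be penalized by $\gamma$ --- and the paper's looser transcription is harmless only because the lemma is applied exclusively through its last sentence, i.e.\ with $\Phi$ Hilbert--Schmidt and $\gamma=0$, where the two conditions coincide; this is exactly how \autoref{regularity stokes} uses it, with $\Phi=A^{\frac14-\frac{s+\eps}{2}}\D g$. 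The two soft spots you name yourself are real but minor: the stochastic Fubini step for deterministic integrands with $\hurst>\frac12$ can be justified by computing covariances of both sides (or by approximation with step functions), and the smoothing estimate sending $L^p(0,T;\Do(A^{\gamma_2}))$ into $C^{\gamma_1}([0,T];\Do(A^{\gamma_2}))$ for $\gamma_1<\alpha-\frac1p$ is classical. So the proposal is correct, with the caveat that it proves the (correctly $\gamma$-dependent) statement rather than the one literally transcribed in the paper.
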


\section{Global well-posedness}\label{sec well posed}
Here we prove \Cref{t:global}\eqref{it:global1}. This section is organized as follows. Firstly, in \Cref{sec regularity mild stokes} we prove that the solution $w_g$ of the 2D Stokes equations with boundary noise \eqref{Linear stochastic} satisfies \eqref{eq:pathwise_regularity_wg_intro}.
Secondly, in \Cref{subsec: nonlinear aux}, we prove the existence of a $q$-solution to \eqref{eq:NS_fractional_noise} by studying the auxiliary Navier-Stokes problem \eqref{eq:modified_intro} for a given forcing term $w=w_g$ satisfying the regularity assumption as in \eqref{eq:pathwise_regularity_wg_intro} for a given $q$. Finally, in \Cref{s:uniqueness_q_solution}, we prove the uniqueness of solutions to \eqref{eq:NS_fractional_noise} therefore concluding the proof of \Cref{t:global}\eqref{it:global1}. Recall that ($q$-)solutions of \eqref{eq:NS_fractional_noise} are defined in \Cref{def z weak sol}.

\subsection{Stokes equations}\label{sec regularity mild stokes}
As discussed in \Cref{sec:main results}, we start by considering the linear problem \eqref{Linear stochastic}.
According to \cite{da1993evolution} and \cite[Chapter 15]{DPZ_ergodicity}, the mild solution $w_g$ of the former problem is formally given by
\begin{align}\label{Mild equation linear}
    w_g(t)=A\int_0^t S(t-s)\mathcal{D}[g]\,\dd W^\hurst(s).
\end{align}
Here $A$ is (minus) the Stokes operator with homogeneous boundary conditions as defined in \Cref{subsec:stokes operator }.

Next, we prove that $w_g$ is well-defined in sufficiently regular function spaces therefore allowing us to treat the nonlinearity in the Navier-Stokes equations.

\begin{proposition}\label{regularity stokes}
Let \Cref{ass:ass_fractional} be satisfied.~Then the process $w_g$ is well-defined, progressively measurable, and for all $T>0$ and $\varepsilon>0$,
\begin{equation}
\label{eq:v_process_regularity_H}
w_g\in L^p(\O;C([0,T];\Do(A^{\hurst-\frac{3}{4}-\frac{s}{2}-\varepsilon}))) \ \text{ for all  }p\in (1,\infty).
\end{equation}
In particular, for all $r\in (2, 2q_{\mathcal{H}})$,
\begin{equation}
\label{eq:v_process_regularity_r}
w_g\in C([0,T];\mathbb{L}^{r})\ \text{ a.s.\ }
\end{equation}
\end{proposition}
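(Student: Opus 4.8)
The plan is to realize $w_g$ as the image under the (invertible) Stokes operator $A$ of a stochastic convolution whose regularity is governed by \autoref{reg:stochconv}, and then to convert the resulting abstract smoothness into $\mathbb{L}^r$-regularity by Sobolev embedding. First I record the spatial regularity of the integrand. Since $g\in H^{-s}(\Gamma_u;U)=\calL_2(U,H^{-s}(\Gamma_u))$ by the Fubini identity of \autoref{sss:notation}, \autoref{Corollary regularity Neumann map} (equivalently, the interpolation in its proof with $\theta=\tfrac14-\tfrac s2$) gives $\mathcal{D}[g]\in\calL_2(U,\Do(A^{\beta}))$ with $\beta:=\tfrac14-\tfrac s2>0$, using $s<\tfrac12$.

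Next I analyse the convolution $Z(t):=\int_0^t S(t-s)\mathcal{D}[g]\,\dd W^\hurst(s)$ on the base space $U_1:=\Do(A^{\beta})$, on which $S(t)$ is the analytic semigroup generated by the part of $A$ and whose fractional-power domains are the tower levels $\Do(A^{\beta+\,\cdot\,})$. Because $\mathcal{D}[g]\in\calL_2(U,U_1)$, the hypothesis \eqref{reg:m} of \autoref{reg:stochconv} holds with $\gamma=0<\hurst$, so $Z$ is well-defined and, $\P$-a.s., has $\gamma_1$-Hölder paths in $\Do(A^{\beta+\gamma_2})$ for every $\gamma_1+\gamma_2<\hurst$. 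Taking $\gamma_1=0$ and $\gamma_2=\hurst-\eps$ yields $Z\in C([0,T];\Do(A^{\beta+\hurst-\eps}))$ $\P$-a.s. Since $A\colon\Do(A^{\kappa})\to\Do(A^{\kappa-1})$ is an isomorphism of tower levels, applying $A$ gives
\begin{equation*}
w_g=AZ\in C([0,T];\Do(A^{\beta+\hurst-1-\eps}))=C([0,T];\Do(A^{\hurst-3/4-s/2-\eps}))\quad\P\text{-a.s.},
\end{equation*}
which is the exponent in \eqref{eq:v_process_regularity_H}; note $\beta+\hurst-1=\hurst-\tfrac34-\tfrac s2>0$ by \autoref{ass:ass_fractional}. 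Progressive measurability follows because $Z$ is adapted (an integral over $[0,t]$) and path-continuous.

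To pass from the almost-sure statement to the $L^p(\Omega)$ bound I use that the integrand is deterministic, so $w_g$ is a centered Gaussian random element of the separable Banach space $C([0,T];\Do(A^{\hurst-3/4-s/2-\eps}))$; Fernique's theorem then forces finiteness of all moments, i.e.\ \eqref{eq:v_process_regularity_H} for every $p\in(1,\infty)$. For \eqref{eq:v_process_regularity_r} set $\theta:=\hurst-\tfrac34-\tfrac s2-\eps$; since $\hurst<1$ we have $\theta<\tfrac14$, so \autoref{lem:fractional_powers_stokes} identifies $\Do(A^{\theta})=\Hs^{2\theta,2}$. The two-dimensional Sobolev embedding gives $\Hs^{2\theta,2}\hookrightarrow\mathbb{L}^{r}$ whenever $r\leq\tfrac{2}{1-2\theta}$, and a direct computation shows $\tfrac{2}{1-2\theta}=\tfrac{2}{s+5/2-2\hurst+2\eps}\uparrow 2\qstar$ as $\eps\downarrow0$. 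Hence for any $r\in(2,2\qstar)$ one fixes $\eps$ small enough that $r<\tfrac{2}{1-2\theta}$ and concludes $w_g\in C([0,T];\mathbb{L}^{r})$ $\P$-a.s.

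The argument is essentially bookkeeping of indices; the single point carrying genuine content is that $\mathcal{D}[g]$ is truly Hilbert–Schmidt into $\Do(A^{\beta})$ rather than merely bounded, which is what lets \autoref{reg:stochconv} be applied in its $\gamma=0$ form and thereby convert the \emph{full} temporal Hurst regularity $\hurst$ into spatial smoothness. The threshold $\hurst-\tfrac s2>\tfrac34$ of \autoref{ass:ass_fractional} is exactly what makes $\theta>0$ and $\qstar>1$, i.e.\ what guarantees that $w_g$ lands in some $\mathbb{L}^r$ with $r>2$; were this margin to vanish, the subcriticality used later for the splitting of \eqref{eq:modified_intro} would be lost, so keeping the final exponent strictly positive is the delicate quantitative step.
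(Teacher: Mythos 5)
Your argument is, in substance, the paper's own proof: \autoref{Corollary regularity Neumann map} to place $\mathcal{D}[g]$ in a Hilbert--Schmidt class into a fractional power domain, \autoref{reg:stochconv} applied in its $\gamma=0$ form, commutation with powers of the (invertible) Stokes operator to shift along the Sobolev tower, and \autoref{lem:fractional_powers_stokes} plus Sobolev embedding for \eqref{eq:v_process_regularity_r}; running the convolution on the base space $\Do(A^{\beta})$ and then applying $A$ is only cosmetically different from the paper's factorization $w_g=A^{\frac34+\frac{s+\varepsilon}{2}}\int_0^{\cdot}S(\cdot-s)A^{\frac14-\frac{s+\varepsilon}{2}}\mathcal{D}g\,\dd W^{\hurst}(s)$.

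The one step that does not survive scrutiny is the claim $\mathcal{D}[g]\in\calL_2\big(U,\Do(A^{\beta})\big)$ with $\beta=\tfrac14-\tfrac s2$ \emph{exactly}. When $s=0$, which \autoref{ass:ass_fractional} allows, this is the borderline exponent $\beta=\tfrac14=\tfrac{1}{2r}$ (for $r=2$) at which \autoref{lem:fractional_powers_stokes} deliberately gives no identification: the interpolation in the proof of \autoref{Corollary regularity Neumann map} only lands $\mathcal{D}[g]$ in $\Hs^{1/2}$, whereas $\Do(A^{1/4})$ is a Lions--Magenes-type strict subspace of $H^{1/2}(\Dom;\R^2)$ encoding a boundary trace condition, so $\calL_2(U,\Hs^{1/2})\not\subset\calL_2(U,\Do(A^{1/4}))$ cannot be invoked. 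This is precisely why the paper works with the shifted exponent $\tfrac14-\tfrac{s+\varepsilon}{2}$, i.e.\ regards $g$ as an element of $H^{-s-\varepsilon}(\Gamma_u;U)$. The repair costs you nothing: take $\beta=\tfrac14-\tfrac{s+\varepsilon'}{2}$ for arbitrary $\varepsilon'>0$, carry $\varepsilon'$ through the bookkeeping, and absorb it into the final $\varepsilon$, since both \eqref{eq:v_process_regularity_H} and \eqref{eq:v_process_regularity_r} are statements for all $\varepsilon>0$ (resp.\ all $r<2\qstar$). Apart from this, your write-up is actually more complete than the published proof on two points it leaves implicit: the $L^p(\O)$ moments, which you correctly obtain from Gaussianity of $w_g$ and Fernique's theorem, and progressive measurability, which you get from adaptedness plus path continuity.
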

 \begin{proof}
Note that, thanks to \Cref{Corollary regularity Neumann map}
$$
\D g \in \mathscr{L}_2(U,\Do(A^{\frac{1}{4}-\frac{s+\varepsilon}{2}})).
$$
Hence, by \Cref{reg:stochconv}, a.s.,
$$
w_g=\underbrace{A^{\frac{3}{4}+\frac{s+\varepsilon}{2}}\underbrace{
\int_{0}^{\cdot} S(\cdot-s)A^{\frac{1}{4}-\frac{s+\varepsilon}{2}}\mathcal{D}g\,\dd W^\hurst(s)
}_{\in\, C([0,T];\Do(A^{\hurst-\frac{\varepsilon}{2}}))}}_{\in\, C([0,T];\Do(A^{\hurst-\frac{3}{4}-\frac{s}{2}-\varepsilon}))}.
$$
The arbitrariness of $\varepsilon>0$ yields \eqref{eq:v_process_regularity_H}.~To prove \eqref{eq:v_process_regularity_r}, note that, by \Cref{lem:fractional_powers_stokes},
$$
\Do(A^{\hurst-\frac{3}{4}-\frac{s}{2}-\varepsilon})\subset H^{2\hurst-\frac{3}{2}-s-2\varepsilon}(\Dom;\R^2).
$$
The above space embeds into $L^{r}(\Dom;\R^2)$ for some $r>2$ provided
$$
2\hurst-\frac{3}{2}-s-2\varepsilon>0.
$$
The above is exactly our assumption due to the arbitrariness of $\varepsilon>0$. In particular, by the arbitrariness of $\eps$ and Sobolev's embedding we can choose whatever $r<2\qstar$. \\
     \end{proof}
\begin{remark}[Necessity of the $L^p$-setting for $v$]\label{rk: necessity of Lp setting}
In the setting of \Cref{regularity stokes}, we have
$
    2\hurst-\frac{3}{2}-s<\frac{1}{2}.
$
Therefore, for all choices of $\hurst $ and $s$ in \Cref{ass:ass_fractional}, it follows that
\begin{align*}
    H^{2\hurst-\frac{3}{2}-s-2\varepsilon}(\Dom;\R^2)\not\hookrightarrow L^4(\Dom;\R^2).
\end{align*}
Thus, \eqref{eq:v_process_regularity_r} holds with $r<4$ and therefore 
    $
    B(w_g,w_g)\not \in L^2(0,T;V^*).
    $
    In particular, in the next subsection, we cannot avoid the use of $L^p$-setting in space, cf. the comments below \Cref{ass:ass_w}.
\end{remark}
\begin{remark}
    Previous results with white noise boundary conditions \cite{AL23_boundary_noise,Binz_BN} exploited stochastical maximal $L^p$ regularity techniques to study the linear part of the problem. Here is worth mentioning that we employed the more standard Hilbert value framework because it produces the sharpest result on the regularity of the stochastic convolution in terms of the Hurst parameter $\mathcal{H}$. Indeed, assuming just for simplicity the case $s=0$ and $g\in  L^p(\Gamma_u;U)$ for some $p\in [2,+\infty)$, then by \Cref{Corollary regularity Neumann map}, \cite[Proposition 4.5]{MaslowskiC} and arguing as above we have
    \begin{align*}
        w_g\in C([0,T];\Do(A_p^{\mathcal{H}-1+\frac{1}{2p}-\eps})).
    \end{align*}
    In particular $w_g\in C([0,T];\mathbb{L}^r) $ for some $r>2$ if $\mathcal{H}>1-\frac{1}{2p}$. Therefore the right-hand side is minimized and we can use the rougher noise for $p=2$.
\end{remark}
{We end this subsection showing a lemma concerning the relation between the mild and the weak formulation of \eqref{Linear stochastic} as defined below.
\begin{definition}\label{weak solution linear}
Let \Cref{ass:ass_fractional} be satisfied.
A stochastic process $w$ 
is a weak solution of \eqref{Linear stochastic} if it is $\mathcal{F}$-progressively measurable with $\mathbf{P}-a.s.$ paths in
\begin{align*}
    w_g \in C(0,T;\mathbb{L}^r)
\end{align*}
for some $r\geq 2$, and $ \mathbf{P}-a.s.$ for all $\phi\in \Do(A)$ and $t \in[0,T]$,
\begin{align}\label{weak formulation linear}
\langle w_g(t),\phi\rangle=-\int_0^t\langle w_g(s), A\phi\rangle \,\dd s- \langle g,\hat{n}\cdot\nabla\phi\rangle_{H^{-s}(\Gamma_u),H^{s}(\Gamma_u)}W^\hurst_t.
\end{align}
\end{definition}

As above, $\hat{n}$ denotes the exterior normal vector field on $\partial\Dom$. 
Since $g$ is time-independent, the last term in \eqref{weak formulation linear} can be rewritten as a stochastic integral as
\begin{align*}
    \langle g,\hat{n}\cdot\nabla\phi\rangle_{H^{-s}(\Gamma_u),H^{s}(\Gamma_u)}W^\hurst_t=\int_0^t \langle g,\hat{n}\cdot\nabla\phi\rangle_{H^{-s}(\Gamma_u),H^{s}(\Gamma_u)}\,\dd W^\hurst_s.
\end{align*}

\begin{lemma}\label{lemma weak solution linear equivalence}
    Let \Cref{ass:ass_fractional} be satisfied.
   There exists a unique weak solution of \eqref{Linear stochastic} in the sense of \Cref{weak solution linear} and it is given by the formula \eqref{Mild equation linear}.
\end{lemma}
\begin{proof}
We split the proof into two steps.

\smallskip

\emph{Step 1: There exists a unique weak solution of \eqref{Linear stochastic}  and it is necessarily given by the mild formula \eqref{Mild equation linear}}. Let $\psi \in C^1([0,T];\Do(A))$. Arguing as in the first step of the proof of \cite[Theorem 1.7]{flandoli2023stochastic}, see also \cite[Lemma 3]{LUONGO2024}, one can readily check that $w_g$ satisfies 
\begin{align}\label{weak formulation linear time depending test}
\langle w_g(t),\psi(t)\rangle=& \int_0^t\langle w_g(s),\partial_s\psi(s)\rangle \,\dd s-\int_{0}^{t}\langle w_g(s),A\psi(s)\rangle \,\dd s\notag \\ & -\int_0^t \langle g,\hat{n}\cdot\nabla\psi(s)\rangle_{H^{-s}(\Gamma_u),H^{s}(\Gamma_u)} \,\dd W^{\hurst}_s
\end{align}
for each $t \in[0,T],\ \mathbf{P}-a.s.$
The stochastic integral in the relation above is well-defined as a real-valued stochastic integral. Indeed, recalling that $(W^\hurst_t)_{t\geq 0}$ is a $U$-cylindrical fractional Brownian motion we observe that $\langle g,\hat{n}\cdot\nabla\psi(s)\rangle_{H^{-s}(\Gamma_u),H^{s}(\Gamma_u)} $ is given by the linear operator on  $U$
\[ h'\mapsto \langle g h',\hat{n}\cdot\nabla\psi(s)\rangle_{H^{-s}(\Gamma_u),H^{s}(\Gamma_u)} =L_{\psi}(gh'),   \]
where $L_{\psi}:=\langle \cdot,  \hat{n} \cdot \nabla \psi(\cdot)\rangle_{H^{-s}(\Gamma_u),H^{s}(\Gamma_u)}. $ By the ideal property of the Hilbert-Schmidt operators we have $\calL_2(U, H^{-s}(\Gamma_u))=H^{-s}(\Gamma_u;U)$  and obtain that 
\[ \|  \langle g,\hat{n}\cdot\nabla\psi(s)\rangle_{H^{-s}(\Gamma_u),H^{s}(\Gamma_u)} \|_{U^*} \lesssim \|g\|_{H^{-s}(\Gamma_u)} \|\nabla \psi(s)\|_{H^{s}(\Gamma_u)}~~\text {a.e. on } \Omega\times(0,T). \]
In conclusion, the stochastic integral in~\eqref{weak formulation linear time depending test} is well-defined as a real-valued one (see~\cite[(2.16)]{Maslowski}), since
\begin{align*}
    &\E \Big | \int_0^t \langle g, \hat{n}\cdot \nabla \psi(s) \rangle_{H^{-s}(\Gamma_u), H^{s}(\Gamma_u)} ~\dd W^\hurst_s \Big|^2 \\
    &\leq \hurst(2\hurst-1) \int_0^t \int_0^t \| \langle g, \hat{n}\cdot \nabla \psi(s)  \rangle\|_{U^*}  
  \|  \langle g, \hat{n}\cdot \nabla \psi(v)  \rangle \|_{U^*}
    |s-v|^{2\hurst-2}~\dd s~\dd v\\
    & \leq \hurst(2\hurst-1) \|g\|^2_{H^{-s}(\Gamma_u;U)} \int_0^t\int_0^t  \| \nabla \psi(s)\|_{H^{s}(\Gamma_u)} \|\nabla \psi(v)\|_{H^{s}(\Gamma_u)}|s-v|^{2\hurst-2}~\dd s~\dd v,  
\end{align*}
which is finite since $\psi \in C^1([0,T];\Do(A))$ and $\hurst>1/2$. 
Now consider $\phi\in \Do(A^2)$ and use $\psi_t(s)=S(t-s)\phi,\ s\in [0,t]$ as test function in \eqref{weak formulation linear time depending test}  obtaining
\begin{align}\label{weak formulation linear time depending test 2}
\langle w_g(t),\phi\rangle= & -\int_0^t \langle g,\hat{n}\cdot\nabla S(t-s)\phi\rangle_{H^{-s}(\Gamma_u),H^{s}(\Gamma_u)} \,\dd W^\hurst_s. 
\end{align}
Recalling the definition of the Dirichlet map $\mathcal{D}$, \eqref{weak formulation linear time depending test 2} can be rewritten as 
\begin{align}\label{weak formulation linear time depending test 3}
\langle w(t),\phi\rangle= & \int_0^t \langle \mathcal{D}[g],A S(t-s)\phi\rangle \,\dd W^\hurst_s.
\end{align}
Then, exploiting the self-adjointness property of $S$ and $A$ we have that weak solutions of \eqref{Linear stochastic} satisfy the mild formulation. Therefore they are unique.

\smallskip

\emph{Step 2: The mild formula \eqref{Mild equation linear} is a weak solution of \eqref{Linear stochastic} in the sense of \Cref{weak solution linear}}. We begin by noticing that $w_g$ has the required regularity due to \Cref{regularity stokes}. Let us test our mild formulation \eqref{Mild equation linear} against functions $\phi\in \Do(A^2)$. It holds, exploiting self-adjointness property of $S$ and $A$  \begin{align*}
\langle w(t),\phi\rangle &= \int_0^t \langle \mathcal{D}[g],AS(t-s)\phi\rangle \,\dd W^\hurst_s\\ & =\int_0^t \langle g,\hat{n}\cdot\nabla S(t-s)\phi\rangle_{H^{-s}(\Gamma_u),H^{s}(\Gamma_u)} \,\dd W^\hurst_s\quad \mathbf{P}-a.s.,
\end{align*}
where in the last step we used the definition of Dirichlet map. To complete the proof of this step it is enough to show that 
\begin{align}\label{relation to be verified}
    \int_0^t \langle g,\hat{n}\cdot\nabla S(t-s)\phi\rangle_{H^{-s}(\Gamma_u),H^{s}(\Gamma_u)} \,\dd W^\hurst_s=-\int_0^t \langle w_g(s),A\phi\rangle \,\dd s\notag&\\ 
    + \langle g,\hat{n}\cdot\nabla\phi\rangle_{H^{-s}(\Gamma_u),H^{s}(\Gamma_u)} \, W^\hurst_t \quad \mathbf{P}-a.s. &
\end{align}
The relation \eqref{relation to be verified} is true. Indeed,
\begin{align}\label{weak formulation pre fubini}
    \int_0^t \langle w_g(s),A\phi\rangle \,\dd s&=\int_0^t \,\dd s \int_0^s \langle \mathcal{D}[g],S(s-\tau)A^2 \phi \rangle \,\dd W^\hurst(\tau)\quad \mathbf{P}-a.s.
\end{align}
The double integrals in \eqref{weak formulation pre fubini} can be exchanged via stochastic Fubini's Theorem, see \cite{Nualart1,AlosNualart}.
Therefore the double integral in the right-hand side of \eqref{weak formulation pre fubini} can be rewritten as 
\begin{align*}
    &\int_0^t \,\dd s \int_0^s \langle \mathcal{D}[g],S(s-\tau)A^2 \phi \rangle \,\dd W^\hurst_\tau\\
    &\qquad =\int_0^t \,\dd W^\hurst(\tau) \int_\tau^t  \langle \mathcal{D}[g],S(s-\tau)A^2 \phi \rangle \,\dd s\\ 
    &\qquad = \langle \mathcal{D}[g],A \phi \rangle  W^\hurst_t -\int_0^t \langle \mathcal{D}[g],AS(t-\tau)\phi \rangle \,\dd W^\hurst_\tau\\ 
    &\qquad  = \langle g, \hat{n}\cdot\nabla\phi \rangle_{H^{-s}(\Gamma_u),H^{s}(\Gamma_u)} \, W^\hurst_t\\ 
    & \qquad  -\int_0^t \langle g,\hat{n}\cdot\nabla S(t-\tau)\phi \rangle_{H^{-s}(\Gamma_u),H^{s}(\Gamma_u)} \,\dd W^\hurst_\tau\quad \mathbf{P}-a.s.
\end{align*}
Inserting this expression in \eqref{weak formulation pre fubini}, \eqref{relation to be verified} holds and the proof is complete.
\end{proof}}

\subsection{Auxiliary Navier-Stokes type equations}\label{subsec: nonlinear aux}

Motivated by the auxiliary problem \eqref{eq:modified_intro} and by the results of the previous subsection, here we study the well-posedness of the following abstract PDE
\begin{equation}
\label{eq:modified_NS}
\left\{
\begin{aligned}
\partial_t v &+ A_{q} v + B(v+w,v+w)=0, \qquad t\in [0,T],\\
v(0)&=\uin, 
\end{aligned}
\right.
\end{equation}
with $A_q$ is the Stokes operator on $\Ls^q$ and $B$ is the bilinear nonlinearity as in \Cref{ss:deterministic_NS}. Finally, $q=r/2$ and $(w,r)$ satisfies the following 

\begin{assumption}
\label{ass:ass_w}
$
w\in C([0,T];\mathbb{L}^r)
$ for some $r\in (2,4)$. 
\end{assumption}

Note that the above assumption is satisfied $\P-a.s.$ with $w=w_g$, as it follows from  \Cref{regularity stokes}. Moreover, the limitation $r<4$ is motivated by \Cref{rk: necessity of Lp setting}.
In particular, the arguments used in \cite{AL23_boundary_noise} do not apply to \eqref{eq:modified_NS}.
Indeed, if \Cref{ass:ass_w} holds, then
$$
B(w,w)\not\in L^2(0,T;H^{-1}(\Dom;\R^2)).
$$ 
Hence, the (potential) energy of solutions for \eqref{eq:modified_NS}, i.e.\
$$
\int_{0}^t \int_{\Dom} |\nabla v|^2\,\dd x \,\dd s \ \ \text{ for }\ t>0,
$$ 
is \emph{ill-defined} even in absence of the terms $B(v,v),B(w,v)$ and $B(v,w)$. In particular, one cannot expect energy (or Leray's) type solutions for \eqref{eq:modified_NS} to be defined and the analysis carried on in \cite{AL23_boundary_noise} does not work in our framework. 
However, $B(v,v)\in L^{\infty}(0,T;H^{-1,q}(\Dom;\R^2))$ for some $q>1$ as $r>2$, and therefore $L^{q}$-theory for \eqref{eq:modified_NS} can be built.

\smallskip 

Next, let us describe the main idea behind our construction of a solution to \eqref{eq:modified_NS}. In what follows, the subcriticality of $L^\infty(0,T;\mathbb{L}^r)$ with $r>1$ for the 2D Navier-Stokes equations (cf. the discussion below \Cref{def z weak sol}) plays a central role. 
Indeed, by subcriticality, given $q_0=\frac{r}{2}$, the solution $v_0$ to
$$
\left\{
\begin{aligned}
\partial_t v_0&+A_{q_0} v_0+ B(w,w)=0,\\
 v_0(0)&=0,
\end{aligned}
\right.
$$
satisfies $v_0\in L^p(0,T;L^{r_0}(\Dom;\R^2))$ for some $r_0>r$ and each $p<+\infty$. Hence, we obtained a small gain of space regularity. 
In particular, $\overline{v}_1=v-v_0$ solves
$$
\left\{
\begin{aligned}
\partial_t \overline{v}_1&+A_{q_1} \overline{v}_1+ B(\overline{v}_1,\overline{v}_1)+B(\overline{v}_1,w+v_0)+B(w+v_0,\overline{v}_1)\\
&\qquad\qquad\qquad + B(v_0,w+v_0)+B(w,v_0)=0,\\
 \overline{v}_1(0)&=\uin,
\end{aligned}
\right.
$$
In the above, we would like to take $q_1>q_0$ due to the increased regularity of the forcing terms. Indeed, as  
$v_0\in L^p(0,T;L^{r_0}(\Dom;\R^2))$ for some $r_0>r$ and each $p<+\infty$ one obtains that the terms $B(w+v_0,v_0)$ and $B(w,v_0)$ belong to $L^p(0,T;H^{-1,q_1}(\Dom;\R^2))$ where $\frac{1}{q_1}=\frac{1}{r_0}+\frac{1}{r}$ satisfies $q_1>q_0$. In particular, the terms appearing in the problem above are more regular in space than $B (w,w)$. This opens the door to a further iteration. 
In particular, by considering the solution $v_1$ to
$$
\left\{
\begin{aligned}
\partial_t v_1&+A_{q_1} v_1 + B(v_0,w+v_0)+B(w,v_0)=0,\\
 v_1(0)&=0,
\end{aligned}
\right.
$$
and studying the problem for $\overline{v}_2=\overline{v}_1-v_1$, one can check that the above procedure leads to a further improvement. The idea is to stop the iteration whenever the forcing terms appearing in the procedure are regular enough to build Leray-type solutions to the corresponding PDE. 
Before going further, let us stress that the above procedure is reminiscent of the so-called `DaPrato-Debussche trick' introduced in \cite{da2002two} and now is widely used in the context of stochastic PDEs. 

\smallskip

Let us now turn to the construction of a solution to \eqref{eq:modified_NS}. The above argument motivates the following splitting. Let $N$ be a positive integer such that 
\begin{align}\label{definition of N}
    r\in \left[\frac{2(N+2)}{N+1},\frac{2(N+1)}{N}\right)
\end{align}
then we search of a solution $v$ to \eqref{eq:modified_NS} given by a sum of $N+1$ terms
\begin{align}
\label{eq:v_splitting_proof}
    v=\sum_{i=0}^{N-1} v_i+\overline{v}
\end{align}
where $v_i$ and $\overline{v}$ solve the following system of PDEs on $[0,T]$:
\begin{align}\label{system PDE}
    \left\{
    \begin{aligned}
    \partial_t v_0&+A_{q_0} v_0+B(w,w)=0,\\
    \partial_t v_i&+A _{q_i} v_i+B\Big(v_{i-1},w+\sum_{j=0}^{i-1} v_j\Big)+B\Big(w+\sum_{j=0}^{i-2} v_j,v_{i-1}\Big)=0,\\
    \partial_t \overline{v}&+A \overline{v}+B(\overline{v},\overline{v})+B\Big(\overline{v},w+\sum_{j=0}^{N-1}v_j\Big)+B\Big(w+\sum_{j=0}^{N-1}v_j, \overline{v}\Big)\\ &+B\Big(v_{N-1},w+\sum_{j=0}^{N-1} v_j\Big)+B\Big(w+\sum_{j=0}^{N-2} v_j,v_{N-1}\Big)=0,\\
    v_i(0)&=0,\\
    \overline{v}(0)&=\uin,
    \end{aligned}
    \right.
\end{align}
where, $\sum_{j=0}^{-1} :=0$, $i\in\{1,\dots,N-1\}$ and
\begin{align}\label{def qi}
    q_i=\frac{2r}{r+2+(i+1)(2-r)}.
\end{align}
Note that $v_i$ for $i\in\{1,\dots,N-1\}$ solves a (linear) Stokes problem, while the problem for $\overline{v}$ is a modified version of the Navier-Stokes equations.

At least formally, it is clear that $v$ solves \eqref{eq:modified_NS}. The latter fact is a straightforward consequence of the following identity involving $B(v+w,v+w)$ (letting $v_{-1}=w,\ v_N:=\overline{v}$ for simplicity)
\begin{align*}
B(v+w,v+w)&=\sum_{i,j=-1}^N B(v_i,v_j)\\ &
=
\sum_{i=-1}^N B(v_i,v_i)+
\sum_{i=-1}^{N}\sum_{j=-1}^{i-1} B(v_{i},v_j)
+
\sum_{j=-1}^{N}\sum_{i=-1}^{j-1} B(v_i,v_{j})
\\
& {= B(\overline{v},\overline{v}) +  \sum_{i=-1}^{N-1} B(v_i,v_i)+\sum_{i=-1}^{N-1}B(\overline{v},v_i) + \sum_{i=-1}^{N-1}\sum_{j=-1}^{i-1} B(v_{i},v_j)}\\ &{+\sum_{i=-1}^{N-1}B(v_i,\overline{v})
+
\sum_{j=-1}^{N-1}\sum_{i=-1}^{j-1} B(v_i,v_{j}) }\\
&=
B(\overline{v},\overline{v})+\sum_{i=-1}^{N-1}B(v_i,\overline{v})+\sum_{i=-1}^{N-1}B(\overline{v},v_i)\\ &
+\sum_{i=-1}^{N-1}\sum_{j=-1}^{i} B(v_{i},v_j)
+
\sum_{j=-1}^{N-1}\sum_{i=-1}^{j-1} B(v_i,v_{j})\\ &= \left[B(\overline{v},\overline{v})+B\left(w+ \sum_{j=0}^{N-1} v_i,\overline{v}\right)+B\left(\overline{v},w+\sum_{j=0}^{N-1} v_j\right)\right.\\ &\left. \quad\quad + B\left(v_{N-1},w+\sum_{j=0}^{N-1}v_j\right)+B\left(w+\sum_{j=0}^{N-2}v_j,v_{N-1}\right)\right]\\ & +\sum_{i=-1}^{N-2}\left[B\left(v_{i},w+\sum_{j=0}^{i}v_j\right)+B\left(w+\sum_{j=0}^{i-1}v_{j},v_i\right)\right].
\end{align*}
Looking at the last line, we can formally identify the first bracketed term as the one appearing in the equation for $\overline{v}$ and the $i$-th summand as the one appearing in the equation for $v_{i+1},\ i+1\in \{0,1,\dots, N-1\}.$ To show rigorously that $v$ given in \eqref{eq:v_splitting_proof} with $(v_0,\dots,v_{N-1},\overline{v})$ solving \eqref{system PDE} is a solution to \eqref{eq:modified_NS} we need to check that $v_0,\dots,v_{N-1}$ and $\overline{v}$ are sufficiently regular. 
The appropriate regularity class for $v$ in \eqref{eq:v_splitting_proof} to obtain a solution is given in the following definition, see also \Cref{rmk: integrability} below.

\begin{definition}
\label{def: solution system PDE}
Given $r\in (2,4)$, $N$ given by \eqref{definition of N}, $p\geq 2^N\frac{r}{r-2}$, we say that 
\begin{align*}
(v_0,\dots,v_{N-1},\overline{v})  
\end{align*} is a $(p,r)$-solution of \eqref{system PDE} if
\begin{equation}
    \begin{aligned}\label{regularity v_i v}
    v_i&\in W^{1,{p/2^i}}(0,T;X_{-1/2,A_{q_i}})\cap L^{{p/2^i}}(0,T;X_{1/2, A_{q_i}}),\\
     \overline{v}&\in C([0,T];H)\cap L^2(0,T;V),
\end{aligned}
\end{equation}
where $q_i$ is as in \eqref{def qi},
and for each
\begin{align*}
    (\phi_0,\dots,\phi_{N-1},\overline{\phi})\quad \text{s.t. } \phi_i\in \Do(A_{q_i^{\prime}}),\ \overline{\phi}\in \Do(A)
\end{align*} 
we have, for all $t\in [0,T]$,
\begin{align}
    \langle v_0(t),\phi_0\rangle&=-\int_0^t \langle v_0(s), A_{q_0^{\prime}}\phi_0\rangle \,\dd s+\int_0^t\langle w(s)\otimes w(s),\nabla \phi_0\rangle \,\dd s \label{weak v0},\\
     \langle v_i(t),\phi_i\rangle&=-\int_0^t \langle v_i(s), A_{q_i^{\prime}}\phi_i\rangle \,\dd s+\int_0^t\langle v_{i-1}(s)\otimes v_{i-1}(s),\nabla \phi_i\rangle \,\dd s\label{weak vi}\\ & +\int_0^t \langle v_{i-1}(s)\otimes \left(w(s)+\sum_{j=0}^{i-2} v_j(s)\right),\nabla \phi_i\rangle \,\dd s \notag\\ &+\int_0^t \langle  \left(w(s)+\sum_{j=0}^{i-2} v_j(s)\right)\otimes v_{i-1}(s),\nabla \phi_i\rangle \,\dd s, \notag\\
     \langle \overline{v}(t),\overline{\phi}\rangle &=\langle \uin,\overline{\phi}\rangle-\int_0^t \langle \overline{v}(s), A\overline{\phi}\rangle \,\dd s+\int_0^t \langle\overline{v}(s)\otimes \overline{v}(s),\nabla\overline{\phi}\rangle \,\dd s \label{weak vbar}\\ & +\int_0^t \langle \overline{v}(s)\otimes \left(w(s)+\sum_{j=0}^{N-1}v_j(s)\right),\nabla\overline{\phi}\rangle \,\dd s\notag\\
     & +\int_0^t \langle \left(w(s)+\sum_{j=0}^{N-1}v_j(s)\right)\otimes \overline{v}(s),\nabla\overline{\phi}\rangle \,\dd s\notag\\ 
     &+\int_0^t \langle v_{N-1}(s)\otimes v_{N-1}(s),\nabla \overline{\phi}\rangle \,\dd s \notag\\ & +\int_0^t \langle v_{N-1}(s)\otimes\left(w(s)+\sum_{j=0}^{N-2}v_j(s)\right), \nabla\overline{\phi}\rangle \,\dd s\notag\\ & 
     +\int_0^t \langle \left(w(s)+\sum_{j=0}^{N-2}v_j(s)\right)\otimes v_{N-1}(s), \nabla\overline{\phi}\rangle \,\dd s.\notag
\end{align}
\end{definition}
\begin{remark}\label{rmk: integrability}
We observe that $q_i>1$ for all $i\in \{0,\dots, N-1\}$ and is increasing in $i$. As an immediate consequence of \Cref{def: solution system PDE}, Sobolev embedding theorem and interpolation we have that
\begin{align*}
 v_i\in L^{p/2^{i}}(0,T;\mathbb{L}^{r_i})\cap C([0,T];H),\quad r_i=\frac{2r}{(i+1)(2-r)+2}.  
\end{align*}
In particular, $r_i>2$ for all $i\in \{0,\dots, N-1\}$ and is increasing in $i$. Therefore one can easily check that all the duality pairings in \Cref{def: solution system PDE} are well defined. 
Moreover,  for all $i\in \{1,\dots,N-1\}$,
$$
v_i,\overline{v} \in L^{\frac{2r}{r-2}}(0,T;\mathbb{L}^r).
$$
Indeed, the above assertion for $v_i$ follows from
$p\geq  2^N \frac{r}{r-2}$. While for $\overline{v}$ we can use the standard interpolation inequality $L^2(0,T;H^1)\cap L^\infty(0,T;L^2)\subseteq L^{2/\theta}(0,T;H^{\theta})$ with $\theta=\frac{r-2}{r}\in (0,1)$ and the Sobolev embedding $H^\theta(\Dom)\embed L^r(\Dom)$.

In particular, if $(v_0,\dots,v_{N-1},\overline{v})$ is a $(p,r)$ solution of \eqref{system PDE}, then, given $v:=\sum_{i=0}^{N-1}v_i+\overline{v}$, $u=v+w$ is a $r/2$-solution of \eqref{eq:NS_fractional_noise} in the sense of \Cref{def z weak sol}.
\end{remark}

The following yields the well-posedness of \eqref{eq:modified_NS} in the sense of \Cref{def: solution system PDE}.

\begin{theorem}\label{thm: auxiliary splitting}
Let \Cref{ass:ass_w} be satisfied.
For each $\uin\in H$, $p\geq 2^N\frac{r}{r-2}$ there exists a unique $(p,r)$-solution $(v_0,\dots,v_{N-1},\overline{v})$ of \eqref{system PDE} in the sense of \Cref{def: solution system PDE}. Moreover $\overline{v}$ satisfies the energy relation
\begin{align}\label{eq: energy relation modified}
    \lVert \ov(t)\rVert^2+2\int_0^T\lVert \nabla \ov(s)\rVert_{L^2}^2\,\dd s&=\lVert \uin\rVert^2+2\int_0^t \langle \ov(s)\cdot\nabla\ov(s),w(s)+\sum_{j=0}^{N-1}v_j(s)\rangle \,\dd s\notag\\ &+2\int_0^t \langle v_{N-1}(s)\cdot\nabla\ov(s),w(s)+\sum_{j=0}^{N-1}v_j(s)\rangle \,\dd s\notag\\ &+2\int_0^t \langle (w(s)+\sum_{j=0}^{N-2}v_j(s))\cdot\nabla\ov(s),v_{N-1}(s)\rangle \,\dd s.  
\end{align}
If $\left(\uin^n\right)_{n\in \N}$ is a sequence in $H$ converging to $\uin\in H$ and $(w^n)_{n\in \N}$ is a sequence in $C([0,T];\mathbb{L}^r)$ converging to $w\in C([0,T];\mathbb{L}^r)$, then the corresponding unique solutions $((v_0^n,\dots, v_{N-1}^n, \overline{v}^n))_{n\in \N}$ converge to the corresponding solution $(v_0,\dots, v_{N-1}, \overline{v})$, each one in the topologies of \Cref{def: solution system PDE}.
\end{theorem}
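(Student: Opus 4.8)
The plan is to solve the triangular system \eqref{system PDE} from the top down: first construct the Stokes cascade $v_0,\dots,v_{N-1}$ by deterministic maximal $L^p$-regularity, and then solve the nonlinear equation for $\overline{v}$ by a Galerkin scheme in the energy space, the two extra transport terms being controlled by \autoref{lemma tecnico}. Uniqueness and continuous dependence are then read off from the same estimates, exploiting the triangular structure.

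\emph{The linear cascade.} First I would build $v_0,\dots,v_{N-1}$ by induction on $i$, using at each step that $A_{q_i}$ has a bounded $H^\infty$-calculus (\autoref{lem:fractional_powers_stokes}), hence deterministic maximal $L^p$-regularity on the fractional scale with endpoints $X_{-1/2,A_{q_i}}$ and $X_{1/2,A_{q_i}}$. For $v_0$ the forcing is $B(w,w)$; since $w\in C([0,T];\mathbb{L}^r)$ one has $w\otimes w\in C([0,T];\mathbb{L}^{r/2})$, so $B(w,w)\in L^p(0,T;X_{-1/2,A_{q_0}})$ with $q_0=r/2$, and maximal regularity yields $v_0$ with the regularity in \eqref{regularity v_i v}. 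Assuming $v_0,\dots,v_{i-1}$ are constructed, the forcing of the $v_i$-equation is a sum of tensors $v_{i-1}\otimes(w+\sum_{j\le i-1}v_j)$ and its transpose. The least integrable factor is $w\in\mathbb{L}^r$, and the identity $\tfrac{1}{r_{i-1}}+\tfrac{1}{r}=\tfrac{1}{q_i}$ --- a direct consequence of \eqref{def qi} and \autoref{rmk: integrability} --- places the forcing in $L^{p/2^i}(0,T;X_{-1/2,A_{q_i}})$, the worst time-integrability coming from the quadratic term $v_{i-1}\otimes v_{i-1}\in L^{p/2^i}$. Maximal regularity then produces $v_i$ as in \eqref{regularity v_i v}, and Sobolev embedding together with the trace embedding of the maximal-regularity spaces (for $p$ in the admissible range) gives $v_i\in L^{p/2^i}(0,T;\mathbb{L}^{r_i})\cap C([0,T];H)$. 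This closes the induction and, by \autoref{rmk: integrability}, yields $W:=w+\sum_{j=0}^{N-1}v_j\in L^{2r/(r-2)}(0,T;\mathbb{L}^r)$.

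\emph{The nonlinear remainder.} The choice of $N$ in \eqref{definition of N} is exactly what forces $\tfrac{1}{r_{N-1}}+\tfrac{1}{r}\le\tfrac12$, so that the residual forcing $F:=B(v_{N-1},W)+B(w+\sum_{j\le N-2}v_j,\,v_{N-1})$ satisfies $F\in L^2(0,T;V^*)$, the time-integrability being guaranteed by $p\ge 2^N\tfrac{r}{r-2}$. The $\overline{v}$-equation is therefore a Navier-Stokes system with genuine nonlinearity $B(\overline{v},\overline{v})$, forcing $F\in L^2(0,T;V^*)$, and the two additional transport terms $B(\overline{v},W)$, $B(W,\overline{v})$. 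Since each $v_j$ is divergence-free, so is $W$; testing formally with $\overline{v}$ annihilates $b(\overline{v},\overline{v},\overline{v})$ and $b(W,\overline{v},\overline{v})$ and leaves the term $b(\overline{v},\overline{v},W)$ (matching the first integral in \eqref{eq: energy relation modified}) together with $\langle F,\overline{v}\rangle$. I would run this computation on a Galerkin approximation $\overline{v}^n$ and bound $\int_0^t|b(\overline{v}^n,\overline{v}^n,W)|$ by \autoref{lemma tecnico}, in the form \eqref{estimate 2 lemma tecnico} with $q=r$, choosing $\eps,\eps'$ small to absorb the $\|\overline{v}^n\|_{L^2(0,t;V)}^2$ contributions into the dissipation and leaving a term $\lesssim\int_0^t\|\overline{v}^n\|^2\,\|W\|_{\mathbb{L}^r}^{2r/(r-2)}\dd s$; the weight lies in $L^1(0,T)$, so Gronwall yields a uniform bound in $C([0,T];H)\cap L^2(0,T;V)$. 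Passing to the limit (Aubin--Lions for the quadratic term, weak convergence and the continuity from \autoref{lemma tecnico} for the linear transport terms) gives existence of $\overline{v}$ with the stated regularity, and testing the Galerkin equations with $\overline{v}^n$ and passing to the limit produces the energy identity \eqref{eq: energy relation modified}.

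\emph{Uniqueness and stability.} Both follow from the same energy method. For two solutions with identical data the difference solves a linear problem whose transport terms are again estimated via \autoref{lemma tecnico}, so Gronwall forces it to vanish; by the triangular structure, uniqueness propagates from $v_0$ up to $\overline{v}$. For the convergence statement, linearity and continuity of the maximal-regularity solution map give $v_i^n\to v_i$ in the spaces of \eqref{regularity v_i v}, hence $W^n\to W$ in $L^{2r/(r-2)}(0,T;\mathbb{L}^r)$; inserting this and $\uin^n\to\uin$ into the difference estimate for $\overline{v}$ yields $\overline{v}^n\to\overline{v}$ in $C([0,T];H)\cap L^2(0,T;V)$. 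I expect the genuine difficulty to lie in the nonlinear step, precisely in the two terms $B(\overline{v},W)$, $B(W,\overline{v})$: in contrast to \autoref{Thm deterministic well--posed}, the drift $W$ is only \emph{subcritical} (it sits in $L^{2r/(r-2)}(0,T;\mathbb{L}^r)$ but not in the energy space), so the energy estimate does not close by a naive H\"older bound and genuinely needs the $\eps,\eps'$-splitting of \autoref{lemma tecnico} to trade a small fraction of the dissipation against a Gronwall-integrable weight. The remaining delicate point is the exponent bookkeeping that lands $F$ exactly in $L^2(0,T;V^*)$ at the level $N$.
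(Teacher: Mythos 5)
Your proposal is correct in its skeleton and matches the paper on the linear cascade (induction plus deterministic maximal $L^p$-regularity, with the same exponent bookkeeping $\tfrac{1}{r_{i-1}}+\tfrac{1}{r}=\tfrac{1}{q_i}$ and the same role of \eqref{definition of N} in placing the residual forcing in $L^2(0,T;V^*)$), on uniqueness, and on continuous dependence (energy method, \autoref{lemma tecnico}, Gronwall). The genuine difference is the existence mechanism for $\ov$: you run a Galerkin scheme with Aubin--Lions compactness, whereas the paper sets up a successive-approximation scheme in which $\ov^{n+1}$ solves the \emph{full} Navier--Stokes equation \eqref{classical NS} (invoking \autoref{Thm deterministic well--posed} as a black box) with forcing $-B(\ov^{n},\tilde{v})-B(\tilde{v},\ov^{n})+\overline{f}$, i.e.\ with the transport terms lagged; it then shows the iterates are uniformly bounded and contract in $C([0,\overline{T}];H)\cap L^2(0,\overline{T};V)$ on a short interval $\overline{T}$ depending only on $\lVert\uin\rVert$ and $\lVert\tilde{v}\rVert_{L^{2r/(r-2)}(0,T;\mathbb{L}^r)}$, and covers $[0,T]$ in finitely many steps. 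Your route is global in time in one pass and needs no continuation argument; the paper's route buys strong convergence of the approximations, which makes passing to the limit in the energy identity immediate. That last point is the one place your write-up needs repair: passing to the limit in the Galerkin energy identities yields only an energy \emph{inequality} (weak lower semicontinuity of the norms under weak convergence), not the identity \eqref{eq: energy relation modified}. The identity should instead be obtained a posteriori by testing the limit equation with $\ov$ itself, which is legitimate because $\ov\in L^2(0,T;V)$ and $\partial_t\ov\in L^2(0,T;V^*)$ (all terms $B(\ov,\ov)$, $B(\ov,\tilde{v})$, $B(\tilde{v},\ov)$, $\overline{f}$ lie in $L^2(0,T;V^*)$ by \autoref{lemma tecnico} and the choice of $N$), so that $t\mapsto\lVert\ov(t)\rVert^2$ is absolutely continuous with derivative $2\langle\partial_t\ov,\ov\rangle$. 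With that standard fix, your argument goes through and proves the same statement.
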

\begin{proof}
    We exploit strongly the triangle structure of \eqref{system PDE} and split the proof in several steps.

    \smallskip
    
    \emph{Step 1: Linear part of \eqref{system PDE}}. We argue by induction and exploit maximal $L^p$ regularity techniques, see \cite[Chapter 3]{pruss2016moving}. The existence and uniqueness of $v_0$ satisfying the corresponding PDE in the sense of \Cref{def: solution system PDE} and the continuous dependence from data, i.e. $w$ in the topology of $C([0,T];\mathbb{L}^r)$, follows if  \begin{align*}
        B(w,w)\in L^p(0,T; X_{-1/2,A_{q_0}}).
    \end{align*}
    The claim is true, indeed $q_0=\frac{r}{2}$ and by H\"older's inequality we have 
    \begin{align*}
        \int_0^T \lVert B(w(s),w(s))\rVert_{X_{-1/2,A_{r/2}}}^p \,\dd s & \leq \int_0^T \lVert w(s)\rVert_{\mathbb{L}^r}^{2p} \,\dd s\leq T \lVert w\rVert_{C([0,T];\mathbb{L}^r)}^{2p}. 
    \end{align*}
    Now assume we have already proved the existence and uniqueness of $(v_i)_{i\in\{0,\dots, l-1\}},$ $ l\leq N-1$ solving \eqref{system PDE} in the sense of \Cref{def: solution system PDE} and depending continuously from the data, i.e. $w$ in the topology of $C([0,T];\mathbb{L}^r)$. Let us check that there exists a unique $v_l$ solving the corresponding PDE in \eqref{system PDE} in the sense of \Cref{def: solution system PDE} and depending continuously from $w$ in the topology of $C([0,T];\mathbb{L}^r)$. Again, due to maximal $L^p$ regularity techniques, it is enough to show that 
   \begin{align*}
        B(v_{l-1},v_{l-1})+B(v_{l-1},w+\sum_{j=0}^{l-2} v_j)-B(w+\sum_{j=0}^{l-2} v_j,v_{l-1})\in L^{p/2^l}(0,T; X_{-1/2,A_{q_l}}).
    \end{align*}
    The claim is true, indeed due to \Cref{rmk: integrability}
    \begin{align*}
        w,\ v_i\in L^{p/2^{l-1}}(0,T;\mathbb{L}^r)\quad \text{if } i\in\{0,\dots,l-2\}  
    \end{align*}
    and by induction hypothesis
    \begin{align*}
        v_{l-1}\in L^{p/2^{l-1}}(0,T;\mathbb{L}^{r_{l-1}}).
    \end{align*}
    Moreover all $v_i,\ i\in {1,\dots, l-1}$ depends continuously from $w\in C([0,T];\mathbb{L}^r)$  in the corresponding topologies.
    Therefore by H\"older's inequality we have
    \begin{align*}
        \int_0^T \lVert B(v_{l-1}&(s),w(s) +\sum_{j=0}^{l-2}v_j(s)) \rVert_{X_{-1/2,A_{q_l}}}^{p/2^l} \,\dd s\\  & \leq \int_0^T \left\lVert v_{l-1}(s)\otimes \left(w(s)+\sum_{j=0}^{l-2}v_j(s) \right)\right\rVert_{\mathbb{L}^{q_l}}^{p/2^l} \,\dd s \notag \\ & \lesssim_{p,l} \int_0^T \lVert v_{l-1}(s)\rVert_{\mathbb{L}^{r_{l-1}}}^{p/2^{l}}\left(\lVert w(s)\rVert_{\mathbb{L}^{r_{l-1}}}^{p/2^{l}}+\sum_{j=0}^{l-2}\lVert v_j(s)\rVert_{\mathbb{L}^{r_{l-1}}}^{p/2^{l}}\right) \,\dd s\\ & \lesssim \lVert v_{l-1}\rVert^{p/2^{l-1}}_{L^{p/2^{l-1}}(0,T;\mathbb{L}^{r_{l-1}})}+\lVert w\rVert^{p/2^{l-1}}_{L^{p/2^{l-1}}(0,T;\mathbb{L}^{q})}+\sum_{j=0}^{l-2}\lVert v_j\rVert^{p/2^{l-1}}_{L^{p/2^{l-1}}(0,T;\mathbb{L}^{r})}.
    \end{align*}
    
    \smallskip
    
    \emph{Step 2: Introduction to the nonlinear part of \eqref{system PDE}}. First, we observe that due to \emph{Step 1} we have that
    \begin{align}\label{eq: estimate correction linear}
    \overline{f}&=-B(v_{N-1},w+\sum_{j=0}^{N-1} v_j)-B(w+\sum_{j=0}^{N-2} v_j,v_{N-1})\in L^2(0,T;V^*),\\
    \label{eq: regularity auxiliary forcing}\Tilde{v}&=w+\sum_{j=0}^{N-1} v_j\in L^{p/2^{N-1}}(0,T;\mathbb{L}^r).
    \end{align}
    Therefore we are left to study the well-posedness in the weak setting of the following PDE
\begin{align}\label{eq: auxiliary pde}
 \left\{
    \begin{aligned}
        \partial_t \overline{v}&+A \overline{v}+B(\overline v, \overline{v})+B(\overline{v},\Tilde{v})+B(\Tilde{v},\overline{v})=\overline{f},\\
        \overline{v}(0)&=\uin.
    \end{aligned}
    \right.
\end{align}
This can be treated similarly to \cite[Section 3.2]{AL23_boundary_noise} and is the object of the remaining steps.

\smallskip

\emph{Step 3: Uniqueness}. Let $\overline{v}^{\left(  i\right)  }$ be two solutions.
The function $z=\overline{v}^{\left(  1\right)  }-\overline{v}^{\left(  2\right)  }$ satisfies
hence%
\begin{align*}
&  \left\langle z\left(  t\right)  ,\overline{\phi}\right\rangle+\int_{0}^{t}\left\langle z\left(  s\right)  ,A\overline{\phi}\right\rangle
\,\dd s -\int_{0}^{t} \langle z(s)\cdot\nabla \overline{\phi},z(s)\rangle  \,\dd s=\int_{0}^{t}\left\langle \widetilde{f}\left(  s\right)  ,\overline{\phi}\right\rangle
\,\dd s
\end{align*}
where
\[
\widetilde{f}=-B\left(  \overline{v}^{\left(  2\right)  }+\Tilde{v},z\right)  -B\left(
z,\overline{v}^{\left(  2\right)  }+\Tilde{v}\right).
\]
By \Cref{lemma tecnico}, $\widetilde{f}\in L^{2}\left(
0,T;V^*\right)  $. Then, by \Cref{Thm deterministic well--posed}%
,
\[
\lVert z\left(  t\right)  \rVert^{2}+2\int_{0}%
^{t}\lVert \nabla z\left(  s\right)  \rVert _{L^{2}}^{2}\,\dd s=2\int%
_{0}^{t} \langle z(s)\cdot\nabla z(s),\overline{v}^{\left(  2\right)  }(s)+\Tilde{v}(s)\rangle    \,\dd s.
\]
Again by \Cref{lemma tecnico}, we have%
\begin{align*}
 \int%
_{0}^{t} \langle z(s)\cdot &\nabla z(s),\overline{v}^{\left(  2\right)  }(s)+\Tilde{v}(s)\rangle    \,\dd s   \\ &
\leq \left\lvert\int%
_{0}^{t} \langle z(s)\cdot \nabla z(s),\overline{v}^{\left(  2\right)  }(s)\rangle    \,\dd s\right\rvert
+\left\lvert \int%
_{0}^{t} \langle z(s)\cdot\nabla z(s),\Tilde{v}(s)\rangle    \,\dd s \right\rvert \\
&  \leq 2\eps\lVert z\rVert_{L^2(0,t;V)}^{2}+\frac{C}{\eps^{3}}\int_0^t\lVert z(s)\rVert
^{2}\lVert \overline{v}^{\left(  2\right)  }(s)\rVert _{\mathbb{L}^4}^{4} \,\dd s\\
&  +2\eps\lVert z\rVert_{L^2(0,t;V)}^2+\frac{C}{\eps^{\frac{r+2}{r-2}}}\int_0^t \lVert z(s)\rVert^{2}\lVert \Tilde{v}(s)\rVert_{\mathbb{L}^r}^{\frac{2r}{r-2}}\,\dd s\\ 
&=4\eps \int_0^t \lVert \nabla z(s)\rVert _{L^2}^{2}\,\dd s+\frac{C}{\eps^{\frac{r+2}{r-2}}%
}\int_0^t \lVert z(s)\rVert^{2}\left(  \lVert \overline{v}^{\left(  2\right)
}(s)\rVert _{\mathbb{L}^4}^{4}+\lVert \Tilde{v}(s)\rVert _{\mathbb{L}^r}^{\frac{2r}{r-2}}\right)\,\dd s.
\end{align*}
Applying the above with $4\eps=\frac{1}{2}$ and renaming the constant $C$, it follows that
\[
\lVert z\left(  t\right)  \rVert^{2}+\int_{0}%
^{t}\lVert \nabla z\left(  s\right)  \rVert _{L^{2}}^{2}\,\dd s\leq C\int%
_{0}^{t}\lVert z\left(  s\right)  \rVert^{2}\left(   \lVert \overline{v}^{\left(  2\right)
}(s)\rVert _{\mathbb{L}^4}^{4}+\lVert \Tilde{v}(s)\rVert _{\mathbb{L}^r}^{\frac{2r}{r-2}}\right)  \,\dd s.
\]
We conclude $z=0$ by the Gr\"onwall lemma, using \eqref{eq: regularity auxiliary forcing} and
the integrability properties of $\overline{v}^{\left(  2\right)  }$.

\smallskip

\emph{Step 4: Existence}. Define the sequence $\left(  \overline{v}^{n}\right)  $ by
setting $\ov^{0}=0$ and for every $n\geq0$, given $\ov^{n}\in C\left(  \left[
0,T\right]  ;H\right)  \cap L^{2}\left(  0,T;V\right)  $, let $\ov^{n+1}$ be the
solution of equation \eqref{classical NS} with initial condition $\uin$ and
with
\[
f=-B\left(  \ov^{n},\Tilde{v}\right)  -B\left(  \Tilde{v},\ov^{n}\right)+\overline{f}.
\] In particular%
\begin{align*}
&  \left\langle \ov^{n+1}\left(  t\right)  ,\ophi\right\rangle+\int_{0}^{t}\left\langle
\ov^{n+1}\left(  s\right)  ,A\ophi\right\rangle \,\dd s -\int_{0}%
^{t}\left\langle  \ov^{n+1}\left(  s\right)  \cdot\nabla\ophi,\ov^{n+1}\left(  s\right)  \right\rangle
\,\dd s\\
&  =\left\langle \uin,\ophi\right\rangle +\int_0^t \langle f(s),\ophi\rangle\, \dd s
\end{align*}
for every $\ophi\in \Do\left(  A\right)  $. The above is well-defined as 
\[
B\left(  \ov^{n},\Tilde{v}\right)  ,B\left(  \Tilde{v},\ov^{n}\right)  ,\overline{f} \in
L^{2}\left(  0,T;V^*\right)
\]
by \Cref{lemma tecnico} and \eqref{eq: estimate correction linear}.

Then let us investigate the convergence of $\left(  \ov^{n}\right)  $. First,
let us prove a bound. From the previous identity and
\Cref{Thm deterministic well--posed} we get
\begin{align*}
&  \lVert \ov^{n+1}\left(  t\right)  \rVert^{2}+2\int%
_{0}^{t}\lVert \nabla \ov^{n+1}\left(  s\right)  \rVert _{L^{2}}%
^{2}\,\dd s\\
&  =\lVert \uin\rVert^{2}  +2\int_{0}^{t}\left(  b\left(  \ov^{n},\ov^{n+1},\Tilde{v}\right)  +b\left(
\Tilde{v},\ov^{n+1},\ov^{n}\right)  +\langle \overline{f}, \ov^{n+1}\rangle  \right)  \left(
s\right)  \,\dd s.
\end{align*}
It gives us using \Cref{lemma tecnico} and \eqref{eq: estimate correction linear}
\begin{align*}
&  \lVert \ov^{n+1}\left(  t\right)  \rVert^{2}+\int%
_{0}^{t}\lVert \nabla \ov^{n+1}\left(  s\right)  \rVert _{L^{2}}%
^{2}\,\dd s\\
&  =\lVert \uin\rVert^{2}+\eps\int_{0}^{t}\lVert
\ov^{n}\left(  s\right)  \rVert _{V}^{2}\,\dd s\\
&  +C_{\eps}\int_{0}^{t}\lVert \ov^{n}\left(  s\right)  \rVert^{2}\lVert \Tilde{v}(s)\rVert_{\mathbb{L}^r}^{\frac{2r}{r-2}}  \,\dd s+C_{\eps}\int_{0}^{t}\lVert \overline{f}\left(  s\right)
\rVert _{V^*}^{2}\,\dd s.
\end{align*}
Choosing a small constant $\eps$, one can find
$R>\lVert \uin\rVert^{2}$ and $\overline{T}$ small enough, depending only from $\lVert \uin\rVert$ and $\lVert \Tilde{v}\rVert_{L^{\frac{2r}{r-2}}(0,T;\mathbb{L}^r)}$, such that
if
\begin{equation}
\sup_{t\in\left[  0,\overline{T}\right]  }\lVert \ov^{n}\left(  t\right)  \rVert
^{2}\leq R,\qquad\int_{0}^{\overline{T}}\lVert \ov^{n}\left(  s\right)  \rVert
_{V}^{2}\,\dd s\leq R \label{fixed point R}%
\end{equation}
then the same inequalities hold for $\ov^{n+1}$.

Set $z_n=\ov^{n}-\ov^{n-1}$, for $n\geq1$. From the identity above,%
\begin{align*}
&  \left\langle z_{n+1}\left(  t\right)  ,\ophi\right\rangle -\int_{0}%
^{t}\left(  b\left(  \ov^{n+1},\ophi,\ov^{n+1}\right)  -b\left(  \ov^{n},\ophi
,\ov^{n}\right)  \right)  \left(  s\right)  \,\dd s\\
&  =-\int_{0}^{t}\left\langle z_{n+1}\left(  s\right)  ,A\ophi\right\rangle
\,\dd s-\int_{0}^{t}\left\langle \left(  B\left(  \ov^{n},\Tilde{v}\right)  -B\left(
\ov^{n-1},\Tilde{v}\right)  \right)  \left(  s\right)  ,\ophi\right\rangle \,\dd s\\
&  -\int_{0}^{t}\left\langle \left(  B\left(  \Tilde{v},\ov^{n}\right)  -B\left(
\Tilde{v},\ov^{n-1}\right)  \right)  \left(  s\right)  ,\ophi\right\rangle \,\dd s.
\end{align*}
Since%
\begin{align*}
&  b\left(  \ov^{n+1},\ophi,\ov^{n+1}\right)  -b\left(  \ov^{n},\ophi,\ov^{n}\right)
-b\left(  z_{n+1},\ophi,z_{n+1}\right) \\
&  =b\left(  \ov^{n},\ophi,z_{n+1}\right)  +b\left(  z_{n+1},\ophi,\ov^{n}\right)
\end{align*}
we may rewrite it as%
\begin{align*}
&  \left\langle z_{n+1}\left(  t\right)  ,\ophi\right\rangle -\int_{0}%
^{t}b\left(  z_{n+1}\left(  s\right)  ,\ophi,z_{n+1}\left(  s\right)  \right)
\,\dd s\\
&  =-\int_{0}^{t}\left\langle z_{n+1}\left(  s\right)  ,A\ophi\right\rangle
\,\dd s-\int_{0}^{t}\left\langle \left(  B\left(  z_{n},\Tilde{v}\right)  +B\left(
\Tilde{v},z_{n}\right)  \right)  \left(  s\right)  ,\ophi\right\rangle \,\dd s\\
&  +\int_{0}^{t}\left(  b\left(  \ov^{n},\ophi,z_{n+1}\right)  +b\left(
z_{n+1},\ophi,\ov^{n}\right)  \right)  \left(  s\right)  \,\dd s.
\end{align*}
One can check as above the applicability of
\Cref{Thm deterministic well--posed} and get
\begin{align*}
&  \lVert z_{n+1}\left(  t\right)  \rVert ^{2}+2\int%
_{0}^{t}\lVert \nabla z_{n+1}\left(  s\right)  \rVert _{L^{2}}%
^{2}\,\dd s\\
&  =2\int_{0}^{t}\left(  b\left(  z_{n},z_{n+1},\Tilde{v}\right)  +b\left(
\Tilde{v},z_{n+1},z_{n}\right)  \right)  \left(  s\right)  \,\dd s\\
&  +2\int_{0}^{t}b\left(
z_{n+1},z_{n+1},\ov^{n}\right)\left(  s\right)  \,\dd s.
\end{align*}
As above, thanks to \Cref{lemma tecnico} we deduce that
\begin{align*}
\int_0^t \lvert b\left(  z_{n+1}%
,z_{n+1},\ov^{n}\right)(s)  \rvert  \,\dd s\leq\frac{1}{4}\int_0^t \lVert z_{n+1}(s)%
\rVert _{V}^{2}\,\dd s+C\int_0^t\lVert z_{n+1}(s)\rVert^{2}\lVert
\ov^{n}(s)\rVert _{\mathbb{L}^4}^{4} \,\dd s.    
\end{align*}
But%
\begin{align*}
&\int_0^t\lvert b\left(  z_{n},z_{n+1},\Tilde{v}\right)(s)  +b\left(  \Tilde{v},z_{n+1}, z_{n}\right)(s)
\rvert \,\dd s\\&\leq\frac{1}{4}\int_0^t\lVert z_{n+1}(s)\rVert _{V}^{2}\,\dd s+\frac{1
}{8}\int_0^t\lVert z_{n}(s)\rVert _{V}^{2} \,\dd s+C\int_0^t\lVert z_{n}(s)\rVert%
^{2}\lVert \Tilde{v}(s)\rVert _{\mathbb{L}^r}^{\frac{2r}{r-2}}\,\dd s.
\end{align*}
Hence%
\begin{align*}
&  \lVert z_{n+1}\left(  t\right)  \rVert ^{2}+\int%
_{0}^{t}\lVert \nabla z_{n+1}\left(  s\right)  \rVert _{L^{2}}%
^{2}\,\dd s\\
&  \leq C\int_{0}^{t}\lVert z_{n+1}\left(  s\right)  \rVert%
^{2} \lVert \ov^{n}\left(  s\right)  \rVert _{\mathbb{L}^4}%
^{4}  \,\dd s\\
&  +\frac{1}{4}\int_{0}^{t}\lVert z_{n}\left(  s\right)  \rVert _{V}%
^{2}\,\dd s+C\int_{0}^{t}\lVert z_{n}\left(  s\right)  \rVert%
^{2}\lVert \Tilde{v}\left(  s\right)  \rVert _{\mathbb{L}^r}^{\frac{2r}{r-2}}\,\dd s.
\end{align*}
Now we work under the bounds \eqref{fixed point R} and deduce, using the Gr\"onwall
lemma, for $\overline{T}$, depending only from $\lVert \uin\rVert$ and $\lVert \Tilde{v}\rVert_{L^{\frac{2r}{r-2}}(0,T;\mathbb{L}^r)}$, possibly smaller than the previous one,%
\begin{align*}
&  \sup_{t\in\left[  0,\overline{T}\right]  }\lVert z_{n+1}\left(  t\right)
\rVert^{2}+\int_{0}^{\overline{T}}\lVert z_{n+1}\left(  s\right)
\rVert _{V}^{2}\,\dd s  \leq\frac{1}{2}\left(  \sup_{t\in\left[  0,\overline{T}\right]  }\lVert
z_{n}\left(  t\right)  \rVert^{2}+\int_{0}^{\overline{T}}\lVert
z_{n}\left(  s\right)  \rVert _{V}^{2}\,\dd s\right).
\end{align*}
Now we can proceed as in the second step of the proof of \cite[Theorem 3.3]{AL23_boundary_noise}, showing that the sequence $\left(  \ov^{n}\right)  $ is Cauchy in $C\left(
\left[  0,\overline{T}\right]  ;H\right)  \cap L^{2}\left(  0,\overline{T};V\right)  $. Its limit
$\ov$ is a weak solution of \eqref{eq: auxiliary pde} on $[0,\overline{T}]$ and , hence, by the previous step, it is the unique solution. We refer the reader to \cite[Theorem 3.3]{AL23_boundary_noise} for further details. After proving existence and uniqueness in $[0,\overline{T}]$ we can reiterate the existence procedure and in a finite number of steps cover the interval $[0,T]$.

\smallskip

\emph{Step 5: Continuity dependence on the data} 
Let $\ov^n$  (resp. $\ov$) the unique solution of \eqref{eq: auxiliary pde} with data $\uin^n,\ \bar{f}^n,\ \Tilde{v}^n$ (resp. $\uin,\ \bar{f},\ \Tilde{v}$). Since $u_0^n\rightarrow u_0$ in $H$  (resp. $\overline{f}^n\rightarrow \overline{f}$ in $L^2(0,T;V^*)$, $\Tilde{v}^n\rightarrow\Tilde{v}$ in $L^{p/2^{N-1}}(0,T;\mathbb{L}^r)$) the family $(\uin^n)_{n\in\mathbb{N}}$ is bounded in $H$ (resp. the family $(\overline{f}^n)_{n\in\mathbb{N}}$ is bounded in $L^2(0,T;V^*)$, the family $(\tilde{v}^n)_{n\in\mathbb{N}}$ is bounded in $L^{p/2^{N-1}}(0,T;\mathbb{L}^r)$), by \eqref{eq: energy relation modified} one can show easily that the family $(\ov^n)_{n\in\mathbb{N}}$ is bounded in $C([0,T];H)\cap L^2(0,T;V).$ Moreover for each $t\in [0,T]$, $z^n=\ov^n-\ov$ satisfies the energy relation
\begin{align}\label{continuity step 1}
    \frac{1}{2}\lVert z^n(t)\rVert^2+\int_0^t \lVert \nabla z^n(s)\rVert_{L^2}^2 \,\dd s&= \frac{1}{2}\lVert \uin^n-\uin\rVert^2 \notag\\ &+\int_0^t b(z^n(s),z^n(s),\ov(s))\,\dd s \notag\\ &+\int_0^t b(\ov^n(s),z^n(s),\tilde{v}^n(s)-\tilde{v}(s))\,\dd s\notag\\ &+\int_0^t b(z^n(s),z^n(s),\tilde{v}(s))\,\dd s\notag\\
    &+ \int_0^t b(\tilde{v}^n(s)-\tilde{v}(s),z^n(s),\ov(s))\,\dd s \notag\\
    & +\int_0^t\langle \overline{f}^n(s)-\overline{f}(s),z^{n}(s)\rangle \,\dd s.
\end{align}
Thanks to \autoref{lemma tecnico}, we can easily bound the right-hand side of relation \eqref{continuity step 1} by Young's inequality and H\"older's inequality obtaining 
\begin{align}\label{continuity step 2}
    \frac{1}{2}\lVert z^n(t)\rVert^2+\frac{1}{2}\int_0^t \lVert \nabla z^n(s)\rVert_{L^2}^2 \,\dd s&\leq \frac{1}{2}\lVert \uin^n-\uin\rVert^2+\int_0^t \lVert \overline{f}^n(s)-\overline{f}(s)\rVert_{V^*}^2 \,\dd s\notag\\ &+C\int_0^t \lVert z^n(s)\rVert^2 \left(\lVert \ov(s)\rVert_{\mathbb{L}^4}^4+\lVert \Tilde{v}(s)\rVert_{\mathbb{L}^{r}}^{\frac{2r}{r-2}}\right)\,\dd s\notag\\ & +C\lVert \tilde{v}^n-\tilde{v}\rVert_{L^{\frac{2r}{r-2}}(0,T;\mathbb{L}^r)}^2 \lVert \ov^n\rVert_{C([0,T];H)}^{\frac{2(r-2)}{r}}\lVert \ov^n\rVert_{L^2(0,T;V)}^{\frac{4}{r}}\notag\\ &+C\lVert \tilde{v}^n-\tilde{v}\rVert_{L^{\frac{2r}{r-2}}(0,T;\mathbb{L}^r)}^2 \lVert \ov\rVert_{C([0,T];H)}^{\frac{2(r-2)}{r}}\lVert \ov\rVert_{L^2(0,T;V)}^{\frac{4}{r}}.
\end{align}
Applying Gr\"onwall's inequality to relation \eqref{continuity step 2} the claimed continuity follows.
\end{proof}
\begin{remark}\label{remark measurability}
Freezing the variable $\omega\in \Omega$ and solving \eqref{eq:modified_NS} for each $\omega$ does not allow us to obtain information about the measurability properties of $v$. However, the measurability of $v$ with respect to the progressive $\sigma$-algebra follows from the continuity of the solution map with respect to $\uin$ and $w$. Therefore we have the required measurability properties for $v$ with $w$ being the mild solution of \eqref{Linear stochastic}. In particular $ v$ has $\mathbf{P}$-a.s. paths in $C(0,T;H)\cap L^{\frac{2q}{q-1}}(0,T;\mathbb{L}^{2q})$ for each $r\in (1,q_{\mathcal{H}})$, it is progressively measurable with respect to these topologies.
\end{remark}
 Combining \Cref{regularity stokes}, \Cref{lemma weak solution linear equivalence}, \Cref{thm: auxiliary splitting} and \Cref{remark measurability} we get immediately the existence of a $q$-solution of equation \eqref{eq:NS_fractional_noise} in the sense of \Cref{def z weak sol} for each $q\in (1,q_\mathcal{H})$.

\subsection{Proof of \Cref{t:global}\eqref{it:global1}}
\label{s:uniqueness_q_solution}
As discussed above, the results of \Cref{sec regularity mild stokes}, \Cref{subsec: nonlinear aux} provide the existence of a $q$-solution of equation \eqref{eq:NS_fractional_noise} in the sense of \Cref{def z weak sol} for each $q\in (1,q_\mathcal{H})$, moreover such a solution is adapted with paths in $C([0,T];H)$ due to \Cref{rmk: integrability}. {Here we are left to discuss the problem of uniqueness. In order to reach our goal, we start providing a lemma which shows equivalence of $q$-solutions in the sense of \autoref{def z weak sol} and those of the form $u=w_g+v$ as described by \autoref{lem:split} below. Then we conclude the proof by providing uniqueness for solutions of the form $u=w_g+v$.

%

\begin{lemma}
\label{lem:split}
Let $T<\infty$, $\uin\in L^0_{\F_0}(\O;\Ls^2)$, $q\in (1,\qstar)$ and assuming \autoref{ass:ass_fractional}. 
Then $u$ is a $q$-solution to \eqref{eq:NS_fractional_noise} in the sense of \Cref{def z weak sol} with paths in $C([0,T];\Ls^2)$ $\P-a.s.$
if and only if $v:=u-w_g$ is progressively measurable with paths in $ L^{2q'}(0,T;\Ls^{2q})\cap C([0,T];\Ls^2)$ $\P-a.s.$ and it solves, for all divergence-free $\varphi\in C^{\infty}(\Dom;\R^2)$ such that 
$\varphi=0$ on $\Gamma_{b}\cup \Gamma_u
$ and a.e.\ $t\in (0,T)$,
\begin{align}
\label{eq:v_split_equation_NS}
&\int_{\Dom} v(x,t)\varphi(x)\,\dd x 
-
\int_{\Dom} \uin(x)\varphi(x)\,\dd x \\
\nonumber
&\qquad\qquad 
=
\int_0^t \int_{\Dom}\big(v\cdot \Delta \varphi + [(v+w_g)\otimes (v+w_g)]:\nabla \varphi\big)\,\dd x\, \dd s .
\end{align}
\end{lemma}
\begin{proof}
    The proof is a trivial consequence of the notion of $q$-solution in \autoref{def z weak sol}, the regularity of $w_g$ in case of \autoref{ass:ass_fractional}, i.e. \autoref{regularity stokes}, and the equivalence between weak and mild solutions for the linear stochastic problem, see \autoref{lemma weak solution linear equivalence}.
\end{proof}
}

{We call a $v$ progressively measurable with paths in $ L^{2q'}(0,T;\Ls^{2q})\cap C([0,T];\Ls^2)$ $\P-a.s.$ satisfying \eqref{eq:v_split_equation_NS} in the sense of \Cref{lem:split} an auxiliary $q$-solution of \eqref{eq:modified_NS}. The uniqueness of auxiliary $q$-solutions and their independence on $q$ is the content of the following result, which, combined with \Cref{lem:split}, concludes the proof of the first item in \Cref{t:global}.}
\begin{proposition}[Uniqueness]
\label{prop:uniqueness}
Let $v_1$ be an {auxiliary} $q_1$-solution of \eqref{eq:modified_NS} and $v_2$ be an {auxiliary} $q_2$-solution of \eqref{eq:modified_NS}. Then $v_1\equiv v_2$.     
\end{proposition}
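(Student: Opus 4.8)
The plan is to establish uniqueness not by a direct energy estimate on the difference (which is unavailable here, see the last paragraph) but by comparing any $q$-solution with the solution produced by the existence part, using a scale-invariant contraction in the \emph{critical} space $L^{2q'}(0,T;\mathbb{L}^{2q})$ localized to short time intervals.

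First I would fix $q_0\in(\max(q_1,q_2),\qstar)$ and let $u^*=v^*+w_g$ be the $q_0$-solution obtained from \autoref{regularity stokes}, \autoref{thm: auxiliary splitting} and \autoref{remark measurability}. By \autoref{rmk: integrability} one has $v^*\in C([0,T];H)\cap L^{2q_0'}(0,T;\mathbb{L}^{2q_0})$, while $w_g\in C([0,T];\mathbb{L}^{2\tilde q})$ for every $\tilde q<\qstar$ by \autoref{regularity stokes}. Since both endpoints $(\infty,2)$ and $(2q_0',2q_0)$ lie on the Ladyzhenskaya–Prodi–Serrin line $\frac{2}{p}+\frac{2}{s}=1$, interpolation gives $v^*\in L^{2\tilde q'}(0,T;\mathbb{L}^{2\tilde q})$ for every $\tilde q\in(1,q_0]$, and therefore
\[
u^*\in L^{2\tilde q'}(0,T;\mathbb{L}^{2\tilde q})\qquad\text{for all }\tilde q\in(1,q_0].
\]
In particular $u^*$ is \emph{simultaneously} a $q_1$- and a $q_2$-solution in the sense of \autoref{def z weak sol}. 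It thus suffices to show that an \emph{arbitrary} $q$-solution $u$ coincides with $u^*$; applying this with $q=q_1$ and $q=q_2$ yields $u_1=u^*=u_2$.

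So fix $q<\qstar$, let $u$ be a $q$-solution, and set $z:=u-u^*=v-v^*$, so that $z(0)=0$ and $z\in L^{2q'}(0,T;\mathbb{L}^{2q})$. Since $u\otimes u-u^*\otimes u^*=z\otimes u+u^*\otimes z\in L^{q'}(0,T;\mathbb{L}^{q})$, the very weak formulation of \autoref{def z weak sol} upgrades, via the analyticity and bounded $H^\infty$-calculus of $A_q$ from \autoref{lem:fractional_powers_stokes}, to the Duhamel identity
\[
z(t)=-\int_0^t S(t-s)\,\mathbb{P}\,\div\big(z\otimes u+u^*\otimes z\big)(s)\,\dd s,\qquad t\in[0,T].
\]
The engine is the critical bilinear bound: for every subinterval $I\subseteq[0,T]$ and all $a,b\in L^{2q'}(I;\mathbb{L}^{2q})$,
\[
\Big\|\int_0^{\cdot} S(\cdot-s)\,\mathbb{P}\,\div(a\otimes b)(s)\,\dd s\Big\|_{L^{2q'}(I;\mathbb{L}^{2q})}\le C_0\,\|a\|_{L^{2q'}(I;\mathbb{L}^{2q})}\,\|b\|_{L^{2q'}(I;\mathbb{L}^{2q})},
\]
with $C_0$ independent of $I$. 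Granting this and using that the right-hand side is linear in $z$,
\[
\|z\|_{L^{2q'}(I;\mathbb{L}^{2q})}\le C_0\big(\|u\|_{L^{2q'}(I;\mathbb{L}^{2q})}+\|u^*\|_{L^{2q'}(I;\mathbb{L}^{2q})}\big)\,\|z\|_{L^{2q'}(I;\mathbb{L}^{2q})}.
\]
Because $t\mapsto\int_0^t(\|u\|_{\mathbb{L}^{2q}}^{2q'}+\|u^*\|_{\mathbb{L}^{2q}}^{2q'})\,\dd s$ is absolutely continuous, there is $\delta>0$ so that on every interval $I$ of length $\le\delta$ the prefactor is $<1$, forcing $z\equiv0$ on $I$. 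Starting from $z(0)=0$ and covering $[0,T]$ by finitely many such intervals (restarting the Duhamel formula at each left endpoint, where $z$ already vanishes) propagates $z\equiv0$ to all of $[0,T]$, i.e.\ $u=u^*$. To obtain the bilinear bound itself I would combine the $\mathbb{L}^q$–$\mathbb{L}^{2q}$ smoothing of the Stokes semigroup with a Hardy–Littlewood–Sobolev estimate in time: from the fractional-domain description in \autoref{lem:fractional_powers_stokes} and $\|A^{\sigma}S(\tau)\|\lesssim\tau^{-\sigma}$ one gets $\|S(\tau)\mathbb{P}\div(a\otimes b)\|_{\mathbb{L}^{2q}}\lesssim\tau^{-\frac12-\frac1{2q}}\|a\|_{\mathbb{L}^{2q}}\|b\|_{\mathbb{L}^{2q}}$ (using $a\otimes b\in\mathbb{L}^q$); the temporal kernel has exponent $\gamma=\frac12+\frac1{2q}\in(\frac12,1)$, and since $\frac1{2q'}+\frac1{2q}=\frac12$ gives $\|a\|_{\mathbb{L}^{2q}}\|b\|_{\mathbb{L}^{2q}}\in L^{q'}(I)$, the one-dimensional Hardy–Littlewood–Sobolev inequality with source exponent $q'$ and target $2q'$ is exactly balanced (and non-endpoint, precisely because $q>1$).

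The main obstacle is this critical bilinear estimate on the no-slip domain $\Dom$: one must assemble the mixed $\mathbb{L}^q$–$\mathbb{L}^{2q}$ space-time smoothing for the Stokes operator with Dirichlet conditions (not merely the heat semigroup) and check the Hardy–Littlewood–Sobolev balance at the exact critical exponents. It is here that the criticality of $L^{2q'}(0,T;\mathbb{L}^{2q})$ is decisive, the range $2<2q<2\qstar<4$ keeping us safely away from the forbidden endpoint $\mathbb{L}^2$. A secondary point needing care is the passage from the very weak formulation of \autoref{def z weak sol} to the Duhamel identity for $z$, which I would justify by the bounded $H^\infty$-calculus of $A_q$ and a density argument. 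Finally, I stress that a direct energy estimate is genuinely unavailable: owing to the infinite-energy nature of the solutions, $z$ need not lie in $L^2(0,T;V)$, so neither $\frac{\dd}{\dd t}\|z\|^2$ nor the pairing $\langle z\cdot\nabla z,u^*\rangle$ can be controlled, which is exactly why the scale-invariant contraction in the critical space is the natural device.
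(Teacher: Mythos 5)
Your proposal is correct, but it takes a genuinely different route from the paper. The paper's proof never passes through Duhamel/contraction: for $q_1=q_2=q$ it linearizes in the difference $\delta=u_1-u_2$, notes that $B(\delta,v_1+w_g), B(v_2+w_g,\delta)\in L^{q'}(0,T;X_{-1/2,A_q})$, and then uses \emph{deterministic maximal $L^{q'}$-regularity} of $A_q$ together with the trace embedding $W^{1,q'}(0,T;X_{-1/2,A_q})\cap L^{q'}(0,T;X_{1/2,A_q})\embed C([0,T];B^{1-2/q'}_{q,q'})$ and the interpolation inequality $\|f\|_{L^{2q}}\lesssim \|f\|_{B^{1-2/q'}_{q,q'}}^{1/2}\|f\|_{H^{1,q}}^{1/2}$ to close a Gronwall estimate in the norm of $B^{1-2/q'}_{q,q'}$; for $q_1\neq q_2$ it falls back on the compatibility \autoref{l:compatibility}, which re-examines the splitting scheme \eqref{system PDE} at the two scales. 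You instead run the classical Fujita--Kato/Meyer critical-space argument: a mild (Duhamel) formulation for $z=u-u^*$, the scale-invariant bilinear estimate obtained from the Stokes smoothing $\|S(\tau)\mathbb{P}\div(a\otimes b)\|_{\Ls^{2q}}\lesssim \tau^{-\frac12-\frac1{2q}}\|a\|_{\Ls^{2q}}\|b\|_{\Ls^{2q}}$ plus Hardy--Littlewood--Sobolev in time (your exponent bookkeeping $\gamma=\frac12+\frac1{2q}$, $q'\to 2q'$ is exact and non-endpoint precisely because $q>1$), and a short-interval contraction propagated by absolute continuity. Your treatment of $q_1\neq q_2$ --- interpolating $v^*\in C([0,T];H)\cap L^{2q_0'}(0,T;\Ls^{2q_0})$ along the Ladyzhenskaya--Prodi--Serrin line to conclude that one reference solution $u^*$ lies in \emph{all} lower critical classes --- is a clean substitute for \autoref{l:compatibility} and avoids revisiting the construction of \autoref{subsec: nonlinear aux}. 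What each approach buys: the paper's route needs no semigroup $L^q$--$L^{2q}$ estimates or HLS and yields, as a by-product, the extra regularity $\delta\in C([0,T];B^{1-2/q'}_{q,q'})$ of the difference; yours is more standard and portable, and unifies the two cases. Two steps you leave as sketches do need to be written out, but both are within reach of the paper's toolkit: the bilinear estimate follows from $\|B(a,b)\|_{X_{-1/2,A_q}}\lesssim\|a\otimes b\|_{L^q}$, analyticity and invertibility of $A_q$ (uniform constants on $[0,T]$), and the embedding $\Do(A_q^{1/(2q)})\embed \Ls^{2q}$ obtained from the $H^\infty$-calculus of \autoref{lem:fractional_powers_stokes} via complex interpolation and critical Sobolev embedding; and the upgrade from the very weak formulation to the Duhamel identity requires extending to time-dependent test functions and using density of smooth divergence-free fields in $\Do(A_{q'})$, exactly as the paper does elsewhere (cf.\ the argument of \autoref{lemma weak solution linear equivalence} and the use of \cite[Theorem 7]{flandoli2023stochastic} in \autoref{sec:interior reg}).
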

The uniqueness result in the Ladyzhenskaya–Prodi–Serrin class of \Cref{prop:uniqueness} might be known to experts. Here, for completeness, we provide a relatively short proof relying on maximal $L^p$-regularity techniques which seem not standard even in the absence of noise. 
\begin{proof}[Proof of \Cref{prop:uniqueness}]
We split the proof into two cases.

\smallskip

\emph{Case q$_1=$ q$_2=$ q.}
Letting $\delta:=v_1-v_2$, for all divergence free vector field $\varphi\in C^{\infty}(\Dom;\R^2)$ such that  $\varphi=0$ on $\Gamma_{b}\cup \Gamma_u $ and a.a.\ $t\in (0,T)$, we have
\begin{align*}
\langle \delta(t),\varphi\rangle&-\int_0^t \langle \delta(s),\Delta \varphi\rangle \,\dd s \\
&=\int_0^t b(\delta(s),\varphi,v_1(s)+w_g(s))\,\dd s+\int_0^t b(v_2(s)+w_g(s),\varphi,\delta(s))\,\dd s .
\end{align*}
As $v_i\in L^{2q'}(0,T;L^{2q}(\Dom;\R^2))$ for $i\in \{1,2\}$, we obtain
$$
B(\delta,v_1+w_g),
B(v_2+w_g,\delta)\in L^{q'}(0,T;X_{-1/2,A_q}) 
\ \ \P-a.s.
$$
Hence, by the density of divergence-free $\varphi\in C^{\infty}(\Dom;\R^2)$ such that  $\varphi=0$ on $\Gamma_{b}\cup \Gamma_u $ in the domain of the Stokes operator $A_{q}$
and from the maximal $L^{q}$-regularity of $A_q$, it follows that 
\begin{align*}
\delta
&\in W^{1,q'}(0,T;X_{-1/2,A_q}) \cap L^{q'}(0,T;X_{1/2,A_q})\\
&\subset C([0,T];B^{1-2/q'}_{q,q'}(\Dom;\R^2))\ \ \P-a.s.
\end{align*}
where in the last step we used the trace embedding \cite[Theorem 3.4.8]{pruss2016moving} applied with $A=A_q$.

By real interpolation (see e.g.\ \cite[Chapter 6]{BeLo}), we obtain
\begin{align*}
( B^{1-2/q'}_{q,q'}(\Dom) ,H^{1,q}(\Dom))_{1/2,1}
\embed     
B^{1-1/q'}_{q,1}(\Dom)
\embed     
L^{2q}(\Dom)
\end{align*}
where in the last step we applied the Sobolev embedding and $1-\frac{1}{q'}-\frac{2}{q}=-\frac{1}{q}$. In particular, 
\begin{equation}
\label{eq:interpolation_inequality_uniqueness}
\|f\|_{L^{2q}(\Dom)}\lesssim 
\|f\|_{B^{1-2/q'}_{q,q'}(\Dom)}^{1/2}
\|f\|_{H^{1,q}(\Dom)}^{1/2}
\end{equation}
for all $f$ for which the right-hand side is finite.

Hence, by maximal $L^{q}$-regularity of $A_q$, again the trace embedding \cite[Theorem 3.4.8]{pruss2016moving} as well as the H\"older inequality, there exists a constant $C_0>0$ independent of $v_1,v_2$ and $\delta$ such that, for all $t\in [0,T]$ and $\P-a.s.$,
\begin{align*}
 &\sup_{r\in [0,t]}\|\delta(r)\|_{B^{1-2/q'}_{q,q'}}^{q'}
 + \int_0^t \|\delta(r)\|_{H^{1,q}}^{q'}\,\dd r \\
&\leq C_0 \int_0^t  \big(\max_{i}\|v_i(r)\|_{L^{2q}}^{q'}
+\|w_g(r)\|_{L^{2q}}^{q'}\big)\|\delta(r)\|_{L^{2q}}^{q'}\,\dd r\\
&\leq C_1 \int_0^t  \big(\max_{i}\|v_i(r)\|_{L^{2q}}^{2q'}
+\|w_g(r)\|_{L^{2q}}^{2q'}\big)\|\delta(r)\|_{B^{1-2/q'}_{q,q'}}^{q'}\,\dd r
+\frac{1}{2}\int_0^t \|\delta(r)\|_{H^{1,q}}^{q'}\,\dd r 
\end{align*}
where in the last step we used the Young inequality and \eqref{eq:interpolation_inequality_uniqueness}.

Now the conclusion follows from the Gr\"onwall lemma and the integrability conditions on $v_1,v_2$ and $w_g\in C([0,\infty);L^{2q})$ for all $q<\qstar$ by \Cref{regularity stokes}.

\smallskip

\emph{Case q$_1\neq$ q$_2$.} In the case of $q_1\neq q_2$ we start by observing that by previous case and the results of \Cref{subsec: nonlinear aux}, for $k\in \{1,2\}$, we have that $v_k=\sum_{i=1}^{N_k-1} v_{k,i}+\overline{v}_k$ where $(v_{k,0},\dots,v_{k,N_{k}-1},\overline{v}_k)$
is the $(p_k,r_k)$-solution of \eqref{system PDE} in the sense of \Cref{def: solution system PDE} with $p_k=2^{N_k}\frac{q_k}{q_k-1}$ and $r_k=2q_k$. The claim is then a particular case of \Cref{l:compatibility} below on the compatibility of the $(p,r)$ solutions of \eqref{eq:modified_NS} in the sense of \Cref{def: solution system PDE}.
\end{proof}

\begin{lemma}[Compatibility]
\label{l:compatibility}
    Let $w\in C([0,T];\Ls^{r})
$ for some $r\in (2,4)$ and $2<\tilde{r}\leq r$. If $(v_0,\dots,v_{N-1},\overline{v})$ is a solution of \eqref{system PDE} in the sense of \Cref{def: solution system PDE} with \begin{align*}
    p\geq 2^N\frac{r}{r-2},\qquad q_i=\frac{2r}{r+2+(i+1)(2-r)}
\end{align*} and $(\wt{v}_0,\dots,\wt{v}_{\tilde{N}-1},\tilde{\overline{v}})$ is a solution of \eqref{eq:modified_NS} in the sense of \Cref{def: solution system PDE} with 
\begin{align*}
 \tilde{p}\geq 2^{\tilde{N}}\frac{\tilde{r}}{\tilde{r}-2},\qquad \tilde{q}_i=\frac{2\tilde{r}}{\tilde{r}+2+(i+1)(2-\tilde{r})}   
\end{align*}
then $v=\tilde{v}$.
\end{lemma}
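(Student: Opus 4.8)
The plan is to reduce the statement to the uniqueness of solutions of the modified equation \eqref{eq:modified_NS} at the common, rougher, integrability level $\tilde{q}:=\tilde{r}/2$, and then to rerun the energy argument already carried out in the case $q_1=q_2$ of \autoref{prop:uniqueness}. Throughout I write $v=\sum_{i=0}^{N-1}v_i+\ov$ and $\tilde v=\sum_{i=0}^{\tilde N-1}\tilde v_i+\hat v$, both with the same initial datum $\uin$. First I would show that $v$ (and, identically, $\tilde v$) is a weak solution of \eqref{eq:modified_NS} tested against divergence-free $\varphi\in C^\infty(\Dom;\R^2)$ vanishing on $\Gamma_b\cup\Gamma_u$. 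Summing the weak formulations \eqref{weak v0}, \eqref{weak vi} and \eqref{weak vbar} with the single test function $\varphi$ (which lies in $\Do(A_{q_i'})\cap\Do(A)$ for every $i$), the bilinearity of $B$ together with the algebraic identity recorded before \autoref{def: solution system PDE} collapses the sum of all nonlinear contributions into $B(v+w,v+w)$; this is precisely the assertion already invoked in \autoref{rmk: integrability}. Hence both $v$ and $\tilde v$ satisfy, for all such $\varphi$ and all $t\in[0,T]$,
\begin{align*}
\langle v(t),\varphi\rangle-\langle \uin,\varphi\rangle
=\int_0^t\big(\langle v(s),\Delta\varphi\rangle+\langle (v+w)\otimes(v+w),\nabla\varphi\rangle\big)\,\dd s,
\end{align*}
with the same data $(\uin,w)$.

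Next I would unify the functional setting at the level $\tilde q$. By \autoref{rmk: integrability} one has $v\in C([0,T];H)\cap L^{\frac{2r}{r-2}}(0,T;\Ls^r)$ and $\tilde v\in C([0,T];H)\cap L^{\frac{2\tilde r}{\tilde r-2}}(0,T;\Ls^{\tilde r})$. Since $\Dom$ is bounded and $\tilde r\le r$, the embedding $\Ls^r\embed\Ls^{\tilde r}$ holds, so in particular $w\in C([0,T];\Ls^{\tilde r})$. For $v$ I would combine the pointwise-in-space interpolation $\|v\|_{\Ls^{\tilde r}}\le\|v\|_{H}^{1-\theta}\|v\|_{\Ls^r}^{\theta}$, where $\tfrac{1}{\tilde r}=\tfrac{1-\theta}{2}+\tfrac{\theta}{r}$, with H\"older in time against $\|v\|_{H}^{1-\theta}\in L^\infty(0,T)$, which yields $v\in L^{\sigma}(0,T;\Ls^{\tilde r})$ with $\sigma=\tfrac{2r}{(r-2)\theta}=\tfrac{2\tilde r}{\tilde r-2}=2\tilde q'$. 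Thus both $v$ and $\tilde v$ belong to the Ladyzhenskaya--Prodi--Serrin class $C([0,T];H)\cap L^{2\tilde q'}(0,T;\Ls^{2\tilde q})$, and $w$ is controlled in the same spatial scale.

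Finally I would set $\delta:=v-\tilde v$, which has zero initial datum and solves, weakly, the difference equation whose forcing is $B(\delta,v+w)+B(\tilde v+w,\delta)$. Since $v,\tilde v,w\in L^{2\tilde q'}(0,T;\Ls^{2\tilde q})$, these products lie in $L^{\tilde q'}(0,T;X_{-1/2,A_{\tilde q}})$, so maximal $L^{\tilde q'}$-regularity of $A_{\tilde q}$ gives
$$
\delta\in W^{1,\tilde q'}(0,T;X_{-1/2,A_{\tilde q}})\cap L^{\tilde q'}(0,T;X_{1/2,A_{\tilde q}})\embed C([0,T];B^{1-2/\tilde q'}_{\tilde q,\tilde q'}(\Dom;\R^2))
$$
by the trace embedding \cite[Theorem 3.4.8]{pruss2016moving}. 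Inserting the interpolation inequality \eqref{eq:interpolation_inequality_uniqueness} with $q$ replaced by $\tilde q$, and applying Young's inequality exactly as in the case $q_1=q_2$ of \autoref{prop:uniqueness} to absorb the top-order term, I would arrive at a Gronwall inequality whose forcing, $\max\{\|v(s)\|^{2\tilde q'}_{\Ls^{\tilde r}},\|\tilde v(s)\|^{2\tilde q'}_{\Ls^{\tilde r}},\|w(s)\|^{2\tilde q'}_{\Ls^{\tilde r}}\}$, is integrable on $(0,T)$ by the previous step. Gronwall then forces $\delta\equiv0$, i.e.\ $v=\tilde v$.

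I expect the only genuine obstacle to be the bookkeeping of the last paragraph: checking that every nonlinear term of the difference equation is controlled in $L^{\tilde q'}(0,T;X_{-1/2,A_{\tilde q}})$ solely through the \emph{unified} integrability produced in the second step, and that the exponents generated by \eqref{eq:interpolation_inequality_uniqueness} and by Young's inequality match so that the Gronwall forcing is precisely the finite quantity above. Everything else is either the algebraic collapse of the nonlinearity in the first step or a verbatim repetition of an estimate already present in the paper, so the essential new content is the interpolation-and-embedding reduction to the common level $\tilde q=\tilde r/2$, which crucially uses the hypothesis $\tilde r\le r$.
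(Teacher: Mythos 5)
Your proposal is correct, and it reaches the stated conclusion by a reduction that differs from the paper's in a meaningful way, even though both arguments end with the identical uniqueness mechanism (maximal $L^{\tilde q'}$-regularity of $A_{\tilde q}$, the trace embedding, the interpolation inequality \eqref{eq:interpolation_inequality_uniqueness}, Young and Gronwall, exactly as in the first case of \autoref{prop:uniqueness}). The paper proceeds by a case analysis: for $r=\tilde r$ or $N=\tilde N$ it identifies the two splittings component by component, using that the construction is insensitive to $p$ and that the Stokes semigroups are consistent across integrability scales, $S_{\tilde q_i}(t)|_{\Ls^{q_i}}=S_{q_i}(t)$; for $\tilde N>N$ it shows inductively $v_i=\tilde v_i$ for $i<N$, lumps the tail into $\hat v=\sum_{i=N}^{\tilde N-1}\tilde v_i+\tilde{\overline v}$, observes that $\overline v$ and $\hat v$ solve the \emph{same} equation \eqref{weak vbar} with forcing $f=w+\sum_{i<N}v_i$, and only then runs the maximal-regularity argument on $\overline v-\hat v$. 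You instead collapse \emph{both} splittings entirely into weak solutions of \eqref{eq:modified_NS} (which is legitimate: your algebraic collapse is precisely the content of the identity displayed before \autoref{def: solution system PDE} and of \autoref{rmk: integrability}, and a smooth divergence-free $\varphi$ vanishing on the boundary lies in every $\Do(A_{q_i'})\cap\Do(A)$, so summing the weak formulations is allowed), then place both sums in the common Ladyzhenskaya--Prodi--Serrin class $C([0,T];H)\cap L^{2\tilde q'}(0,T;\Ls^{2\tilde q})$ via the interpolation $\|v\|_{\Ls^{\tilde r}}\le\|v\|_{H}^{1-\theta}\|v\|_{\Ls^{r}}^{\theta}$ (your exponent bookkeeping is right: $\theta=\tfrac{r(\tilde r-2)}{\tilde r(r-2)}$ gives $\sigma=2\tilde q'$, and this is where $\tilde r\le r$ enters), and compare $v$ with $\tilde v$ wholesale. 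What your route buys is the elimination of the case analysis, of the inductive component identification, and of the semigroup-consistency point; it also makes transparent that the lemma is really a uniqueness statement for LPS-class weak solutions of \eqref{eq:modified_NS} with common data $(\uin,w)$, and there is no circularity since you rerun the argument of \autoref{prop:uniqueness}, not its statement. What the paper's route buys is the stronger byproduct that the two splittings agree term by term on the shared indices, i.e.\ compatibility of the decompositions themselves and not only of their sums.
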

\begin{proof}
    The case of $r=\tilde{r}$ is obvious since in such a case $N=\tilde{N},\ q_{i}=\tilde{q}_i$ and our construction does not rely on the choice of $p$ so far that $p\geq 2^N\frac{r}{r-2}$. In the general case we have two sequences $(v_0,\dots, v_{N-1},\overline{v})$ and $(\tilde{v}_0,\dots, \tilde{v}_{\tilde{N}-1},\tilde{\overline{v}})$. If $N=\tilde{N}$ the claim is still trivial since our construction does not rely on the choice of $p$ so far that $p\geq 2^N\frac{r}{r-2}$ and of the precise choice of the $q_i$ since $S_{\tilde{q}_i}(t)|_{\mathbb{L}^{q_i}}=S_{q_i}(t)$. If $\tilde{N}>N$ arguing as above we have
    \begin{align*}
        v_i=\tilde{v}_i\quad \forall i\in{0,\dots, N-1}
    \end{align*}
    and we are left to show that $\overline{v}=\sum_{i=N}^{\tilde{N}-1} \tilde{v}_i+\tilde{\overline{v}}=:\hat{v}$. Due to previous steps we can assume that $v$ is $(\tilde{p},r)$ solution since $\tilde{p}\geq2^{\tilde{N}}\frac{\tilde{r}}{\tilde{r}-2}>2^{{N}}\frac{{r}}{{r}-2}$. We observe that due to \Cref{def: solution system PDE} and \Cref{rmk: integrability}, 
\begin{align}\label{regularity_all_finite}
    \overline{v},\hat{v}\in C([0,T];H)\cap L^{\frac{2\tilde{r}}{\tilde{r}-2}}(0,T;\mathbb{L}^{\tilde{r}}),\ f:=w+\sum_{i=0}^{N-1}v_i\in L^{\frac{2\tilde{r}}{\tilde{r}-2}}(0,T;\mathbb{L}^{\tilde{r}}).
\end{align}
Therefore either $\overline{v}$ and $\hat{v}$ satisfy for all divergence free vector field $\varphi\in C^{\infty}(\Dom;\R^2)$ such that  $\varphi=0$ on $\Gamma_{b}\cup \Gamma_u $ equation \eqref{weak vbar}. Therefore, denoting by $\delta(t)=\overline{v}-\hat{v}$ we have that $\delta$ satisfies

\begin{align*}
  \left\langle \delta\left(  t\right)  ,\varphi\right\rangle&-\int_{0}^{t}\left\langle z\left(  s\right)  ,\Delta\varphi\right\rangle
\,\dd s\\ & =\int_0^t b(\delta(s),\varphi,\overline{v}(s)+f(s))\,\dd s+\int_0^t b(\hat{v}(s)+f(s),\varphi,\delta(s))\,\dd s.
\end{align*}
Denoting by $\tilde{q}=\frac{\tilde{r}}{2}\in (1,2)$,
due to relation \eqref{regularity_all_finite}
\begin{align*}
    B(\delta,\overline{v}+f),\ B(\hat{v}+f,\delta)\in L^{\tilde{q}'}(0,T;X_{-1/2,A_{\tilde{q}}}).
\end{align*}
Now the proof proceeds as in the first case of \Cref{prop:uniqueness} and we omit the details.
\end{proof}

\section{Interior regularity}\label{sec:interior reg}
As announced at the end of \Cref{sec:main results}, we prove \Cref{t:global}\eqref{it:global2}. To this end, we first prove the interior regularity of $w_g$ and afterwards the one of $v$ by exploiting the decomposition introduced in \Cref{subsec: nonlinear aux}.

\subsection{Stokes equations}\label{sec interior regularity mild stokes}
Let $(v_0,v_1,\dots,v_{N-1},\overline{v})$ be the $(p,r)$-solution to \eqref{system PDE} as defined in \Cref{def: solution system PDE} given by \Cref{thm: auxiliary splitting}.
Let $N_0$ be the $\mathbf{P}$ null measure set where at least one between 
\begin{align*}
    w_{g}&\notin C([0,T];\mathbb{L}^r),\quad v_i\notin W^{1,p/2^i}(0,T;X_{-1/2,A_{q_i}})\cap L^{p/2^i}(0,T;X_{1/2,A_{q_i}}),\\ \overline{v}&\notin C([0,T];H)\cap L^2(0,T;V),
\end{align*} \eqref{weak v0}, \eqref{weak vi}, \eqref{weak vbar}, \eqref{weak formulation linear} is not satisfied. In the following, we will work pathwise in $\Omega\setminus N_0$ even if not specified.
Thanks to the weak formulation guaranteed by \Cref{lemma weak solution linear equivalence} we can easily obtain the interior regularity of the linear stochastic problem \eqref{Linear stochastic}. Indeed, we are exactly in the same position of \cite[Corollary 4.4]{AL23_boundary_noise} and the following holds. We omit the proof as it follows verbatim the one of \cite[Corollary 4.4]{AL23_boundary_noise}.
\begin{lemma}\label{corollary inteorior regularity linear}
        Let \Cref{ass:ass_fractional} be satisfied.
        Let $w_g$ be the unique weak solution of  \eqref{Linear stochastic} in the sense of \Cref{weak solution linear}.  Then, for all $0<t_1\leq t_2<T,$ $x_0\in \Dom $, $\rho>0$ such that $\mathrm{dist}({B(x_0,\rho)}, \partial \Dom)>0$,
    \begin{align*}
        w_g\in C([t_1,t_2], C^{\infty}({B(x_0,\rho)};\mathbb{R}^2)) \quad \mathbf{P}-a.s.
    \end{align*}
\end{lemma}

\subsection{Auxiliary Navier--Stokes equations and proof of \Cref{t:global}\eqref{it:global2}}\label{sec interior regularity nonlinear auxiliary}
To deal with the interior regularity of \eqref{eq:modified_NS} we perform a Serrin type argument, see \cite{lemarie2018navier,serrin1961interior}. 
In contrast to \cite{AL23_boundary_noise}, as $w_g\notin C([0,T];\mathbb{L}^4)$, we cannot work directly on $v$. However, recalling that the solution $v$ to \eqref{eq:NS_fractional_noise} proven in \Cref{s:uniqueness_q_solution} satisfies
\begin{align}
\label{eq:splitting_interior}
    v=\sum_{i=0}^{N-1}v_i+\overline{v}
\end{align} 
where, again, $(v_0,\dots,v_{N-1},\overline{v})$ is the $(p,r)$-solution to \eqref{system PDE}, cf. \Cref{subsec: nonlinear aux}.
The advantage of having the splitting \eqref{eq:splitting_interior} at our disposal is that $v_i$ satisfies a linear problem where the forcing terms only depend on $v_0,\dots,v_{i-1}$. Thus, by \Cref{corollary inteorior regularity linear} and an induction argument, we can prove that $v_i$ is smooth inside $(0,T)\times \Dom$. While to prove the corresponding statement for $\overline{v}$, we can exploit that $\overline{v}$ is a Leray solution (i.e. it has finite energy) and therefore the Serrin regularization can be adjusted to our situation. 

We begin with analyzing the interior regularity of $v_i$ for $i\in \{0,\dots,N-1\}$. 

\begin{lemma}\label{interior regularity vi}
Let \Cref{ass:ass_fractional}, $r\in (2,4)$ and $p\geq 2^N\frac{r}{r-2}$. Let $(v_0,\dots,v_{N-1},\overline{v})$ be the $(p,r)$-solution of \eqref{system PDE} in the sense of \Cref{def: solution system PDE}. Then for all $i\in \{0,\dots, N-1\},$ $0<t_1\leq t_2<T,$ $x_0\in \Dom $, $\rho>0$ such that $\mathrm{dist}({B(x_0,\rho)}, \partial \Dom)>0$,
    \begin{align*}
        v_i\in C([t_1,t_2], C^{\infty}({B(x_0,\rho)};\mathbb{R}^2)) \quad \mathbf{P}-a.s.
    \end{align*}
\end{lemma}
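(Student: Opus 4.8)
The plan is to argue by induction on $i\in\{0,\dots,N-1\}$, reducing the interior regularity of each $v_i$ to that of a \emph{linear} Stokes problem whose bulk forcing is already smooth in space inside $\Dom$. Throughout I work pathwise on $\Omega\setminus N_0$, so that $w_g,v_0,\dots,v_{N-1},\ov$ have the regularity recorded in \autoref{def: solution system PDE} and satisfy the weak formulations \eqref{weak v0} and \eqref{weak vi}. The induction hypothesis is that $w_g$ and $v_0,\dots,v_{i-1}$ belong to $C([t_1,t_2];C^\infty(B(x_0,\rho);\R^2))$ for every $0<t_1\le t_2<T$ and every ball with $\mathrm{dist}(B(x_0,\rho),\partial\Dom)>0$; for $w_g$ this is precisely \autoref{corollary inteorior regularity linear}, which supplies the starting point of the induction.

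For the base case $i=0$, equation \eqref{weak v0} is the linear Stokes system $\partial_t v_0-\Delta v_0+\nabla p_0=-\div(w_g\otimes w_g)$, $\div v_0=0$. By \autoref{corollary inteorior regularity linear} the tensor $w_g\otimes w_g$ is smooth in space inside $\Dom$ and continuous in time, so the forcing is interior-smooth in space. The inductive step is structurally identical: by \eqref{weak vi} the forcing driving $v_i$ is a finite sum of quadratic terms $\div(a\otimes b)$ with $a,b\in\{w_g,v_0,\dots,v_{i-1}\}$, and by the induction hypothesis every factor is interior-smooth, hence so is the forcing. Thus in all cases $v_i$ solves a linear Stokes equation whose forcing has arbitrarily high \emph{spatial} regularity on each interior ball.

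It then remains to prove interior regularity for such a linear Stokes problem, which I would carry out following almost verbatim the argument of \autoref{corollary inteorior regularity linear} and \cite[Corollary 4.4]{AL23_boundary_noise}. The pressure is handled locally: taking the divergence of the momentum equation and using $\div v_i=0$ yields the elliptic identity $\Delta p_i=\div f_i$ in $\Dom$; since the right-hand side is smooth in space on any interior ball, interior elliptic regularity for the Laplacian shows that $\nabla p_i$ is smooth there too, this being a purely local statement that does not feel the boundary. Substituting back, $v_i$ solves the vector heat equation $\partial_t v_i-\Delta v_i=f_i-\nabla p_i$ with interior-smooth (in space) right-hand side, so a localized maximal $L^p$-regularity and bootstrapping argument (cutting off with $\chi\in C_c^\infty(\Dom)$, absorbing the commutator terms, and iterating to gain one spatial derivative at a time) gives $v_i\in C([t_1,t_2];C^\infty(B(x_0,\rho);\R^2))$, completing the induction over $i$.

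The main obstacle, and the reason the conclusion is only $C([t_1,t_2];C^\infty)$ rather than jointly smooth, is exactly the nonlocality of the pressure: although $\nabla p_i$ is spatially smooth in the interior, its \emph{time} regularity is tied, through the global elliptic solve, to the boundary data $g\,\dot W^\hurst$, so no gain of interior time-regularity beyond continuity can be expected (cf.\ the discussion after \autoref{t:global} and \cite{serrin1961interior}). A secondary technical point is to upgrade the pointwise-in-time spatial smoothness to continuity in time with values in $C^\infty(B(x_0,\rho);\R^2)$; this follows from the uniformity in $t\in[t_1,t_2]$ of the localized estimates together with the time-continuity of $v_i$ in the base space already guaranteed by \autoref{def: solution system PDE}.
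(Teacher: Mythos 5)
Your induction scheme (reduce each $v_i$ to a linear problem whose forcing is quadratic in $w_g,v_0,\dots,v_{i-1}$, hence interior-smooth by \autoref{corollary inteorior regularity linear} and the induction hypothesis) is exactly the paper's, but the mechanism you use to handle the Stokes structure is genuinely different. The paper never reconstructs the pressure: it tests the weak formulation with $\phi=-\nabla^{\perp}\chi$, $\chi\in C_c^{\infty}((0,T)\times\Dom)$, so that the (implicit) pressure term vanishes identically, obtaining a scalar \emph{heat} equation for the vorticity $\omega_i=\operatorname{curl}v_i$; it then localizes $\omega_i$ with a space-time cutoff, applies maximal $L^p$-regularity for the heat equation on $\R^2$, and finally recovers $v_i$ from $\omega_i$ through the local elliptic problem $\Delta(\phi_0 v_i)=\nabla^{\perp}\omega_i\,\phi_0+\Delta\phi_0\, v_i+2\nabla\phi_0\cdot\nabla v_i$ on a ball, iterating over shrinking balls to gain one derivative per step. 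Your route is instead the classical Serrin-type velocity--pressure argument (cf.\ \cite[Chapter 13]{lemarie2018navier}): recover the pressure, show it is interior-smooth in space by hypoellipticity of $\Delta$, and bootstrap the vector heat equation for $v_i$. The vorticity trick is specific to two dimensions (scalar vorticity, no stretching) and buys a completely pressure-free argument; your approach is the one that would survive in 3D, at the cost of the pressure reconstruction.

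That reconstruction is where your sketch has a real gap. The formulations \eqref{weak v0} and \eqref{weak vi} contain \emph{no} pressure: they hold only against divergence-free test fields vanishing on $\Gamma_b\cup\Gamma_u$. Writing ``$\partial_t v_i-\Delta v_i+\nabla p_i=f_i$'' with a pointwise-in-time pressure is therefore not available for free. What de Rham's theorem gives, starting from the time-integrated weak formulation, is a potential $P(t,t')$ with
$$
v_i(t)-v_i(t')-\int_{t'}^{t}\big(\Delta v_i-\div G_i\big)(s)\,\dd s=-\nabla P(t,t')\quad\text{in }H^{-1,q_i}(\Dom;\R^2),
$$
and taking the divergence yields $\Delta P(t,t')=-\int_{t'}^{t}\div\div G_i(s)\,\dd s$, whence $P(t,\cdot)$ is interior-smooth in space but merely \emph{continuous} in time -- precisely because of the nonlocality you yourself invoke at the end, no time derivative $p_i=\partial_t P$ exists in any useful sense. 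Consequently your heat equation for $v_i$ must be run in time-integrated form (equivalently, one bootstraps $z=\chi v_i+\chi\nabla P$, whose integrated equation has a genuine $L^{p/2^i}$-in-time, interior-smooth-in-space forcing, and then subtracts the $C_t C^{\infty}_{x,\mathrm{loc}}$ term $\chi\nabla P$ back off). One also needs the global low-norm anchor $\nabla P(t,t')\in H^{-1,q_i}(\Dom;\R^2)$, uniformly in $t$, to make the interior elliptic estimates quantitative and time-uniform. All of this is standard and fillable -- it is essentially the argument the paper runs for $\ov$ in \autoref{Lemma uniform boundendness} via \cite[Theorem 13.1--13.2]{lemarie2018navier} -- but as written, asserting the pointwise-in-time momentum equation with pressure is the one step of your proof that does not follow from \autoref{def: solution system PDE} without an additional argument.
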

\begin{proof}
As in the first step of the proof of \Cref{thm: auxiliary splitting} we argue by induction exploiting strongly the linear and triangle structure of \eqref{system PDE}. Before starting we observe that, by \cite[Theorem 3.4.8]{pruss2016moving}, it follows that
\begin{align}\label{regularity auxiliary weak}
    v_i\in C([0,T];\mathbb{L}^{q_i})\cap L^{p/{2^i}}(0,T;X_{1/2, A_{q_i}}).
\end{align}

\emph{Step 1: Interior regularity of $v_0$}.
First let us observe that, since $\mathrm{dist}({B(x_0,\rho)}, \partial \Dom)>0,\ 0<t_1\leq t_2<T$, we can find $\eps$ small enough such that $0<t_1-2\eps<t_1\leq t_2<t_2+2\eps<T,\ \mathrm{dist}({B(x_0,\rho+2\eps)}, \partial \Dom)>0$.
As described in \Cref{lemma weak solution linear equivalence}, arguing as in the proof of \cite[Theorem 7]{flandoli2023stochastic}, we can extend the weak formulation satisfied by $v_0$ to time dependent test functions $\phi\in C^1([0,T]; \mathbb{L}^{q_0'})\cap C([0,T];\Do(A_{q_0'}))$ obtaining that for each $t\in [0,T]$
\begin{align*}
    \langle v_0(t),\phi(t)\rangle& =  \int_0^t \langle v_0(s),\partial_s \phi(s)\rangle \,\dd s- \int_{0}^{t}\left\langle v_0\left(
s\right)  ,A_{q_0'}\phi(s)\right\rangle \,\dd s\\ & +\int_{0}^{t}b\left(
w_g\left(  s\right)  ,\phi(s),w_g\left(
s\right)  \right)  \,\dd s \quad \mathbf{P}-a.s.
\end{align*}
Choosing $\phi=-\nabla^{\perp}\chi,\ \chi\in C^{\infty}_c((0,T)\times \Dom)$ in the weak formulation above and denoting by
\begin{align*}
    \omega_0=\operatorname{curl}v_0&\in C([0,T];H^{-1,q_0}(\Dom))\cap L^p(0,T;L^{q_0}(\Dom)), \\ 
    \omega_w=\operatorname{curl}w_g&\in  C([t_1-2\eps,t_2+2\eps], C^{\infty}({B(x_0,\rho+2\eps)}))\quad \mathbf{P}-a.s.
\end{align*} it follows that
\begin{align*}
    -\int_0^t \langle \omega_0(s),\partial_s\chi(s)\rangle+\langle \omega(s),\Delta \chi(s)\rangle \,\dd s &=\int_0^t \langle \operatorname{curl}(w_g(s)\otimes w_g(s)),\nabla \chi(s)\rangle \,\dd s.
\end{align*} 
This means that $\omega_0$ is a distributional solution in $(0,T)\times \Dom$ of the partial differential equation
\begin{align*}
    \partial_t \omega_0
    &=\Delta \omega_0-\operatorname{div}\operatorname{curl}(w_g(s)\otimes w_g(s)).
\end{align*}
Let us consider $\psi_0\in C^{\infty}_c((0,T)\times \Dom)$ supported in $[t_1-\eps,t_2+\eps]\times B(x_0,\rho+\eps)$ such that it is equal to one in $[t_1-\eps/2,t_2+\eps/2]\times B(x_0,\rho+\eps/2)$. Let us denote by ${\omega}^*_0=\omega_0 \psi_0\in L^p(0,T; L^{q_0}(\mathbb{R}^2))$ supported in $[t_1-\eps,t_2+\eps]\times B(x_0,\rho+\eps)$, then ${\omega}^*_0$ is a distributional solution in $(0,T)\times \mathbb{R}^2$ of
\begin{align}\label{distributional solution tilde omega 1 auxiliary weak}
    \partial_t {\omega}^*_0&= \Delta {\omega}^*_0+h_0
\end{align}
with \begin{align*}
 h_0 & =\partial_t\psi_0 \omega_0-2\nabla\psi_0\cdot\nabla \omega_0-\Delta \psi_0 \omega_0-\psi_0 w_g\cdot\nabla\omega_w.
\end{align*}
Due to \Cref{corollary inteorior regularity linear}
\begin{align*}
    h_0\in L^p(0,T;H^{-1,q_0}(\mathbb{R}^2))\quad \mathbf{P}-a.s.
\end{align*} Then, again by maximal $L^p$-regularity techniques for the heat equation (see e.g.\ \cite[Theorem 4.4.4]{pruss2016moving} \cite[Theorems 10.2.25 and 10.3.4]{Analysis2}) and the trace embedding of \cite[Theorem 3.4.8]{pruss2016moving}, \begin{align*}
    {\omega}^*_0\in C([0,T];L^{q_0}(\mathbb{R}^2))\cap L^p(0,T;H^{1,q_0}(\mathbb{R}^2)).
\end{align*}
Therefore,\begin{align*}
  \omega_0\in & C([t_1-\eps/4,t_2+\eps/4],L^{q_0}(B(x_0,\rho+\eps/4)))\\ &\cap L^p(t_1-\eps/4,t_2+\eps/4,H^{1,q_0}(B(x_0,\rho+\eps/4)))\quad \mathbf{P}-a.s.
\end{align*} Introducing $\phi_0\in C^{\infty}_c(B(x_0,\rho+\eps/4))$ equal to one in $B(x_0,\rho+\eps/8)$, since $\omega_0=\operatorname{curl }v_0$, then $\phi_0 v_0$ satisfies
\begin{align}\label{elliptic u auxiliary weak}
    \Delta(\phi_0 v_0)=\nabla^{\perp}\omega_0\phi_0+\Delta \phi_0 v_0+2\nabla\phi_0\cdot\nabla v_0 ,\quad (\phi_0 v)|_{\partial  B(x_0,\rho+\eps/4)}=0.  
\end{align}
From the regularity of $\omega_0$, by standard elliptic regularity theory (see for example \cite[Chapter 4]{Tri83}), it follows that $\phi_0 v_0\in C([t_1-\eps/4,t_2+\eps/4];H^{1,q_0}(B(x_0,\rho+\eps/4);\mathbb{R}^2))\cap L^p(t_1-\eps/4,t_2+\eps/4;H^{2,q_0}(B(x_0,\rho+\eps/4);\mathbb{R}^2))\ \mathbf{P}-a.s$. Therefore, since $\phi_0\equiv 1$ on $B(x_0,\rho+\eps/8)$ \begin{align}\label{preliminary interior regularity v auxiliary weak}
    v_0\in  & C([t_1-{\eps}/{16},t_2+{\eps}/{16}];H^{1,q_0}(B(x_0,\rho+{\eps}/{16});\mathbb{R}^2))\notag \\ &\cap L^{p}(t_1-{\eps}/{16},t_2+{\eps}/{16};H^{2,q_0}(B(x_0,\rho+{\eps}/{16});\mathbb{R}^2))\quad \mathbf{P}-a.s.
\end{align}
Reiterating the argument, i.e. considering for each $j\in \N,\ j\geq 0$, first $\psi_j\in C^{\infty}_c((0,T)\times \Dom)$ supported in $[t_1-\eps/2^{4j}, t_2+\eps/2^{4j}]\times B(x_0,\rho+\eps/2^{4j})$ identically equal to one in $[t_1-\eps/2^{4j+1}, t_2+\eps/2^{4j+1}]\times B(x_0,\rho+\eps/2^{4j+1})$ and $\phi_j\in C^{\infty}_c(B(x_0,\rho+\eps/2^{4j+2}))$ identically equal to one in $B(x_0,\rho+\eps/2^{4j+3})$ we get iteratively that $\mathbf{P}-$a.s.
\begin{align*}
\omega_0\in & C([t_1-\eps/2^{4j+2},t_2+\eps/2^{4j+2}],H^{j,q_0}(B(x_0,\rho+\eps/2^{4j+2})))\\ &\cap L^p(t_1-\eps/2^{4j+2},t_2+\eps/2^{4j+2},H^{j+1,q_0}(B(x_0,\rho+\eps/2^{4j+2})))   \\
v_0\in & C([t_1-\eps/2^{4(j+1)},t_2+\eps/2^{4(j+1)}],H^{j+1,q_0}(B(x_0,\rho+\eps/2^{4(j+1)});\mathbb{R}^2))\\ &\cap L^p(t_1-\eps/2^{4(j+1)},t_2+\eps/2^{4(j+1)},H^{j+2,q_0}(B(x_0,\rho+\eps/2^{4(j+1)});\mathbb{R}^2)). 
\end{align*}
and the claimed interior regularity for $v_0$ follows.

\smallskip

\emph{Step 2: Inductive step}.
Assume that we have already shown that the claim holds for $v_j,\ j\in \{0, l-1\},$ and $l\leq N-1$. Now let us prove that it holds also for $v_l$. Since $\mathrm{dist}({B(x_0,\rho)}, \partial \Dom)>0,\ 0<t_1\leq t_2<T$, we can find $\eps$ small enough such that $0<t_1-2\eps<t_1\leq t_2<t_2+2\eps<T,\ \mathrm{dist}({B(x_0,\rho+2\eps)}, \partial \Dom)>0$.
As described in \Cref{lemma weak solution linear equivalence}, arguing as in the proof of \cite[Theorem 7]{flandoli2023stochastic}, we can extend the weak formulation satisfied by $v_l$ to time dependent test functions $\phi\in C^1([0,T]; \mathbb{L}^{q_l'})\cap C([0,T];\Do(A_{q_l'}))$ obtaining that for each $t\in [0,T]$
\begin{align*}
    \langle v_l(t),\phi(t)\rangle& =  \int_0^t \langle v_l(s),\partial_s \phi(s)\rangle \,\dd s- \int_{0}^{t}\left\langle v_0\left(
s\right)  ,A_{q_l'}\phi(s)\right\rangle \,\dd s\\ & +\int_{0}^{t}b(v_{l-1}(s),\phi(s), w(s)+\sum_{j=0}^{l-1} v_j(s))  \,\dd s\\ & +\int_0^t b(w(s)+\sum_{j=0}^{i-2} v_j(s),\phi(s),v_{i-1}(s))\,\dd s \quad \mathbf{P}-a.s.
\end{align*}
Choosing $\phi=-\nabla^{\perp}\chi,\ \chi\in C^{\infty}_c((0,T)\times \Dom)$ in the weak formulation above and, for $i\in\{0,\dots, l-1\}$, denoting by
\begin{align*}
    \omega_l=\operatorname{curl}v_l&\in C([0,T];H^{-1,q_l}(\Dom))\cap L^{p/2^l}(0,T;L^{q_l}(\Dom)), \\
    \omega_i=\operatorname{curl}v_i&\in  C([t_1-2\eps,t_2+2\eps], C^{\infty}({B(x_0,\rho+2\eps)})),\\
    \omega_w=\operatorname{curl}w_g&\in  C([t_1-2\eps,t_2+2\eps], C^{\infty}({B(x_0,\rho+2\eps)}))\quad \mathbf{P}-a.s.
\end{align*} arguing as in \emph{Step 1} it follows that $\omega_l$ is a distributional solution in $(0,T)\times \Dom$ of the partial differential equation
\begin{align*}
    \partial_t \omega_l
    &=\Delta \omega_l-\operatorname{div}\operatorname{curl}(v_{l-1}(s)\otimes v_{l-1}(s))\\ &-\operatorname{div}\operatorname{curl}\left(v_{l-1}(s)\otimes \left(w_g(s)+\sum_{j=0}^{l-2}v_{j}(s)\right)\right)\\ &-\operatorname{div}\operatorname{curl}\left(\left(w_g(s)+\sum_{j=0}^{l-2}v_{j}(s)\right)\otimes v_{l-1}(s)\right).
\end{align*}
Let us consider $\psi_0\in C^{\infty}_c((0,T)\times \Dom)$ supported in $[t_1-\eps,t_2+\eps]\times B(x_0,\rho+\eps)$ such that it is equal to one in $[t_1-\eps/2,t_2+\eps/2]\times B(x_0,\rho+\eps/2)$. Let us denote by ${\omega}^*_l=\omega_l \psi_0\in L^p(0,T; L^{q_l}(\mathbb{R}^2))$ supported in $[t_1-\eps,t_2+\eps]\times B(x_0,\rho+\eps)$, then ${\omega}^*_l$ is a distributional solution in $(0,T)\times \mathbb{R}^2$ of
\begin{align}\label{distributional solution tilde omega 1 auxiliary weak 2}
    \partial_t {\omega}^*_l&= \Delta {\omega}^*_l+h_l
\end{align}
with \begin{align*}
 h_l & =\partial_t\psi_0 \omega_l-2\nabla\psi_0\cdot\nabla \omega_l-\Delta \psi_0 \omega_l-\psi_0 w_{l-1}\cdot\nabla\omega_{l-1}\\ & -\psi_0 w_{l-1}\cdot\nabla\left(\omega_{w}+\sum_{j=0}^{l-2}\omega_j\right)-\psi_0 \left(w_j+\sum_{j=0}^{l-2}v_j\right)\cdot\nabla \omega_{l-1}.
\end{align*}
Due to \Cref{corollary inteorior regularity linear} and the inductive hypothesis
\begin{align*}
    h_l\in L^p(0,T;H^{-1,q_l}(\mathbb{R}^2))\quad \mathbf{P}-a.s.
\end{align*}
Now we can argue as in \emph{Step 1} obtaining the claim. We omit the easy details.
\end{proof}
Now we are in the position to apply similar ideas of \cite[Section 4.2]{AL23_boundary_noise} for the equation satisfied by $\overline{v}$. For the sake of completeness, we provide some details.

\begin{lemma}\label{Lemma uniform boundendness}
    Let \Cref{ass:ass_fractional}, $r\in (2,4)$ and $p\geq 2^N\frac{r}{r-2}$. Let $(v_0,\dots,v_{N-1},\overline{v})$ be the $(p,r)$-solution of \eqref{system PDE} in the sense of \Cref{def: solution system PDE}. Then, for all $0<t_1\leq t_2<T,$ $x_0\in \Dom $, $\rho>0$ such that $\mathrm{dist}({B(x_0,\rho)}, \partial \Dom)>0$,
    \begin{align*}
        \overline{v}\in C([t_1,t_2], H^{3/2}({B(x_0,\rho)};\mathbb{R}^2)) \quad \mathbf{P}-a.s.
    \end{align*}
\end{lemma}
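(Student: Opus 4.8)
The plan is to run the vorticity-plus-localization scheme of \autoref{interior regularity vi}, the decisive new feature being that $\overline{v}$ carries only the finite-energy regularity $C([0,T];H)\cap L^2(0,T;V)$, so the genuinely quadratic term $B(\overline{v},\overline{v})$ cannot be bootstrapped linearly as the $v_i$ were, but must be controlled through the two-dimensional (Ladyzhenskaya) structure. As a preliminary I would record the a priori integrability gained from the energy class, namely $\overline{v}\in L^4(0,T;\mathbb{L}^4)$, obtained by interpolating $L^\infty(0,T;H)$ with $L^2(0,T;V)$ and using $H^{1/2}(\Dom)\hookrightarrow\mathbb{L}^4$; this is exactly the critical two-dimensional Serrin integrability. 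Then, exactly as in \autoref{interior regularity vi}, I would extend the weak formulation \eqref{weak vbar} to time-dependent test functions $\phi=-\nabla^\perp\chi$ and pass to $\overline{\omega}=\operatorname{curl}\overline{v}$, which solves in the sense of distributions on $(0,T)\times\Dom$
$$\partial_t\overline{\omega}+\overline{v}\cdot\nabla\overline{\omega}=\Delta\overline{\omega}-\operatorname{div}\operatorname{curl}\big(\overline{v}\otimes\Tilde{v}+\Tilde{v}\otimes\overline{v}\big)+\operatorname{curl}\overline{f},$$
where $\Tilde{v}=w_g+\sum_{j=0}^{N-1}v_j$ and $\overline{f}$ are \emph{smooth} on a slightly enlarged interior cylinder by \autoref{corollary inteorior regularity linear} and \autoref{interior regularity vi}. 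Here I used that $\operatorname{curl}B(\overline{v},\overline{v})=\overline{v}\cdot\nabla\overline{\omega}$ exactly, the first-order cross terms cancelling because $\overline{v}$ is divergence-free.

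The core is a localized enstrophy estimate. Fixing nested space-time cut-offs, I would test the vorticity equation against $\psi^2\overline{\omega}$ and integrate by parts. The self-transport term yields only $-\tfrac12\int\overline{v}\cdot\nabla(\psi^2)\,\overline{\omega}^2$, which I bound by $\|\overline{v}\|_{\mathbb{L}^2}\|\overline{\omega}\|_{\mathbb{L}^4}^2$ and then, via the two-dimensional inequality $\|\overline{\omega}\|_{\mathbb{L}^4}^2\lesssim\|\overline{\omega}\|_{L^2}\|\overline{\omega}\|_{H^1}$, absorb the gradient into the diffusion and leave the integrable Gronwall weight $\|\overline{v}\|_{\mathbb{L}^2}^2\in L^1(0,T)$; the cut-off commutators and the cross and forcing contributions are harmless since $\Tilde{v},\overline{f}$ and their derivatives are bounded on the support of the cut-offs. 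Since $\overline{\omega}\in L^2(0,T;L^2_{\mathrm{loc}})$ I would start the estimate from an interior time $t_\ast\in(0,t_1)$ at which $\overline{\omega}(t_\ast)\in L^2_{\mathrm{loc}}$, supplying finite initial enstrophy. This yields, on the target cylinder, $\overline{\omega}\in L^\infty(t_1,t_2;L^2)\cap L^2(t_1,t_2;H^1)$, i.e. $\overline{v}\in L^\infty(t_1,t_2;H^1_{\mathrm{loc}})\cap L^2(t_1,t_2;H^2_{\mathrm{loc}})$.

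With this improved class I would close by one application of maximal $L^2$-regularity to the localized vorticity equation. Keeping the nonlinearity in transport form and using $\overline{v}\in L^\infty_t\mathbb{L}^4$ (from $H^1\hookrightarrow\mathbb{L}^4$) together with $\nabla\overline{\omega}\in L^2_tL^2_x$, the term $\overline{v}\cdot\nabla\overline{\omega}$ lies in $L^2(t_1,t_2;L^{4/3})\hookrightarrow L^2(t_1,t_2;H^{-1/2})$ locally, while the cross terms with $\Tilde{v}$ and $\operatorname{curl}\overline{f}$ lie in $L^2(t_1,t_2;L^2)$ by interior smoothness. Maximal $L^2$-regularity for the heat semigroup with forcing in $L^2H^{-1/2}$ and the trace embedding then place the localized vorticity in $C([t_1,t_2];H^{1/2})$, and inverting the Biot--Savart relation $\Delta(\phi\,\overline{v})=\nabla^\perp\overline{\omega}+(\text{l.o.t.})$ by elliptic regularity (as in \eqref{elliptic u auxiliary weak}) upgrades this to $\overline{v}\in C([t_1,t_2];H^{3/2}(B(x_0,\rho);\R^2))$, as claimed.

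I expect the main obstacle to be the quadratic self-interaction $B(\overline{v},\overline{v})$ at the \emph{critical} two-dimensional energy scaling: unlike the terms built from $\Tilde{v}$ and $\overline{f}$, it cannot be absorbed by interior smoothness and is tamed only through the divergence-free cancellation in the transport term together with the Ladyzhenskaya inequality, which lets the diffusion absorb $\|\nabla\overline{\omega}\|_{L^2}$. A secondary but genuine technical point is that $\uin\in H$ gives no control on $\overline{\omega}(0)$, so every localized estimate must be initialized at a positive interior time $t_\ast$ at which the vorticity is square-integrable; this is exactly what confines the conclusion to $[t_1,t_2]$ with $t_1>0$.
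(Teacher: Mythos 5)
Your global scheme---pass to the vorticity, localize with cut-offs, recover the velocity through the elliptic system $\Delta(\phi\,\ov)=\nabla^{\perp}\omega\,\phi+\dots$, then run a second localization whose forcing lands in $L^2(0,T;H^{-1/2}(\R^2))$ so that maximal $L^2$-regularity and the trace embedding give $C_t H^{1/2}$ for the vorticity and hence $C_tH^{3/2}$ for $\ov$---is exactly the paper's (compare \eqref{distributional solution tilde omega 1}, \eqref{elliptic u}, \eqref{distributional solution tilde omega 2}), and your last two stages agree with it in detail. The gap is in your core step, the localized enstrophy estimate. There you test the vorticity equation with $\psi^2\overline{\omega}$, but at that stage $\overline{\omega}=\operatorname{curl}\ov$ is only a \emph{distributional} solution with $\overline{\omega}\in C([0,T];H^{-1})\cap L^2((0,T)\times\Dom)$: the pairing of $\partial_t\overline{\omega}$ with $\psi^2\overline{\omega}$ is not defined, the diffusion term cannot be integrated by parts to produce $-\|\nabla(\psi\overline{\omega})\|_{L^2}^2$ without already knowing $\nabla\overline{\omega}\in L^2_{\mathrm{loc}}$, and the chain rule in time (Lions--Magenes) requires precisely the class $\overline{\omega}\in L^2_t H^1_{\mathrm{loc}}$, $\partial_t\overline{\omega}\in L^2_tH^{-1}_{\mathrm{loc}}$ that you are trying to reach. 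This is not a removable formality at the available regularity: the natural repair, mollifying the equation and passing to the limit, produces the Friedrichs commutator $\rho_\eps*(\ov\,\overline{\omega})-\ov\,\overline{\omega}_\eps$, which from $\ov\in L^4((0,T)\times\Dom)$ and $\overline{\omega}\in L^2$ is small only in $L^{4/3}$; in the estimate it must be paired against $\psi^2\nabla\overline{\omega}_\eps$, which is at best bounded in $L^2$ (and only conditionally, by the very estimate being proven), so it cannot be absorbed by the dissipation. The divergence-free cancellation you invoke is valid for smooth or energy-class vorticity; showing that the distributional vorticity actually lies in the energy class \emph{is} the crux of the lemma.

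The paper resolves exactly this point by invoking the first step of Serrin's argument, \cite[Theorem 13.2]{lemarie2018navier}: since $\omega^{*}=\psi\omega$ is compactly supported in $(0,T)\times\R^2$ and solves \eqref{distributional solution tilde omega 1} with advecting velocities $\ov$ (in the critical class $L^4_{t,x}$, obtained by interpolation as you note) and $\tilde{v}$, and with forcing $h\in L^2(0,T;H^{-1}(\R^2))+L^1(0,T;L^2(\R^2))$, it is identified with the Duhamel heat potential of its right-hand side, and a parabolic bootstrap on this representation---rather than an energy estimate---yields $\omega^{*}\in C([0,T];L^2(\R^2))\cap L^2(0,T;H^1(\R^2))$. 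If you replace your enstrophy estimate by this identification-and-bootstrap step (or by any rigorous argument upgrading distributional solutions of the linear advection--diffusion problem to energy-class ones, e.g.\ a duality/uniqueness argument), the rest of your proof goes through essentially as written. A minor remark: your concern about initializing at a time $t_{*}$ with $\overline{\omega}(t_{*})\in L^2_{\mathrm{loc}}$ is moot once the cut-off is taken in space-time, as in the paper: $\psi$ vanishes near $t=0$, so the localized vorticity has zero initial datum.
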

\begin{proof}
First let us observe that, since $\mathrm{dist}({B(x_0,\rho)}, \partial \Dom)>0,\ 0<t_1\leq t_2<T$, we can find $\eps$ small enough such that $0<t_1-2\eps<t_1\leq t_2<t_2+2\eps<T,\ \mathrm{dist}({B(x_0,\rho+2\eps)}, \partial \Dom)>0$. To simplify the notation let us call
\begin{align*}
    \Tilde{v}&=w+\sum_{j=0}^{N-1} v_j,\ \tilde{\omega}=\operatorname{curl}\tilde{v}.
\end{align*}
As described in \Cref{lemma weak solution linear equivalence}, arguing as in the proof of \cite[Theorem 7]{flandoli2023stochastic}, we can extend the weak formulation satisfied by $\overline{v}$ to time-dependent test functions $\phi\in C^1([0,T]; H)\cap C([0,T];\Do(A))$ obtaining that for each $t\in [0,T]$
\begin{align*}
    \langle \ov(t),\phi(t)\rangle-\langle \uin,\phi(0)\rangle& =  \int_0^t \langle \ov(s),\partial_s \phi(s)\rangle \,\dd s- \int_{0}^{t}\left\langle \ov\left(
s\right)  ,A\phi(s)\right\rangle \,\dd s\\ & +\int_{0}^{t}b\left(
\ov\left(  s\right)  +\tilde{v}\left(  s\right)  ,\phi(s),\ov\left(  s\right)\right)  \,\dd s\\ & +\int_{0}^{t}b\left(
\ov\left(  s\right)   ,\phi(s),\tilde{v}\left(  s\right) \right)  \,\dd s\\ & +\int_{0}^{t}b\left(
v_{N-1}\left(  s\right)   ,\phi(s),\tilde{v}(s) \right)  \,\dd s\\ & +\int_{0}^{t}b\left(
\tilde{v}\left(  s\right)-v_{N-1}\left(  s\right)   ,\phi(s),v_{N-1}\left(  s\right) \right)  \,\dd s \quad \mathbf{P}-a.s.
\end{align*}
Choosing $\phi=-\nabla^{\perp}\chi,\ \chi\in C^{\infty}_c((0,T)\times \Dom)$ in the weak formulation above and, for $i\in \{0,\dots, N-1\},$ denoting by 
\begin{align*}
    \omega=\operatorname{curl}v&\in C([0,T];H^{-1})\cap L^2((0,T)\times \Dom), \\ 
    \omega_i=\operatorname{curl}v_i&\in  C([t_1-2\eps,t_2+2\eps], C^{\infty}({B(x_0,\rho+2\eps)})),\\
    \omega_w=\operatorname{curl}w&\in  C([t_1-2\eps,t_2+2\eps], C^{\infty}({B(x_0,\rho+2\eps)}))\quad \mathbf{P}-a.s.
\end{align*} it follows that \begin{align*}
    -\int_0^t \langle \omega(s),\partial_s\chi(s)\rangle+\langle \omega(s),\Delta \chi(s)\rangle \,\dd s &=\int_0^t \langle \operatorname{curl}(v_{N-1}(s)\otimes \tilde{v}(s)),\nabla \chi(s)\rangle \,\dd s \\ & +\int_0^t \langle \operatorname{curl}(\left(\tilde{v}(s)-v_{N-1}(s)\right)\otimes v_{N-1}(s)),\nabla \chi(s)\rangle \,\dd s\\ &  +\int_0^t\langle \operatorname{curl}(\ov(s)\otimes \tilde{v}(s)),\nabla \chi(s)\rangle  \,\dd s\\ &  +\int_0^t\langle \operatorname{curl}( \tilde{v}(s)\otimes\ov(s)),\nabla \chi(s)\rangle  \,\dd s\\ &+\int_0^t \langle \omega(s), \ov(s)\cdot\nabla \chi(s)\rangle \,\dd s.
\end{align*} 
This means that $\omega$ is a distributional solution in $(0,T)\times \Dom$ of the partial differential equation
\begin{align*}
    \partial_t \omega+\ov\cdot\nabla \omega
    &=\Delta \omega-\operatorname{div}\bigg(\operatorname{curl}(v_{N-1}(s)\otimes \tilde{v}(s))\\
    &+\operatorname{curl}(\tilde{v}(s)-v_{N-1}(s)\otimes v_{N-1}(s))\\
    &+\operatorname{curl}(\tilde{v}(s)\otimes \ov(s))+\operatorname{curl}(\ov(s)\otimes \tilde{v}(s))\bigg).
\end{align*}
Let us consider $\psi\in C^{\infty}_c((0,T)\times \Dom)$ supported in $[t_1-\eps,t_2+\eps]\times B(x_0,\rho+\eps)$ such that it is equal to one in $[t_1-\eps/2,t_2+\eps/2]\times B(x_0,\rho+\eps/2)$. Let us denote by ${\omega}^*=\omega \psi\in L^2((0,T)\times \mathbb{R}^2)$ supported in $[t_1-\eps,t_2+\eps]\times B(x_0,\rho+\eps)$, then ${\omega}^*$ is a distributional solution in $(0,T)\times \mathbb{R}^2$ of
\begin{align}\label{distributional solution tilde omega 1}
    \partial_t {\omega}^*&= \Delta {\omega}^*-\ov\cdot \nabla{\omega}^*-\tilde{v}\cdot\nabla{\omega}^*+h
\end{align}
with \begin{align*}
 h & =\partial_t\psi \omega-2\nabla\psi\cdot\nabla \omega-\Delta \psi \omega+\ov\cdot\nabla\psi \omega+\tilde{v}\cdot\nabla \psi \omega-\psi\left(\tilde{v}-v_{N-1}\right)\cdot\nabla\omega_{N-1}\\ &-\psi \ov\cdot\nabla \tilde{\omega}-\psi v_{N-1}\cdot\nabla \tilde{\omega}.
\end{align*}
Due to \Cref{corollary inteorior regularity linear} and \Cref{interior regularity vi}
the terms \begin{align*}
    &\tilde{v}\cdot\nabla \psi \omega-\psi\left(\tilde{v}-v_{N-1}\right)\cdot\nabla\omega_{N-1}-\psi \ov\cdot\nabla \tilde{\omega}\\ &-\psi v_{N-1}\cdot\nabla \tilde{\omega} \in L^2((0,T)\times \mathbb{R}^2)\quad \mathbf{P}-a.s.
\end{align*}
Therefore $h\in L^2(0,T;H^{-1}(\mathbb{R}^2))+L^1(0,T;L^2(\mathbb{R}^2))\ \mathbf{P}-a.s.$ Then, arguing as in the first step of the proof of \cite[Theorem 13.2]{lemarie2018navier}, the fact that ${\omega}^*$ is a distributional solution of \eqref{distributional solution tilde omega 1} implies that ${\omega}^*\in C([0,T];L^2(\mathbb{R}^2))\cap L^2(0,T;H^1(\mathbb{R}^2)).$
Therefore \begin{align*}
  \omega\in & C([t_1-\eps/4,t_2+\eps/4],L^2(B(x_0,\rho+\eps/4)))\\ &\cap L^2(t_1-\eps/4,t_2+\eps/4,H^1(B(x_0,\rho+\eps/4)))\quad \mathbf{P}-a.s.  
\end{align*} Introducing $\phi\in C^{\infty}_c(B(x_0,\rho+\eps/4))$ equal to one in $B(x_0,\rho+\eps/8)$, since $\omega=\operatorname{curl }\ov$, then $\phi \ov$ satisfies
\begin{align}\label{elliptic u}
    \Delta(\phi \ov)=\nabla^{\perp}\omega\phi+\Delta \phi \ov+2\nabla\phi\cdot\nabla \ov ,\quad (\phi \ov)|_{\partial  B(x_0,\rho+\eps/4)}=0.  
\end{align}
From the regularity of $\omega$, by standard elliptic regularity theory (see for example \cite{ambrosio2019lectures}), it follows that $\phi \ov\in C([t_1-\eps/4,t_2+\eps/4];H^1(B(x_0,\rho+\eps/4);\mathbb{R}^2))\cap L^2(t_1-\eps/4,t_2+\eps/4;H^2(B(x_0,\rho+\eps/4);\mathbb{R}^2))\ \mathbf{P}-a.s$. Therefore, since $\phi\equiv 1$ on $B(x_0,\rho+\eps/8)$ \begin{align}\label{preliminary interior regularity v}
    \ov\in  & C([t_1-{\eps}/{16},t_2+{\eps}/{16}];H^1(B(x_0,\rho+{\eps}/{16});\mathbb{R}^2))\notag \\ &\cap L^2(t_1-{\eps}/{16},t_2+{\eps}/{16};H^2(B(x_0,\rho+{\eps}/{16});\mathbb{R}^2))\quad \mathbf{P}-a.s.
\end{align}
Let us now consider $\hat{\psi}\in C^{\infty}_c((t_1-{\eps}/{16},t_2+{\eps}/{16})\times B(x_0,\rho+{\eps}/{16}))$ such that it is equal to one in $[t_1-{\eps}/{32},t_2+{\eps}/{32}]\times B(x_0,\rho+{\eps}/{32})$. Let us denote by $\hat{\omega}=\omega \hat{\psi}\in C([0,T];L^2(\mathbb{R}^2))\cap L^2(0,T; H^1(\mathbb{R}^2))$ supported in $(t_1-{\eps}/{16},t_2+{\eps}/{16})\times B(x_0,\rho+{\eps}/{16})$, then $\hat{\omega}$ is a distributional solution in $(0,T)\times \mathbb{R}^2$ of
\begin{align}\label{distributional solution tilde omega 2}
    \partial_t \hat{\omega}&= \Delta \hat{\omega}+\hat{h}
\end{align}
with \begin{align*}
 \hat{h} & =-\ov\cdot \nabla\hat{\omega}-\tilde{v}\cdot\nabla\hat{\omega}+\partial_t\hat{\psi} \omega-2\nabla\hat{\psi}\cdot\nabla \omega-\Delta \hat{\psi} \omega+\ov\cdot\nabla\hat{\psi} \omega+\tilde{v}\cdot\nabla \hat{\psi} \omega\\ &-\hat{\psi}\left(\tilde{v}-v_{N-1}\right)\cdot\nabla\omega_{N-1}-\hat{\psi} \ov\cdot\nabla \tilde{\omega}-\hat{\psi} v_{N-1}\cdot\nabla \tilde{\omega}.
\end{align*}
By \Cref{corollary inteorior regularity linear}, \Cref{interior regularity vi} and relation \eqref{preliminary interior regularity v} it follows that \begin{align*}
    \hat{h}\in L^2(0,T;H^{-1/2}(\mathbb{R}^2))\ \mathbf{P}-a.s.
\end{align*}  Therefore $\hat{\omega}\in  C([0,T];H^{1/2}(\mathbb{R}^2))\cap L^2(0,T;H^{3/2}(\mathbb{R}^2)) \ \mathbf{P}-a.s.$ and arguing as above 
\begin{align*}
    \ov\in  & C([t_1-{\eps}/{64},{t}_2+{\eps}/{64}], H^{3/2}({B(x_0,{r}+{\eps}/{64})};\mathbb{R}^2))\\ &\cap L^2({t}_1-{\eps}/{64},{t}_2+{\eps}/{64},  H^{5/2}({B(x_0,{\rho}+{\eps}/{64})};\mathbb{R}^2))\quad \mathbf{P}-a.s.
    \end{align*}
    This concludes the proof of \Cref{Lemma uniform boundendness}.
\end{proof}
\begin{corollary}\label{corollary inteorior regularity nonlinear}
            Let \Cref{ass:ass_fractional}, $r\in (2,4)$ and $p\geq 2^N\frac{r}{r-2}$. Let $(v_0,\dots,v_{N-1},\overline{v})$ be the $(p,r)$-solution of \eqref{system PDE} in the sense of \Cref{def: solution system PDE}.  Then, for all $0<t_1\leq t_2<T,$ $x_0\in \Dom $, $\rho>0$ such that $\mathrm{dist}({B(x_0,\rho)}, \partial \Dom)>0$,
    \begin{align*}
        \ov\in C([t_1,t_2]; C^{\infty}({B(x_0,\rho)};\mathbb{R}^2))\quad \mathbf{P}-a.s.
    \end{align*}
\end{corollary}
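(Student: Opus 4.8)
The plan is to run a purely spatial parabolic bootstrap on the vorticity $\omega=\operatorname{curl}\ov$, taking as starting point the regularity $\ov\in C([t_1,t_2];H^{3/2}(B(x_0,\rho);\R^2))\cap L^2(t_1,t_2;H^{5/2}(B(x_0,\rho);\R^2))$ already established in \autoref{Lemma uniform boundendness}. The decisive structural observation is that, by \autoref{corollary inteorior regularity linear} and \autoref{interior regularity vi}, every field entering the localized vorticity equation \eqref{distributional solution tilde omega 2} other than $\ov$ itself --- namely $\widetilde v=w_g+\sum_{j=0}^{N-1}v_j$, its curl $\widetilde\omega$, the field $v_{N-1}$ and $\omega_{N-1}$ --- is smooth in space and continuous in time on the enlarged cylinder $[t_1-2\eps,t_2+2\eps]\times B(x_0,\rho+2\eps)$. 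Consequently, after cutting off with $\widehat\psi$, equation \eqref{distributional solution tilde omega 2} is a genuine linear heat equation $\partial_t\widehat\omega-\Delta\widehat\omega=\widehat h$ whose inhomogeneity $\widehat h$ contains only the advection $-(\ov+\widetilde v)\cdot\nabla\widehat\omega$, the transport $-\widehat\psi\,\ov\cdot\nabla\widetilde\omega$ of the \emph{smooth} field $\widetilde\omega$, and commutator terms in which $\omega,\nabla\omega$ are multiplied by smooth cut-offs.

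The engine of the bootstrap is the two-dimensional multiplier inequality $\|fg\|_{H^{k-1}}\lesssim\|f\|_{H^{k}}\|g\|_{H^{k-1}}$, valid whenever $k>1$; this is exactly why the threshold $H^{3/2}$ of \autoref{Lemma uniform boundendness} is the right place to begin. I would argue by induction on half-integers $k\geq\tfrac32$, writing $C_tH^{m}_{\loc}$ and $L^2_tH^{m}_{\loc}$ for continuity, resp.\ square-integrability, in time with values in $H^{m}$ on cylinders that shrink geometrically at each step. Assume $\ov\in C_tH^{k}_{\loc}\cap L^2_tH^{k+1}_{\loc}$; then $\omega\in C_tH^{k-1}_{\loc}\cap L^2_tH^{k}_{\loc}$, so $\widehat\omega$ is compactly supported in $C_tH^{k-1}\cap L^2_tH^{k}$. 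Estimating $\widehat h$ in $L^2_tH^{k-1}$ term by term: the advection $(\ov+\widetilde v)\cdot\nabla\widehat\omega$ lies in $L^2_tH^{k-1}$ by the multiplier inequality (using $\ov(t)\in H^{k}$ uniformly in $t$, $\nabla\widehat\omega\in L^2_tH^{k-1}$, and $|k-1|\leq k$); the term $\ov\cdot\nabla\widetilde\omega$ lies even in $L^2_tH^{k}$ because $\widetilde\omega$ is smooth; and the commutator terms lie in $C_tH^{k-1}$. Hence $\widehat h\in L^2_tH^{k-1}$, and $L^2$-maximal regularity for the heat equation (see \cite[Theorem 4.4.4]{pruss2016moving}, \cite[Theorems 10.2.25 and 10.3.4]{Analysis2}) gives $\widehat\omega\in C_tH^{k}\cap L^2_tH^{k+1}$.

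To close the induction I would recover the velocity from the elliptic identity \eqref{elliptic u}, that is $\Delta(\phi\ov)=\nabla^{\perp}\omega\,\phi+\Delta\phi\,\ov+2\nabla\phi\cdot\nabla\ov$ with homogeneous boundary data. By the previous step its right-hand side belongs to $C_tH^{k-1}_{\loc}\cap L^2_tH^{k}_{\loc}$, so standard elliptic regularity upgrades $\ov$ to $C_tH^{k+1}_{\loc}\cap L^2_tH^{k+2}_{\loc}$ on a slightly smaller ball, which is precisely the induction hypothesis with $k$ replaced by $k+1$. Iterating --- and shrinking ball and interval by a harmless geometric amount, since $t_1<t_2$ and $\rho$ are arbitrary --- yields $\ov\in C([t_1,t_2];H^{m}(B(x_0,\rho);\R^2))$ for every $m\in\N$, whence $\bigcap_m H^{m}(B(x_0,\rho))\embed C^{\infty}(B(x_0,\rho))$ gives the stated smoothness. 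As anticipated in \autoref{s:intro}, one obtains only continuity, and no higher regularity, in time.

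The main obstacle is making each iteration genuinely \emph{gain} regularity rather than merely reproduce it: the dangerous term is the quadratic self-advection $\ov\cdot\nabla\widehat\omega$, which naively costs two spatial derivatives of $\ov$. The loop closes only because we stay subcritical --- $k\geq\tfrac32>1$ throughout, so $H^{k}$ acts as a multiplier on $H^{k-1}$ --- and because, by \autoref{corollary inteorior regularity linear} and \autoref{interior regularity vi}, every coefficient except $\ov$ is already smooth and therefore contributes no loss. The only remaining bookkeeping is to track the shrinking cylinders and to use that heat maximal regularity delivers the $C_tH^{k}$ endpoint, so that continuity in time is preserved at every step.
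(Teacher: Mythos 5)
Your proposal is correct and takes essentially the same route as the paper's proof: both localize the vorticity equation by cut-offs, exploit that $\tilde{v}$, $\tilde{\omega}$, $v_{N-1}$, $\omega_{N-1}$ are already smooth on interior cylinders by \autoref{corollary inteorior regularity linear} and \autoref{interior regularity vi}, apply maximal $L^2$-regularity for the heat equation to the cut-off vorticity, recover the velocity through the elliptic identity \eqref{elliptic v}, and iterate on geometrically shrinking cylinders starting from \autoref{Lemma uniform boundendness}. The only difference is presentational: the paper carries out one explicit step and delegates the iteration to Step 3 of \cite[Theorem 13.1]{lemarie2018navier}, whereas you make the bootstrap explicit as an induction on half-integers $k\geq \tfrac{3}{2}$ via the two-dimensional multiplier inequality $\|fg\|_{H^{k-1}}\lesssim \|f\|_{H^{k}}\|g\|_{H^{k-1}}$, $k>1$.
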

\begin{proof}
Since $\mathrm{dist}({B(x_0,\rho)}, \partial \Dom)>0,\ 0<t_1\leq t_2<T$ we can find $\eps$ small enough such that $0<t_1-2\eps<t_1\leq t_2<t_2+2\eps<T,\ \mathrm{dist}({B(x_0,\rho+2\eps)}, \partial \Dom)>0$ and $\psi\in C^{\infty}_c((0,T)\times\Dom)$ supported in $[t_1-\eps,t_2+\eps]\times B(x_0, \rho+\eps)$ such that it is equal to one in $[t_1+\eps/2,t_2+\eps/2]\times B(x_0,\rho+\eps/2)$.
From \Cref{Lemma uniform boundendness} and Sobolev embedding theorem we know that $\ov\in C([t_1-\eps,t_2+\eps];L^{\infty}(B(x_0,{\rho}+\eps);\mathbb{R}^2))\ \mathbf{P}-a.s.$
Denoting, as in \Cref{Lemma uniform boundendness} by
\begin{align*}
    \Tilde{v}&=w+\sum_{j=0}^{N-1} v_j,\ \tilde{\omega}=\operatorname{curl}\tilde{v},\\ 
    \omega=\operatorname{curl}v&\in C([0,T];H^{-1})\cap L^2((0,T)\times \Dom), \\ 
    \omega_i=\operatorname{curl}v_i&\in  C([t_1-2\eps,t_2+2\eps], C^{\infty}({B(x_0,\rho+2\eps)})),\\
    \omega_w=\operatorname{curl}w&\in  C([t_1-2\eps,t_2+2\eps], C^{\infty}({B(x_0,\rho+2\eps)}))\quad \mathbf{P}-a.s.
    \end{align*}
    and $\omega^{*}=\omega \psi\in L^2((0,T)\times \mathbb{R}^2)$ supported in $[t_1-\eps,t_2+\eps]\times B(x_0,\rho+\eps)$, then, arguing as in the proof of \Cref{Lemma uniform boundendness}, it follows that $\omega^{*}$ is a distributional solution in $(0,T)\times B(x_0, \rho+\eps)$ of
\begin{align}\label{distributional solution tilde omega 3}
    \partial_t \omega^{*}&= \Delta \omega^{*}+\tilde{h}
\end{align}
with \begin{align*}
 \tilde{h} & =-\ov\cdot \nabla{\omega}^*-\tilde{v}\cdot\nabla{\omega}^*+\partial_t{\psi} \omega-2\nabla{\psi}\cdot\nabla \omega-\Delta {\psi} \omega+\ov\cdot\nabla{\psi} \omega+\tilde{v}\cdot\nabla {\psi} \omega\\ &-{\psi}\left(\tilde{v}-v_{N-1}\right)\cdot\nabla\omega_{N-1}-{\psi} \ov\cdot\nabla \tilde{\omega}-{\psi} v_{N-1}\cdot\nabla \tilde{\omega}.
\end{align*}
From the regularity of $\omega,\ \ov,\ \tilde\omega,\ \tilde{v},\ \omega_{N-1}, v_{N-1}$, then $\tilde h \in L^2(t_1-\eps,t_2+\eps;H^{-1}(B(x_0,\rho+\eps)))\ \mathbf{P}-a.s$. By standard regularity theory for the heat equation, see for example Step 2 in \cite[Theorem 13.1]{lemarie2018navier}, a solution of \eqref{distributional solution tilde omega 3} with $\tilde h \in L^2(t_1-\eps,t_2+\eps;H^{k-1}(B(x_0,\rho+\eps))),\ k\in\mathbb{N}$, belongs to $C([{t}_1-\eps/2,{t}_2+\eps/2];H^k(B(x_0,{\rho}+\eps/2)))\cap L^2({t}_1-\eps/2,{t}_2+\eps/2;H^{k+1}(B(x_0,{\rho}+\eps/2)))$. Therefore \begin{align*}
     {\omega}^* &\in C([{t}_1-\eps/2,{t}_2+\eps/2];L^2(B(x_0,{\rho}+\eps/2)))\\ &\cap L^2({t}_1-\eps/2,{t}_2+\eps/2;H^{1}(B(x_0,{\rho}+\eps/2)))\quad \mathbf{P}-a.s.
\end{align*} 
which implies 
\begin{align*}
     \omega &\in C([{t}_1+\eps/4,{t}_2-\eps/4;L^2(B(x_0,{\rho}+\eps/4)))\\ &\cap L^2({t}_1-\eps/4,{t}_2+\eps/4;H^{1}(B(x_0,{\rho}+\eps/4)))\quad \mathbf{P}-a.s.
\end{align*} 
since $\psi\equiv 1$ on $({t}_1-\eps/2,{t}_2+\eps/2)\times B(x_0,{\rho}+\eps/2).$ Considering now $\phi\in C^{\infty}_c(\Dom)$ supported on $B(x_0,\rho+\eps/4)$ such that $\phi\equiv 1 $ on $B(x_0,{\rho}+\eps/8)$, since $\operatorname{curl}\ov=\omega$ then 
$\phi \ov$ satisfies 
\begin{align}\label{elliptic v}
    \Delta(\phi \ov)=\nabla^{\perp}\omega \phi+\Delta \phi \ov+2\nabla\phi\cdot\nabla \ov,\quad (\phi \ov)|_{\partial B(x_0,\rho+\eps/4)}=0.  
\end{align}
Since 
\begin{align*}
 \nabla^{\perp}\omega \phi+\Delta \phi \ov+2\nabla\phi\cdot\nabla \ov    &\in C([{t}_1+\eps/4,{t}_2-\eps/4;H^{-1}(B(x_0,{\rho}+\eps/4);\mathbb{R}^2))\\ &\cap L^2({t}_1-\eps/4,{t}_2+\eps/4;L^{2}(B(x_0,{\rho}+\eps/4);\mathbb{R}^2))\quad \mathbf{P}-a.s., 
\end{align*}
by standard elliptic regularity theory (see for example \cite{ambrosio2019lectures}), \begin{align*}
    \phi \ov & \in C([{t}_1+\eps/4,{t}_2-\eps/4;H^1(B(x_0,{\rho}+\eps/4);\mathbb{R}^2))\\ &\cap L^2({t}_1-\eps/4,{t}_2+\eps/4;H^{2}(B(x_0,{\rho}+\eps/4);\mathbb{R}^2))\quad \mathbf{P}-a.s..
\end{align*}
Since $\phi \equiv 1$ on $B(x_0,{\rho}+\eps/8)$ then 
\begin{align*}
\ov& \in C([{t}_1+{\eps}/{16},{t}_2-{\eps}/{16};H^{1}(B(x_0,\rho+{\eps}/{16})))\\ & \cap L^2({t}_1-{\eps}/{16},{t}_2+{\eps}/{16};H^2(B(x_0,{\rho}+{\eps}/{16})))\quad \mathbf{P}-a.s.    
\end{align*}
Reiterating the argument as in Step 3 in \cite[Theorem 13.1]{lemarie2018navier} the claim follows.
\end{proof}

\begin{proof}[Proof of \Cref{t:global}\eqref{it:global2}]

    The claim follows by \Cref{corollary inteorior regularity linear}, \Cref{interior regularity vi} and \Cref{corollary inteorior regularity nonlinear} and a localization argument. To begin, recall  from the proof of \Cref{t:global}\eqref{it:global1} in \Cref{s:uniqueness_q_solution} that there exists a solution \eqref{eq:NS_fractional_noise} on the time interval $[0,T+1]$ and it is given by $\wt{u}=w_g +\sum_{i=0}^{N-1} v_i+\overline{v}$ where $(v_0,\dots,v_{N-1},\overline{v})$ is the $(p,r)$-solution to \eqref{system PDE} on $[0,T+1]$ for $r<2\qstar$, $N$ as in \eqref{definition of N} and $p\geq 2^N \frac{r}{r-2}$.
    Then, by  \Cref{corollary inteorior regularity linear}, \Cref{interior regularity vi}, \Cref{corollary inteorior regularity nonlinear} and a standard covering argument, for all $t_0\in (0,T)$, $\Dom_0\subset \Dom $ such that $\mathrm{dist}({\Dom_0}, \partial \Dom)>0$,
    \begin{align}
    \label{eq:regularity_u_tilde}
        \widetilde{u}\in C([t_0,T];C^{\infty}(\Dom_0;\mathbb{R}^2))\quad \mathbf{P}-a.s.
    \end{align} 
Now, let $u$ be the unique solution \eqref{eq:NS_fractional_noise} provided by  \Cref{t:global}\eqref{it:global1} on $[0,T]$. By uniqueness, we have $u=\widetilde{u}|_{[0,T]}$ and the conclusion follows from \eqref{eq:regularity_u_tilde}.
\end{proof}

\begin{acknowledgements}
The authors thank Robert Denk and Tim Seitz for several useful discussions. Finally, the authors thank the anonymous referees for helpful comments which improved the paper from its initial version.
\end{acknowledgements}

\end{document}